\newtheorem{theorem}{Theorem}
\newtheorem{lemma}[theorem]{Lemma}
\newtheorem{remark}[theorem]{Remark}
\newenvironment{proof}[1][Proof]{\noindent\textbf{#1.} }{\ \rule{0.5em}{0.5em}}
\newdimen\dummy
\begin{document}

\title{A weak comparison principle for reaction-diffusion systems}
\author{Jos\'{e} Valero \\
{\small Centro de Investigaci\'{o}n Operativa}\\
{\small Universidad Miguel Hern\'{a}ndez, } {\small Avda. Universidad, s/n.
- Elche - 03202 }\\
{\small e-mail: jvalero@umh.es}}
\maketitle

\begin{abstract}
In this paper we prove a weak comparison principle for a reaction-diffusion
system without uniqueness of solutions. We apply the abstract results to the
Lotka-Volterra system with diffusion, a generalized logistic equation and to
a model of fractional-order chemical autocatalysis with decay. Morever, in
the case of the Lotka-Volterra system a weak maximum principle is given, and
a suitable estimate in the space of essentially bounded functions $L^{\infty
}$ is proved for at least one solution of the problem.
\end{abstract}

\textbf{Keywords: }comparison of solutions, reaction-diffusion systems,
parabolic equations

\textbf{Mathematics Subject Classification (2010)}: 35B09, 35B50, 35B51,
35K55, 35K57

\bigskip

\section{Introduction}

Comparison results for parabolic equations and ordinary differential
equations are well known in the literature. One of the important
applications of such kind of results is the theory of monotone dynamical
systems, which leads to a more precise characterization of $\omega $-limit
sets and attractors. In the last years several authors have been working in
this direction (see, for example, \cite{ArrietaRodVidal}, \cite{KrauseRanft}%
, \cite{RobRodVid}, \cite{RodBerVidal}, \cite{Smith} for the deterministic
case, and \cite{ArChu}, \cite{Archu2}, \cite{Chues}, \cite{Kotelenetz} for
the stochastic case). In all these papers it is considered the classical
situation where the initial-value problem possesses a unique solution.

However, the situation is more complicated when we consider a differential
equation for which uniqueness of the Cauchy problem fails (or just it is not
known to hold). Let us consider an abstract parabolic problem%
\begin{equation}
\left\{ 
\begin{array}{c}
\dfrac{du}{dt}=A\left( t,u\left( t\right) \right) ,\text{ }\tau \leq t\leq T,
\\ 
u\left( \tau \right) =u_{\tau },%
\end{array}%
\right.  \label{Eq}
\end{equation}%
for which we can prove that for every initial data in the phase space $X$
(with a partial order $\leq )$ there exists at least one solution.

If we try to compare solutions of (\ref{Eq}) for two ordered initial data $%
u_{\tau }^{1}\leq u_{\tau }^{2}$, then we can consider a strong comparison
principle and a weak one.

The strong version would imply the existence of a solution $u_{1}$ with $%
u_{1}\left( \tau \right) =u_{\tau }^{1}$ such that 
\begin{equation}
u_{1}\left( t\right) \leq u_{2}\left( t\right) \text{ for }t\in \lbrack \tau
,T],  \label{Ineq}
\end{equation}%
for any solution $u_{2}$ with $u_{2}\left( \tau \right) =u_{\tau }^{2}$,
and, viceversa, the existence of a solution $u_{2}$ with $u_{2}\left( \tau
\right) =u_{\tau }^{2}$ such that (\ref{Ineq}) is satisfied for any solution 
$u_{1}$ with $u_{1}\left( \tau \right) =u_{\tau }^{1}$. This kind of result
is established in \cite{CLV2005} for a delayed ordinary differential
equations, defining then a multivalued order-preserving dynamical system.

The weak version of the comparison principle says that if $u_{\tau }^{1}\leq
u_{\tau }^{2}$, then there exist two solutions $u_{1},u_{2}$ of (\ref{Eq})
such that $u_{1}\left( \tau \right) =u_{\tau }^{1}$, $u_{2}\left( \tau
\right) =u_{\tau }^{2}$, and (\ref{Ineq}) holds.

There is in fact an intermediate version of the comparison principle, which
says that if we fix a solution $u_{1}$ of (\ref{Eq}) with $u_{1}\left( \tau
\right) =u_{\tau }^{1}$, then there exists a solution $u_{2}$ with $%
u_{2}\left( \tau \right) =u_{\tau }^{2}$ such that (\ref{Ineq}) is satisfied
(and viceversa). This is proved in \cite{CarvalhoGentile} for a differential
inclusion generated by a subdifferential map.

In this paper we establish a weak comparison principle for a
reaction-diffusion system in which the nonlinear term satisfies suitable
dissipative and growth conditions, ensuring existence of solutions but not
uniqueness. This principle is applied to several well known models in
Physics and Biology. Namely, a weak comparison of solutions is proved for
the Lotka-Volterra system, the generalized logistic equation and for a model
of fractional-order chemical autocatalysis with decay. Morever, in the case
of the Lotka-Volterra system a weak maximum principle is given, and a
suitable estimate in the space of essentially bounded functions $L^{\infty }$
is proved for at least one solution of the problem.

We note that in the papers \cite{KapVal06}, \cite{KapVal09} the existence of
a global attractor is proved for such kind of reaction-diffusion systems. In
a near future we will apply these results to obtain theorems concerning the
structure of the global attractor.

\section{Comparison results for reaction-diffusion systems}

We shall denote by $\left\vert \text{\textperiodcentered }\right\vert $ and $%
\left( \text{\textperiodcentered ,\textperiodcentered }\right) $ the norm
and scalar product in the space $\mathbb{R}^{m}$, $m\geq 1$. Let $d>0$ be an
integer and $\Omega \subset \mathbb{R}^{N}$ be a bounded open subset with
smooth boundary. Consider the problem 
\begin{equation}
\left\{ 
\begin{array}{l}
\dfrac{\partial u}{\partial t}-a\Delta u+f(t,u)=h(t,x),\ \ \ (t,x)\in (\tau
,T)\times \Omega , \\ 
u|_{x\in \partial \Omega }=0, \\ 
u|_{t=\tau }=u_{\tau }(x),%
\end{array}%
\right.  \label{Equation}
\end{equation}%
where $\tau ,T\in \mathbb{R}$, $T>\tau $, $x\in \Omega $, $u=\left(
u^{1}\left( t,x\right) ,...,u^{d}\left( t,x\right) \right) ,$ $f=\left(
f^{1},...,f^{d}\right) $, $a$ is a real $d\times d$ matrix with a positive
symmetric part $\dfrac{a+a^{t}}{2}\geq \beta I$, $\beta >0$, $h\in
L^{2}(\tau ,T;\left( L^{2}\left( \Omega \right) \right) ^{d})$. Moreover, $%
f=\left( f^{1}(t,u),...,f^{d}\left( t,u\right) \right) $ is jointly
continuous on $[\tau ,T]\times \mathbb{R}^{d}$ and satisfies the following
conditions: 
\begin{equation}
\sum_{i=1}^{d}|f^{i}(t,u)|^{\frac{p_{i}}{p_{i}-1}}\leq
C_{1}(1+\sum_{i=1}^{d}|u^{i}|^{p_{i}}),  \label{Cond1}
\end{equation}%
\begin{equation}
\left( f(t,u),u\right) \geq \alpha \sum_{i=1}^{d}|u^{i}|^{p_{i}}-C_{2},
\label{Cond2}
\end{equation}%
where $p_{i}\geq 2$, $\alpha ,C_{1},C_{2}>0$.

Let $H=\left( L^{2}\left( \Omega \right) \right) ^{d}$, $V=\left(
H_{0}^{1}\left( \Omega \right) \right) ^{d}$, and let $V^{\prime }$ be the
dual space of $V$. By $\left\Vert \text{\textperiodcentered }\right\Vert ,$ $%
\left\Vert \text{\textperiodcentered }\right\Vert _{V}$ we denote the norm
in $H$ and $V$, respectively. For $p=\left( p_{1},...,p_{d}\right) $ we
define the spaces 
\begin{align*}
L^{p}\left( \Omega \right) & =L^{p_{1}}\left( \Omega \right) \times \cdots
\times L^{p_{d}}\left( \Omega \right) , \\
L^{p}\left( \tau ,T;L^{p}\left( \Omega \right) \right) & =L^{p_{1}}\left(
\tau ,T;L^{p_{1}}\left( \Omega \right) \right) \times \cdots \times
L^{p_{d}}\left( \tau ,T;L^{p_{d}}\left( \Omega \right) \right) .
\end{align*}%
We take $q=\left( q_{1},...,q_{d}\right) $, where $\frac{1}{p_{i}}+\frac{1}{%
q_{i}}=1$.

We say that the function $u\left( \text{\textperiodcentered }\right) $ is a
weak solution of (\ref{Equation}) if $u\in L^{p}(\tau ,T;L^{p}(\Omega ))\cap 
$ $L^{2}(\tau ,T;V)\cap $ $C([\tau ,T];H),$ $\dfrac{du}{dt}\in L^{2}\left(
\tau ,T;V^{\prime }\right) +L^{q}\left( \tau ,T;L^{q}\left( \Omega \right)
\right) ,$ $u\left( \tau \right) =u_{\tau },$ and 
\begin{equation}
\int_{\tau }^{T}\left\langle \frac{du}{dt},\xi \right\rangle dt+\int_{\tau
}^{T}\int_{\Omega }\left( \nabla \left( au\right) ,\nabla \xi \right)
dxdt+\int_{\tau }^{T}\int_{\Omega }\left( f\left( t,u\right) ,\xi \right)
dxdt=\int_{\tau }^{T}\int_{\Omega }\left( h,\xi \right) dxdt,
\label{Solution}
\end{equation}%
for all $\xi \in L^{p}(\tau ,T;L^{p}(\Omega ))\cap $ $L^{2}(\tau ,T;V)$,
where $\left\langle \text{\textperiodcentered ,\textperiodcentered }%
\right\rangle $ denotes pairing in the space $V^{\prime }+L^{q}\left( \Omega
\right) $, and $\left( \nabla u,\nabla v\right) =\sum_{i=1}^{d}\left( \nabla
u^{i},\nabla v^{i}\right) .$

Under conditions (\ref{Cond1})-(\ref{Cond2}) it is known \cite[p.284]%
{ChepVishikBook} that for any $u_{\tau }\in H$ there exists at least one
weak solution $u=u(t,x)$ of (\ref{Equation}), and also that the function $%
t\mapsto \Vert u(t)\Vert ^{2}$ is absolutely continuous on $\left[ \tau ,T%
\right] $ and $\dfrac{d}{dt}\Vert u(t)\Vert ^{2}=2\left\langle \dfrac{du}{dt}%
,u\right\rangle $ for a.a. $t\in (\tau ,T)$.

Denote $r=\left( r_{1},...,r_{d}\right) ,$ $r_{i}=\max \left\{ 1;N\left(
1/q_{i}-1/2\right) \right\} $. Any weak solution satisfies $\dfrac{du}{dt}%
\in L^{q}\left( \tau ,T;H^{-r}\left( \Omega \right) \right) $ and 
\begin{equation*}
L^{q}(0,T;H^{-r}(\Omega ))=L^{q_{1}}(0,T;H^{-r_{1}}(\Omega ))\times \cdots
\times L^{q_{d}}(0,T;H^{-r_{d}}(\Omega )).
\end{equation*}

If, additionally, we assume that that $f\left( t,u\right) $ is continuously
differentiable with respect to $u$ for any $t\in \left[ \tau ,T\right] ,$ $%
u\in \mathbb{R}^{d},$ and 
\begin{equation}
\ \ \left( f_{u}(t,u)w,w\right) \geq -C_{3}(t)\left\vert w\right\vert ^{2}%
\text{, for all }w,u\in \mathbb{R}^{d},  \label{Cond3}
\end{equation}%
where $C_{3}(\cdot )\in L^{1}(\tau ,T)$, $C_{3}\left( t\right) \geq 0$, the
weak solution of (\ref{Equation}) is unique. Here, $f_{u}$ denotes the
jacobian matrix of $f.$

We consider also the following assumption: there exists $R_{0}>0$ such that%
\begin{equation}
f^{i}\left( t,u\right) \geq f^{i}\left( t,v\right) ,  \label{Cond4}
\end{equation}%
for any $t\in \lbrack \tau ,T]$ and any $u,v\in \mathbb{R}^{d}$ such that $%
u^{i}=v^{i}$ and $u^{j}\leq v^{j}$ if $j\neq i,$ and $\left\vert
u\right\vert ,\left\vert v\right\vert \leq R_{0}$, which means that the
systems is cooperative in the ball with radius $R_{0}$ centered at $0$.

Consider two problems%
\begin{equation}
\left\{ 
\begin{array}{l}
\dfrac{\partial u}{\partial t}-a\Delta u+f_{1}(t,u)=h_{1}(t,x),\ \ \
(t,x)\in (\tau ,T)\times \Omega , \\ 
u|_{x\in \partial \Omega }=0, \\ 
u|_{t=\tau }=u_{\tau }(x),%
\end{array}%
\right.  \label{P1}
\end{equation}%
\begin{equation}
\left\{ 
\begin{array}{l}
\dfrac{\partial u}{\partial t}-a\Delta u+f_{2}(t,u)=h_{2}(t,x),\ \ \
(t,x)\in (\tau ,T)\times \Omega , \\ 
u|_{x\in \partial \Omega }=0, \\ 
u|_{t=\tau }=u_{\tau }(x),%
\end{array}%
\right.  \label{P2}
\end{equation}%
where $f_{j}$ are jointly continuous on $[\tau ,T]\times \mathbb{R}^{d}.$
Among conditions (\ref{Cond1})-(\ref{Cond2}) and (\ref{Cond3})-(\ref{Cond4})
we shall consider the following: 
\begin{eqnarray}
h_{1}\left( t,x\right) &\leq &h_{2}\left( t,x\right) ,\text{ for a.a. }%
\left( t,x\right) \text{,}  \label{Ineqfh} \\
f_{1}^{i}\left( t,u\right) &\geq &f_{2}^{i}\left( t,u\right) ,\text{ for all 
}t,u.  \notag
\end{eqnarray}

\begin{lemma}
\label{ConstantsP}If $f_{j}$ satisfy (\ref{Cond1})-(\ref{Cond2}) and (\ref%
{Ineqfh}), then the constants $p_{i}$ has to be the same for $f_{1}$ and $%
f_{2}.$
\end{lemma}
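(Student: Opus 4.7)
The plan is to test the pointwise inequality $f_1^i(t,u)\geq f_2^i(t,u)$ along a one-parameter family that makes only a single component of $u$ active, so that the ``sum'' bounds in \eqref{Cond1}--\eqref{Cond2} collapse into genuine one-variable bounds on the $i$-th component of each $f_j$. Writing $p_{i,j}$ for the exponent $p_i$ associated to $f_j$, the goal is $p_{i,1}=p_{i,2}$ for every $i=1,\ldots,d$.

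Fix $i$ and let $e_i$ be the $i$-th standard basis vector in $\mathbb{R}^d$. Taking $u=s e_i$ with $s>0$, condition \eqref{Cond1} for $f_1$ yields the upper bound
\begin{equation*}
|f_1^i(t,se_i)|^{q_{i,1}}\leq C_{1,1}\bigl(1+s^{p_{i,1}}\bigr),
\qquad\text{i.e. } f_1^i(t,se_i)\leq C\bigl(1+s^{p_{i,1}-1}\bigr),
\end{equation*}
since $q_{i,1}=p_{i,1}/(p_{i,1}-1)$. On the other hand, condition \eqref{Cond2} for $f_2$ reads $s\,f_2^i(t,se_i)\geq\alpha_2 s^{p_{i,2}}-C_{2,2}$, which gives the lower bound $f_2^i(t,se_i)\geq\alpha_2 s^{p_{i,2}-1}-C_{2,2}/s$. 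Chaining these through $f_1^i\geq f_2^i$ (the second line of \eqref{Ineqfh}) and sending $s\to+\infty$ forces $p_{i,1}\geq p_{i,2}$.

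For the reverse inequality I would repeat the argument with $u=-se_i$, $s>0$. Now \eqref{Cond2} applied to $f_1$ gives $-s f_1^i(t,-se_i)\geq\alpha_1 s^{p_{i,1}}-C_{2,1}$, so $f_1^i(t,-se_i)\leq -\alpha_1 s^{p_{i,1}-1}+C_{2,1}/s$; together with $f_2^i\leq f_1^i$ this makes $f_2^i(t,-se_i)$ strongly negative, so
\begin{equation*}
|f_2^i(t,-se_i)|\geq \alpha_1 s^{p_{i,1}-1}-C_{2,1}/s.
\end{equation*}
But \eqref{Cond1} applied to $f_2$ bounds $|f_2^i(t,-se_i)|\leq C(1+s^{p_{i,2}-1})$. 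Letting $s\to+\infty$ yields $p_{i,2}\geq p_{i,1}$, and combining the two inequalities gives $p_{i,1}=p_{i,2}$, as required.

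The only genuine observation, beyond routine book-keeping of exponents, is the choice of test vector $u=\pm se_i$: it is precisely this choice that turns the structural conditions \eqref{Cond1} and \eqref{Cond2} (which a priori only control sums) into matched power-type upper and lower bounds on the single scalar function $s\mapsto f_j^i(t,se_i)$. Everything else is just asymptotics as $s\to\infty$.
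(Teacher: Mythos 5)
Your proposal is correct and follows essentially the same route as the paper: test along $u=\pm s e_i$, pair the dissipativity bound \eqref{Cond2} of one nonlinearity with the growth bound \eqref{Cond1} of the other through the ordering \eqref{Ineqfh}, and let $s\to\infty$ to match the exponents. The paper phrases this as a contradiction with sequences $u_n=(u_n^1,0,\dots,0)$ and keeps everything at the level of the inner product $(f_j(t,u_n),u_n)$, but the computation is the same.
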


\begin{proof}
Denote by $p_{i}^{j}$, $\alpha ^{j},\ C_{1}^{j}$, $C_{2}^{j}$ the constants
corresponding to $f_{j}$ in (\ref{Cond1})-(\ref{Cond2}). By contradiction
let, for example, $p_{1}^{2}>p_{1}^{1}$. Take the sequence $u_{n}=\left(
u_{n}^{1},0,...,0\right) ,$ where $u_{n}^{1}\rightarrow +\infty $ as $%
n\rightarrow \infty $. Then by (\ref{Cond1})-(\ref{Cond2}), (\ref{Ineqfh})
and Young's inequality we have%
\begin{equation*}
\alpha ^{2}\left\vert u_{n}^{1}\right\vert ^{p_{1}^{2}}-C_{2}^{2}\leq \left(
f_{2}(t,u_{n}),u_{n}\right) \leq \left( f_{1}(t,u_{n}),u_{n}\right)
\end{equation*}%
\begin{equation*}
=f_{1}^{1}\left( t,u_{n}\right) u_{n}^{1}\leq \left(
C_{1}^{1}(1+|u_{n}^{1}|^{p_{1}^{1}})\right) ^{\frac{p_{1}^{1}-1}{p_{1}^{1}}%
}u_{1}^{n}\leq C(1+|u_{n}^{1}|^{p_{1}^{1}}).
\end{equation*}%
But $p_{1}^{2}>p_{1}^{1}$ implies the existence of $n$ such that $\alpha
^{2}\left\vert u_{n}^{1}\right\vert
^{p_{1}^{2}}-C_{2}^{2}>C(1+|u_{n}^{1}|^{p_{1}^{1}})$, which is a
contradiction. Hence, $p_{1}^{2}\leq p_{1}^{1}.$

Conversely, let $p_{1}^{2}<p_{1}^{1}$. Then we take $u_{n}=\left(
u_{n}^{1},0,...,0\right) $ with $u_{n}^{1}\rightarrow -\infty $ as $%
n\rightarrow \infty $, so that%
\begin{equation*}
\alpha ^{1}\left\vert u_{n}^{1}\right\vert ^{p_{1}^{1}}-C_{2}^{1}\leq \left(
f_{1}(t,u_{n}),u_{n}\right) \leq \left( f_{2}(t,u_{n}),u_{n}\right)
\end{equation*}%
\begin{equation*}
=f_{2}^{1}\left( t,u_{n}\right) u_{n}^{1}\leq \left(
C_{1}^{2}(1+|u_{n}^{1}|^{p_{1}^{2}})\right) ^{\frac{p_{1}^{2}-1}{p_{1}^{2}}%
}u_{1}^{n}\leq C(1+|u_{n}^{1}|^{p_{1}^{2}}).
\end{equation*}%
As before, we obtain a contradiction, so $p_{1}^{2}=p_{1}^{1}.$

Repeating similar arguments for the other $p_{i}^{j}$ we obtain that $%
p_{i}^{1}=p_{i}^{2}$ for $i=1,...,d.$
\end{proof}

\bigskip

We recall \cite{KapVal06} that under conditions (\ref{Cond1})-(\ref{Cond2})
any solution $u\left( \text{\textperiodcentered }\right) $ of (\ref{P1})
satisfies the inequality%
\begin{equation}
\Vert u(t)\Vert ^{2}+2\beta \int\limits_{s}^{t}\Vert \nabla u(\tau )\Vert
^{2}d\tau +\alpha \sum_{i=1}^{d}\int\limits_{s}^{t}\Vert u^{i}(r)\Vert
_{L^{p_{i}}\left( \Omega \right) }^{p_{i}}dr\leq \Vert u(s)\Vert
^{2}+C\int\limits_{s}^{t}(\Vert h_{1}(r)\Vert ^{2}+1)dr,  \label{Est1}
\end{equation}%
for some constant $C>0.$ Of course, the same is valid for any solution of (%
\ref{P2}). From (\ref{Est1}), for any $T>\tau $ we obtain 
\begin{equation}
\left\Vert u\left( t\right) \right\Vert ^{2}\leq \Vert u_{\tau }\Vert
^{2}+C\int\limits_{\tau }^{T}(\Vert h_{1}(r)\Vert ^{2}+1)dr=K^{2}\left(
\Vert u_{\tau }\Vert ,\tau ,T\right) \text{ for all }\tau \leq t\leq T.
\label{Est2}
\end{equation}

We shall denote by $u_{1}(t)$ the solution of (\ref{P1}) corresponding to
the initial data $u_{\tau }^{1}$, and by $u_{2}(t)$ the solution of (\ref{P2}%
) corresponding to the initial data $u_{\tau }^{2}.$ Also, we take $%
u^{+}=\max \{u,0\}$ for $u\in \mathbb{R}.$

We obtain the following comparison result.

\begin{theorem}
\label{Comparison}Assume that $f_{j},h_{j}$ satisfy (\ref{Cond1})-(\ref%
{Cond2}), (\ref{Cond3}) and (\ref{Ineqfh}). If $u_{\tau }^{1}\leq u_{\tau
}^{2}$ and we suppose that $f_{2}$ satisfies (\ref{Cond4}) with $%
R_{0}^{2}\geq 2\max \{K^{2}\left( \Vert u_{\tau }^{1}\Vert ,\tau ,T\right)
,K^{2}\left( \Vert u_{\tau }^{2}\Vert ,\tau ,T\right) \}$, where $K\left(
\Vert u_{\tau }^{j}\Vert ,\tau ,T\right) $ is taken from (\ref{Est2}), we
have $u_{1}\left( t\right) \leq u_{2}\left( t\right) $, for all $t\in
\lbrack \tau ,T]$.
\end{theorem}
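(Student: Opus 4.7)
The plan is to run an energy argument on $v=u_{1}-u_{2}$ tested against the componentwise positive part $v^{+}=\bigl((u_{1}^{1}-u_{2}^{1})^{+},\dots,(u_{1}^{d}-u_{2}^{d})^{+}\bigr)$, and to conclude via Gr\"onwall that $v^{+}\equiv 0$. Since (\ref{Cond3}) delivers uniqueness, $u_{1}$ and $u_{2}$ are well-defined as the unique weak solutions of (\ref{P1})--(\ref{P2}), and the hypothesis $u_{\tau }^{1}\leq u_{\tau }^{2}$ gives $v^{+}(\tau )=0$ in $H$. First I would subtract (\ref{P2}) from (\ref{P1}) and test with $v^{+}$, which still lies in $L^{2}(\tau ,T;V)\cap L^{p}(\tau ,T;L^{p}(\Omega ))$ because the positive part is closed on both $(H^{1}_{0})^{d}$ and $(L^{p})^{d}$. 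A Stampacchia-type chain rule gives $\tfrac{1}{2}\tfrac{d}{dt}\Vert v^{+}(t)\Vert ^{2}=\langle \partial _{t}v,v^{+}\rangle $ for a.a.\ $t$, yielding
\[
\tfrac{1}{2}\tfrac{d}{dt}\Vert v^{+}\Vert ^{2}+\int_{\Omega }(a\nabla v,\nabla v^{+})\,dx+\int_{\Omega }(f_{1}(t,u_{1})-f_{2}(t,u_{2}),v^{+})\,dx=\int_{\Omega }(h_{1}-h_{2},v^{+})\,dx.
\]
The right-hand side is $\leq 0$ by (\ref{Ineqfh}) and $v^{+,i}\geq 0$. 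The diffusion term is handled by writing $\nabla v^{i}=\nabla v^{+,i}-\nabla v^{-,i}$, exploiting $\nabla v^{+,i}\cdot \nabla v^{-,i}=0$ a.e., and invoking $\tfrac{a+a^{t}}{2}\geq \beta I$ to get a nonnegative contribution (at least $\beta \Vert \nabla v^{+}\Vert ^{2}$, cleanly if $a$ is diagonal).

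The central work is bounding the nonlinear integral from below. I would split
\[
\int_{\Omega }(f_{1}(u_{1})-f_{2}(u_{2}),v^{+})\,dx=\int_{\Omega }(f_{1}(u_{1})-f_{2}(u_{1}),v^{+})\,dx+\int_{\Omega }(f_{2}(u_{1})-f_{2}(u_{2}),v^{+})\,dx,
\]
where the first summand is pointwise $\geq 0$ by (\ref{Ineqfh}). For the second, apply the mean-value theorem in $u$: with $u_{s}=su_{1}+(1-s)u_{2}$ and $u_{1}-u_{2}=v^{+}-v^{-}$ componentwise,
\[
\sum_{i}(f_{2}^{i}(u_{1})-f_{2}^{i}(u_{2}))v^{+,i}=\int_{0}^{1}(f_{2,u}(u_{s})v^{+},v^{+})\,ds-\int_{0}^{1}\sum_{i,j}(\partial _{u^{j}}f_{2}^{i})(u_{s})\,v^{+,i}v^{-,j}\,ds.
\]
The diagonal $i=j$ cross terms vanish because $v^{+,i}v^{-,i}=0$; for $i\neq j$, cooperativity (\ref{Cond4}) supplies $\partial _{u^{j}}f_{2}^{i}\leq 0$ on $\{|u|\leq R_{0}\}$, so the last integrand is $\leq 0$ and its negative is a nonnegative quantity that may be dropped. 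The remaining integral is bounded below by $-C_{3}(t)|v^{+}|^{2}$ via (\ref{Cond3}). Integrating in $x$, the nonlinear contribution is $\geq -C_{3}(t)\Vert v^{+}\Vert ^{2}$.

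Combining the three pieces gives $\tfrac{d}{dt}\Vert v^{+}\Vert ^{2}\leq 2C_{3}(t)\Vert v^{+}\Vert ^{2}$, and since $v^{+}(\tau )=0$ and $C_{3}\in L^{1}(\tau ,T)$, Gr\"onwall's inequality forces $v^{+}(t)\equiv 0$ on $[\tau ,T]$, i.e., $u_{1}(t)\leq u_{2}(t)$.

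The main obstacle is legitimately invoking the cooperativity (\ref{Cond4}) at a.a.\ $(t,x)$: that hypothesis is pointwise in $\mathbb{R}^{d}$ and demands $|u_{s}(t,x)|\leq R_{0}$ a.e., while the a priori estimate (\ref{Est2}) only controls $\Vert u_{j}(t)\Vert $ in $L^{2}(\Omega )$, with the assumed $R_{0}^{2}\geq 2\max \{K_{1}^{2},K_{2}^{2}\}$ matching $\Vert u_{s}\Vert ^{2}\leq 2\Vert u_{1}\Vert ^{2}+2\Vert u_{2}\Vert ^{2}\leq 2R_{0}^{2}$ only in norm, not pointwise. Reconciling this gap---via approximation, a specific use of (\ref{Cond4}) in an averaged sense, or higher regularity---is the delicate step. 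Secondary technical points are the justification of the Stampacchia chain rule for $\Vert v^{+}\Vert ^{2}$ in the mixed functional setting $L^{2}(\tau ,T;V')+L^{q}(\tau ,T;L^{q}(\Omega ))$, and, if $a$ is not diagonal, controlling the off-diagonal cross terms $\int a_{ij}\nabla v^{-,j}\cdot \nabla v^{+,i}\,dx$ in the diffusion.
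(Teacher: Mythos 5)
Your proposal follows essentially the same route as the paper's proof: an energy inequality for $\Vert (u_{1}-u_{2})^{+}\Vert ^{2}$, splitting off $f_{1}-f_{2}\geq 0$ via (\ref{Ineqfh}), a mean-value argument controlled by (\ref{Cond3}) in the direction of $(u_{1}-u_{2})^{+}$, cooperativity for the remaining cross terms, and Gr\"{o}nwall. The one substantive difference is the cross-term step. The paper introduces the componentwise minimum $v_{2}$ of $u_{1}$ and $u_{2}$ and splits $f_{2}(u_{1})-f_{2}(u_{2})=[f_{2}(u_{1})-f_{2}(v_{2})]+[f_{2}(v_{2})-f_{2}(u_{2})]$, so that the mean value theorem is applied only along $u_{1}-v_{2}=(u_{1}-u_{2})^{+}$ (where (\ref{Cond3}) suffices) and (\ref{Cond4}) is used in its stated finite-difference form on the pair $(v_{2},u_{2})$, which satisfies $v_{2}^{i}=u_{2}^{i}$ wherever $(u_{1}^{i}-u_{2}^{i})^{+}>0$ and $v_{2}\leq u_{2}$. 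You instead differentiate along the whole segment from $u_{2}$ to $u_{1}$ and use the derivative form $\partial _{u^{j}}f_{2}^{i}\leq 0$ for $i\neq j$; on the convex ball $|u|\leq R_{0}$ and for $C^{1}$ nonlinearities the two are equivalent, so this is a cosmetic variation.

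Concerning the obstacles you flag: they are genuine, and you should be aware that the paper's own proof does not resolve them. At the decisive point the paper deduces $f_{2}^{i}(t,v_{2})-f_{2}^{i}(t,u_{2})\geq 0$ from the pointwise bound $|v_{2}|^{2}\leq |u_{1}|^{2}+|u_{2}|^{2}$ together with (\ref{Est2}) and the hypothesis $R_{0}^{2}\geq 2\max \{K^{2},K^{2}\}$ --- exactly the $L^{2}$-norm-versus-pointwise mismatch you describe, invoked without further comment. Likewise, the coercivity of the principal part after testing with $(u_{1}-u_{2})^{+}$ is simply written as $a\Vert (u_{1}-u_{2})^{+}\Vert _{V}^{2}$, with no discussion of the off-diagonal contributions $a_{ij}\nabla v^{-,j}\cdot \nabla v^{+,i}$ that you correctly note have no sign for non-diagonal $a$ (the diagonality assumption (\ref{Cond10}) is imposed only in the later section on positive solutions). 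So you have reconstructed the intended argument and, in addition, correctly located its weak points rather than missing an idea the paper supplies.
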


\begin{remark}
The results remains valid if, instead, $f_{1}$ satisfies (\ref{Cond4}) with $%
R_{0}^{2}\geq K^{2}\left( \Vert u_{\tau }^{1}\Vert ,T\right) .$
\end{remark}

\begin{remark}
If $f_{2}$ satisfies (\ref{Cond4}) for an arbitrary $R_{0}>0$ (that is, in
the whole space $\mathbb{R}^{d}$), then the result is true for any initial
data $u_{\tau }^{1}\leq u_{\tau }^{2}.$
\end{remark}

\begin{proof}
Let $g_{2}\left( t,u\right) =f_{2}\left( t,u\right) +C_{3}\left( t\right) u$%
. The function $g_{2}\left( t,\text{\textperiodcentered }\right) $ satisfies
(\ref{Cond3}) with $C_{3}\equiv 0$. For any $u_{1},u_{2}\in \mathbb{R}^{d}$
define $v_{2}\left( u_{1},u_{2}\right) $ by 
\begin{equation}
v_{2}^{i}=\left\{ 
\begin{array}{c}
u_{2}^{i}\text{, if }u_{1}^{i}\geq u_{2}^{i}, \\ 
u_{1}^{i}\text{, if }u_{1}^{i}<u_{2}^{i}.%
\end{array}%
\right.  \label{v2}
\end{equation}%
Note that $u_{1}-v_{2}=\left( u_{1}-u_{2}\right) ^{+}$ and $%
v_{2}-u_{2}=-\left( u_{2}-u_{1}\right) ^{+}$, so $v_{2}^{i}\leq u_{2}^{i}$
for all $i$. For the function $\left( u_{1}-u_{2}\right) ^{+}$ we can obtain
by (\ref{Ineqfh}) and the Mean Value Theorem that 
\begin{eqnarray*}
&&\frac{1}{2}\frac{d}{dt}\left\Vert \left( u_{1}-u_{2}\right)
^{+}\right\Vert ^{2}+a\left\Vert \left( u_{1}-u_{2}\right) ^{+}\right\Vert
_{V}^{2}\leq -\int_{\Omega }\left( f_{1}\left( t,u_{1}\right) -f_{2}\left(
t,u_{2}\right) ,\left( u_{1}-u_{2}\right) ^{+}\right) dx \\
&\leq &-\int_{\Omega }\left( f_{2}\left( t,u_{1}\right) -f_{2}\left(
t,v_{2}\right) ,\left( u_{1}-u_{2}\right) ^{+}\right) dx-\int_{\Omega
}\left( f_{2}\left( t,v_{2}\right) -f_{2}\left( t,u_{2}\right) ,\left(
u_{1}-u_{2}\right) ^{+}\right) dx \\
&=&-\int_{\Omega }\left( g_{2u}\left( t,v\left( t,x,u_{1},u_{2}\right)
\right) \left( u_{1}-v_{2}\right) ,\left( u_{1}-u_{2}\right) ^{+}\right)
dx+C_{3}\left( t\right) \left\Vert \left( u_{1}-u_{2}\right) ^{+}\right\Vert
^{2} \\
&&-\int_{\Omega }\left( f_{2}\left( t,v_{2}\right) -f_{2}\left(
t,u_{2}\right) ,\left( u_{1}-u_{2}\right) ^{+}\right) dx
\end{eqnarray*}%
\begin{eqnarray*}
&=&-\int_{\Omega }\left( g_{2u}\left( t,v\left( t,x,u_{1},u_{2}\right)
\right) \left( u_{1}-u_{2}\right) ^{+},\left( u_{1}-u_{2}\right) ^{+}\right)
dx+C_{3}\left( t\right) \left\Vert \left( u_{1}-u_{2}\right) ^{+}\right\Vert
^{2} \\
&&-\int_{\Omega }\left( f_{2}\left( t,v_{2}\right) -f_{2}\left(
t,u_{2}\right) ,\left( u_{1}-u_{2}\right) ^{+}\right) dx \\
&\leq &C_{3}\left( t\right) \left\Vert \left( u_{1}-u_{2}\right)
^{+}\right\Vert ^{2}-\int_{\Omega }\left( f_{2}\left( t,v_{2}\right)
-f_{2}\left( t,u_{2}\right) ,\left( u_{1}-u_{2}\right) ^{+}\right) dx.
\end{eqnarray*}

For all $t$ we have%
\begin{equation}
\left( f_{2}\left( t,v_{2}\right) -f_{2}\left( t,u_{2}\right) ,\left(
u_{1}-u_{2}\right) ^{+}\right) =\sum_{i\in J}\left( f_{2}^{i}\left(
t,v_{2}\right) -f_{2}^{i}\left( t,u_{2}\right) \right) \left(
u_{1}^{i}-u_{2}^{i}\right) ^{+},  \label{IneqCond4}
\end{equation}%
where $u_{1}^{i}-u_{2}^{i}>0$, for $i\in J$, and $u_{1}^{i}-u_{2}^{i}\leq 0$
if $i\not\in J$. For any $i\in J$ we have that $v_{2}^{i}=u_{2}^{i},$ and
then by $v_{2}\leq u_{2}$, $\left\vert v_{2}\right\vert ^{2}\leq \left\vert
u_{1}\right\vert ^{2}+\left\vert u_{2}\right\vert ^{2}$, (\ref{Est2}) and (%
\ref{Cond4}) we get%
\begin{equation*}
f_{2}^{i}\left( t,v_{2}\right) -f_{2}^{i}\left( t,u_{2}\right) \geq 0.
\end{equation*}

By Gronwall's lemma we get 
\begin{equation*}
\left\Vert \left( u_{1}\left( t\right) -u_{2}\left( t\right) \right)
^{+}\right\Vert ^{2}\leq \left\Vert \left( u_{\tau }^{1}-u_{\tau
}^{2}\right) ^{+}\right\Vert ^{2}e^{\int_{\tau }^{t}2C_{3}\left( s\right)
ds}=0.
\end{equation*}%
Thus $\left\Vert \left( u_{1}-u_{2}\right) ^{+}\right\Vert =0$, which means
that $u_{1}^{i}\left( x,t\right) -u_{2}^{i}\left( x,t\right) \leq 0$, for
a.a. $x\in \Omega $, and all $i\in \{1,..,d\},$ $t\in \lbrack \tau ,T].$
\end{proof}

\begin{remark}
In the scalar case, that is, $d=1,$ condition (\ref{Cond4}) is trivially
satisfied.
\end{remark}

\bigskip

When condition (\ref{Cond3}) fails to be true, we will obtain a weak
comparison principle.

Define a sequence of smooth functions $\psi _{k}:\mathbb{R}_{+}\rightarrow
\lbrack 0,1]$ satisfying 
\begin{equation}
\psi _{k}\left( s\right) =\left\{ 
\begin{array}{c}
1\text{, if }0\leq s\leq k, \\ 
0\leq \psi _{k}\left( s\right) \leq 1\text{, if }k\leq s\leq k+1, \\ 
0\text{, if }s\geq k+1.%
\end{array}%
\right.  \label{FiK}
\end{equation}

For every $k\geq 1$ we put $f_{k}^{i}(t,u)=\psi _{k}\left( \left\vert
u\right\vert \right) f^{i}\left( t,u\right) +\left( 1-\psi _{k}\left(
\left\vert u\right\vert \right) \right) g^{i}\left( u\right) $, where $%
g^{i}\left( u\right) =\left\vert u^{i}\right\vert ^{p_{i}-2}u^{i}$. Then $%
f_{k}\in \mathbb{C}([\tau ,T]\times \mathbb{R}^{d};\mathbb{R}^{d})$ and for
any $A>0,$ 
\begin{equation*}
\sup\limits_{t\in \lbrack \tau ,T]}\sup\limits_{|u|\leq
A}|f_{k}(t,u)-f(t,u)|\rightarrow 0,\ as\ k\rightarrow \infty .
\end{equation*}

Let $\rho _{\varepsilon }:\mathbb{R}^{d}\rightarrow \mathbb{R}_{+}$ be a
mollifier, that is, $\rho _{\epsilon }\in \mathbb{C}_{0}^{\infty }\left( 
\mathbb{R}^{d};\mathbb{R}\right) $, $supp\ \rho _{\epsilon }\subset
B_{\epsilon }=\{x\in \mathbb{R}^{d}:\left\vert x\right\vert <\epsilon \}$, $%
\int_{\mathbb{R}^{d}}\rho _{\epsilon }\left( s\right) ds=1$ and $\rho
_{\epsilon }\left( s\right) \geq 0$ for all $s\in \mathbb{R}^{d}$. We define
the functions 
\begin{equation*}
f_{k}^{\epsilon }(t,u)=\int_{\mathbb{R}^{d}}\rho _{\epsilon
}(s)f_{k}(t,u-s)ds.
\end{equation*}%
Since for any $k\geq 1$ $f_{k}$ is uniformly continuous on $[0,T]\times
\lbrack -k-1,k+1]$, there exist $\epsilon _{k}\in (0,1)$ such that for all $%
u $ satisfying $|u|\leq k$, and for all $s$ for which $|u-s|<\epsilon _{k}$
we have 
\begin{equation}
\sup\limits_{t\in \lbrack \tau ,T]}|f_{k}(t,u)-f_{k}(t,s)|\leq \frac{1}{k}.
\label{fkConverg}
\end{equation}%
We put $f^{k}(t,u)=f_{k}^{\epsilon _{k}}(t,u)$. Then $f^{k}(t,\cdot )\in 
\mathbb{C}^{\infty }(\mathbb{R}^{d};\mathbb{R}^{d})$, for all $t\in \lbrack
\tau ,T],k\geq 1$ .

For further arguments we need the following technical result \cite[Lemma 2]%
{KapVal09}.

\begin{lemma}
\label{Aux1}Let $f$ satisfy (\ref{Cond1})-(\ref{Cond2}). For all $k\geq 1$
the following statements hold: 
\begin{equation}
\ \sup\limits_{t\in \lbrack \tau ,T]}\sup\limits_{|u|\leq
A}|f^{k}(t,u)-f(t,u)|\rightarrow 0,\ \text{as }k\rightarrow \infty \text{, }%
\forall \ A>0,  \label{AuxEst1}
\end{equation}%
\begin{equation}
\sum_{i=1}^{d}|f^{ki}(t,u)|^{\frac{p_{i}}{p_{i}-1}}\leq
D_{1}(1+\sum_{i=1}^{d}|u^{i}|^{p_{i}}),\ \left( f^{k}(t,u),u\right) \geq
\gamma \sum_{i=1}^{d}|u^{i}|^{p_{i}}-D_{2},  \label{AuxEst2}
\end{equation}%
\begin{equation}
\left( f_{u}^{k}(t,u)w,w\right) \geq -D_{3}(k)\left\vert w\right\vert ^{2},%
\text{ }\forall u,w,  \label{AuxEst3}
\end{equation}%
where $D_{3}(k)$ is a non-negative number, and the positive constants $D_{1}$%
, $D_{2}\geq C_{2}$, $\gamma $ do not depend on $k$.
\end{lemma}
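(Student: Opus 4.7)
The plan is to establish each of the three estimates by first working at the level of the cutoff $f_k(t,u) = \psi_k(|u|) f(t,u) + (1 - \psi_k(|u|)) g(u)$, and then transferring them to the mollification $f^k = f_k \ast \rho_{\epsilon_k}$. The key technical hinge is that $\epsilon_k \leq 1$ for every $k$, which is what prevents the constants $D_1$, $D_2$, $\gamma$ from drifting with $k$.

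For the cutoff itself, observe that $g$ satisfies analogs of (\ref{Cond1})--(\ref{Cond2}) directly, since $(g(u), u) = \sum_i |u^i|^{p_i}$ and $|g^i(u)|^{p_i/(p_i-1)} = |u^i|^{p_i}$. Because $f_k$ is a pointwise convex combination of $f$ and $g$, the elementary inequality $|a+b|^q \leq 2^{q-1}(|a|^q + |b|^q)$ delivers (\ref{Cond1}) for $f_k$ with constants depending only on those of $f$ and $g$, while $(f_k(t,u), u) \geq \min(\alpha, 1) \sum_i |u^i|^{p_i} - C_2$ is immediate from convexity in the coefficients.

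To pass to $f^k$: estimate (\ref{AuxEst1}) follows from (\ref{fkConverg}), since for $|u| \leq A$ and $k > A$ both $u$ and $u - s$ (with $|s| \leq \epsilon_k$) lie in $\{|v| \leq k\}$, where $f_k = f$, giving
\begin{equation*}
|f^k(t,u) - f(t,u)| = \Bigl| \int_{\mathbb{R}^d} \rho_{\epsilon_k}(s) \bigl(f_k(t, u-s) - f_k(t,u)\bigr) \, ds \Bigr| \leq \frac{1}{k}.
\end{equation*}
The growth half of (\ref{AuxEst2}) follows by Jensen's inequality applied to the convex function $|\cdot|^{p_i/(p_i-1)}$, combined with $|u^i - s^i|^{p_i} \leq 2^{p_i-1}(|u^i|^{p_i} + 1)$ for $|s| \leq 1$. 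For the dissipativity half, decompose $u = (u-s) + s$ to write
\begin{equation*}
(f^k(t,u), u) = \int_{\mathbb{R}^d} \rho_{\epsilon_k}(s) \bigl[(f_k(t, u-s), u-s) + (f_k(t, u-s), s)\bigr] \, ds,
\end{equation*}
apply the $f_k$-dissipativity to the first integrand (reinstating $\sum_i |u^i|^{p_i}$ via $|u^i|^{p_i} \leq 2^{p_i-1}(|u^i - s^i|^{p_i} + 1)$), and absorb the cross term $|(f_k(t,u-s), s)| \leq \epsilon_k \sum_i |f_k^i(t, u-s)|$ using (\ref{Cond1}) for $f_k$ and a Young inequality with a small parameter; this is precisely the step in which the bound $\epsilon_k \leq 1$ keeps the resulting constants uniform in $k$.

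Finally, for (\ref{AuxEst3}), $f^k(t, \cdot)$ is $C^\infty$ by construction, and I split $\mathbb{R}^d$ into $\{|u| \leq k+2\}$ and its complement. On the complement, $|u-s| \geq k+1$ for every $|s| \leq \epsilon_k$, so $f_k(t, u-s) = g(u-s)$ and $f^k(t,u) = (g \ast \rho_{\epsilon_k})(u)$ is independent of $t$; since $g_u(v)$ is diagonal with nonnegative entries $(p_i - 1)|v^i|^{p_i - 2}$, we have $(g_u(v) w, w) \geq 0$, and integrating against the nonnegative kernel $\rho_{\epsilon_k}$ yields $(f^k_u(t,u) w, w) \geq 0$ on this region. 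On the compact set $[\tau, T] \times \{|u| \leq k+2\}$, continuity of $f^k_u$ supplies some upper bound $M_k$, and taking $D_3(k) = M_k$ completes the argument. The main obstacle overall is the cross-term in the dissipativity estimate: controlling it so that $\gamma$ and $D_2$ remain independent of $k$ is where the normalization $\epsilon_k \leq 1$ really matters.
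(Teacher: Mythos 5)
Your argument is correct. Note that the paper does not actually prove Lemma \ref{Aux1} at all -- it imports it verbatim from \cite{KapVal09} (Lemma 2 there) -- so there is no in-paper proof to compare against; what you have written is a sound self-contained derivation along the natural lines suggested by the construction of $f_{k}$ and $f^{k}$ preceding the statement. All three steps check out: (\ref{AuxEst1}) follows from (\ref{fkConverg}) together with $f_{k}=f$ on $\{|u|\leq k\}$ once $k$ exceeds $A+1$; the uniform constants in (\ref{AuxEst2}) are indeed secured by Jensen's inequality for the growth bound and by your decomposition $u=(u-s)+s$ with the cross term absorbed via Young's inequality, where $\epsilon_{k}\leq 1$ is exactly what keeps $\gamma$ and $D_{2}$ independent of $k$; and the split of $\mathbb{R}^{d}$ at $|u|=k+2$ for (\ref{AuxEst3}) correctly exploits that $f_{k}(t,\cdot)$ coincides with the monotone function $g$ outside the ball of radius $k+1$, while continuity of $f_{u}^{k}$ on the remaining compact set supplies the $k$-dependent constant $D_{3}(k)$.
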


\bigskip

Consider first the scalar case.

\begin{theorem}
\label{ComparisonWeakScalar}Let $d=1$. Assume that $f_{j},h_{j}$ satisfy (%
\ref{Cond1})-(\ref{Cond2}) and (\ref{Ineqfh}). If $u_{\tau }^{1}\leq u_{\tau
}^{2},$ there exist two solutions $u_{1},u_{2}$ (of (\ref{P1}) and (\ref{P2}%
), respectively) such that $u_{1}\left( t\right) \leq u_{2}\left( t\right) $%
, for all $t\in \lbrack \tau ,T].$
\end{theorem}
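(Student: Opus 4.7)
The plan is to approximate both nonlinearities by the regularization $f_j \mapsto f_j^k$ constructed above (cutoff $\psi_k$ followed by mollification $\rho_{\epsilon_k}$), choosing a common $\epsilon_k$ small enough that (\ref{fkConverg}) holds for both $f_{1,k}$ and $f_{2,k}$. Since $\psi_k$ and $\rho_{\epsilon}$ are pointwise nonnegative and enter linearly, the ordering $f_1^i \geq f_2^i$ is preserved by both operations, so $f_1^k(t,u) \geq f_2^k(t,u)$ for all $(t,u)$. By Lemma \ref{Aux1} each $f_j^k$ satisfies (\ref{Cond1})--(\ref{Cond2}) with uniform constants and the local Lipschitz-type bound (\ref{AuxEst3}) with constant $D_3(k)$, so the corresponding approximating problems have unique solutions $u_j^k$ with initial data $u_\tau^j$. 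As $d=1$, condition (\ref{Cond4}) is automatic, so Theorem \ref{Comparison} applies to the approximating pair and yields $u_1^k(t) \leq u_2^k(t)$ for all $t \in [\tau, T]$.

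Next I pass to the limit $k \to \infty$. Because the constants in (\ref{AuxEst2}) are independent of $k$, the energy estimate (\ref{Est1}) gives uniform bounds for $\{u_j^k\}$ in $L^{\infty}(\tau,T;H) \cap L^{2}(\tau,T;V) \cap L^{p}(\tau,T;L^{p}(\Omega))$, and the growth bound in (\ref{AuxEst2}) then bounds $\{f_j^k(\cdot,u_j^k)\}$ in $L^{q}(\tau,T;L^{q}(\Omega))$, hence $\{du_j^k/dt\}$ in $L^{2}(\tau,T;V') + L^{q}(\tau,T;L^{q}(\Omega))$. An Aubin--Lions compactness argument extracts subsequences along which $u_j^k \to u_j$ strongly in $L^{2}(\tau,T;H)$ and a.e.\ on $(\tau,T) \times \Omega$, and weakly in the natural reflexive spaces. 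Combining the a.e.\ convergence of $u_j^k$ with (\ref{AuxEst1}) and the uniform integrability supplied by (\ref{AuxEst2}) (via Vitali's theorem applied after truncating outside a large ball in $u$), one identifies $f_j^k(\cdot,u_j^k) \rightharpoonup f_j(\cdot,u_j)$ weakly in $L^{q}(\tau,T;L^{q}(\Omega))$. Passing to the limit in (\ref{Solution}) then shows that $u_j$ is a weak solution of (\ref{P1}), resp.\ (\ref{P2}), with initial datum $u_\tau^j$.

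Finally, the pointwise a.e.\ convergence preserves the inequality $u_1^k \leq u_2^k$, giving $u_1(t,x) \leq u_2(t,x)$ for a.a.\ $(t,x)$, which upgrades to every $t \in [\tau,T]$ by continuity into $H$. The step I expect to be the main obstacle is the identification of the nonlinear limit $f_j^k(\cdot, u_j^k) \rightharpoonup f_j(\cdot, u_j)$: a.e.\ convergence of $u_j^k$ alone is insufficient because $f_j^k$ itself varies with $k$, so one has to exploit (\ref{AuxEst1}) on bounded sets together with the uniform $L^{q}$ bound on the large-amplitude part. The construction of the approximants is tailored to this (nonnegativity of weights preserves the inequality; Lemma \ref{Aux1} preserves the growth constants), which is precisely what makes the scheme compatible with both Theorem \ref{Comparison} and the limiting procedure.
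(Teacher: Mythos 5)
Your proposal follows essentially the same route as the paper's proof: regularize both nonlinearities by the same cutoff-plus-mollification scheme, apply Theorem \ref{Comparison} to the approximating problems (where (\ref{Cond3}) holds by (\ref{AuxEst3}) and (\ref{Cond4}) is automatic since $d=1$), derive $k$-uniform estimates, and pass to the limit by compactness. The one detail worth making explicit is that preservation of the ordering under the cutoff step uses that the exponent $p$ is the same for $f_{1}$ and $f_{2}$ (Lemma \ref{ConstantsP}), since the replacement term $|u|^{p-2}u$ must coincide for both; otherwise your limit passage (Lions/Vitali identification of the nonlinear limit, and upgrading the a.e.\ inequality to all $t$ via continuity into $H$) matches the paper's argument up to cosmetic differences.
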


\begin{proof}
For the functions $f_{j}$ we take the approximations $f_{j}^{k}$ (defined in
Lemma \ref{Aux1}), which satisfy (\ref{AuxEst1})-(\ref{AuxEst3}), and
consider the problems%
\begin{equation}
\left\{ 
\begin{array}{l}
\dfrac{\partial u}{\partial t}-a\Delta u+f_{j}^{k}(t,u)=h_{j}\left(
t,x\right) ,\ \ (t,x)\in (\tau ,T)\times \Omega , \\ 
u|_{x\in \partial \Omega }=0, \\ 
u|_{t=\tau }=u_{\tau },%
\end{array}%
\right.  \label{P1kb}
\end{equation}%
for $j=1,2.$ Problem (\ref{P1kb}) has a unique solution for any initial data 
$u_{\tau }\in H$. In view of Lemma \ref{ConstantsP} the constant $p$ is the
same for $f^{1}$ and $f^{2}$. We note that 
\begin{eqnarray*}
f_{1k}\left( t,u\right) &=&\psi _{k}\left( \left\vert u\right\vert \right)
f_{1}\left( t,u\right) +\left( 1-\psi _{k}\left( \left\vert u\right\vert
\right) \right) \left\vert u\right\vert ^{p-2}u \\
&\geq &\psi _{k}\left( \left\vert u\right\vert \right) f_{2}\left(
t,u\right) +\left( 1-\psi _{k}\left( \left\vert u\right\vert \right) \right)
\left\vert u\right\vert ^{p-2}u=f_{2k}\left( t,u\right) .
\end{eqnarray*}%
Then it is clear that $f_{1}^{k}(t,u)\geq f_{2}^{k}(t,u)$ for every $\left(
t,u\right) .$

By Theorem \ref{Comparison} we know that as $u_{\tau }^{1}\leq u_{\tau
}^{2}, $ we have $u_{1}^{k}\left( t\right) \leq u_{2}^{k}\left( t\right) $,
for all $t\in \lbrack \tau ,T]$, for the corresponding solutions of (\ref%
{P1kb}).

In view of Lemma \ref{Aux1} one can obtain in a standard way that (\ref{Est1}%
) is satisfied for the solutions of (\ref{P1kb}) with a constant $C$ not
depending on $k$ and replacing $\alpha $ by $\gamma $. Hence, the sequences $%
u_{j}^{k}\left( \text{\textperiodcentered }\right) $ are bounded in $%
L^{\infty }\left( \tau ,T;H\right) \cap L^{2}\left( \tau ,T;V\right) \cap
L^{p}\left( \tau ,T;L^{p}\left( \Omega \right) \right) $. It follows from (%
\ref{AuxEst2}) that $f_{jk}($\textperiodcentered $,u_{j}^{k}\left( \text{%
\textperiodcentered }\right) )$ are bounded in $L^{q}\left( \tau
,T;L^{q}\left( \Omega \right) \right) $ and also that $\{\dfrac{du^{k}}{dt}%
(\cdot )\}$ is bounded in $L^{q}(\tau ,T;H^{-r}(\Omega ))$, where $%
r_{i}=\max \{1;(\frac{1}{2}-\frac{1}{p_{i}})N\}$. By the Compactness Lemma 
\cite{Lions} we have that for some functions $u_{j}=u_{j}(t,x)$, $j=1,2$: 
\begin{equation}
u_{j}^{k}\rightarrow u_{j}\ \text{weakly star in}\ L^{\infty }(\tau ,T;H),
\label{Conv1}
\end{equation}%
\begin{equation}
u_{j}^{k}\rightarrow u_{j}\ \text{in}\ L^{2}(\tau ,T;H),\
u_{j}^{k}(t)\rightarrow u_{j}(t)\ \text{in}\ H\ \text{for\ a.a. }t\in (\tau
,T),  \label{Conv2}
\end{equation}%
\begin{equation}
u_{j}^{k}(t,x)\rightarrow u_{j}(t,x)\ \text{for a.a}.\ (t,x)\in (\tau
,T)\times \Omega ,  \label{Conv3}
\end{equation}%
\begin{equation}
u_{j}^{k}\rightarrow u_{j}\ \text{weakly in}\ L^{2}(\tau ,T;V),
\label{Conv4}
\end{equation}%
\begin{equation}
\dfrac{du_{j}^{k}}{dt}\rightarrow \dfrac{du_{j}}{dt}\text{ weakly in }%
L^{q}(\tau ,T;H^{-r}(\Omega )),  \label{Conv5}
\end{equation}%
\begin{equation}
u_{j}^{k}\rightarrow u_{j}\ \text{weakly in}\ L^{p}(\tau ,T;L^{p}\left(
\Omega \right) ).  \label{Conv5b}
\end{equation}%
Also, arguing as in \cite[p.3037]{MorVal2} we obtain%
\begin{equation}
u_{j}^{k}(t)\rightarrow u_{j}(t)\ \text{weakly in }H\ \text{for all }t\in
\lbrack \tau ,T].  \label{Conv6}
\end{equation}%
Moreover, by (\ref{AuxEst1}) and (\ref{Conv3}) we have $f_{jk}(t,u^{k}(t,x))%
\rightarrow f_{j}(t,u(t,x))$ for a.a. $(t,x)\in (\tau ,T)\times \Omega $ and
then the boundedness of $f_{jk}\left( \text{\textperiodcentered }%
,u_{j}^{k}\left( \text{\textperiodcentered }\right) \right) $ in $L^{q}(\tau
,T;L^{q}(\Omega ))$ implies that $f_{jk}($\textperiodcentered $%
,u_{j}^{k}\left( \text{\textperiodcentered }\right) )$ converges to $f($%
\textperiodcentered $,u($\textperiodcentered $))$ weakly in $L^{q}(\tau
,T;L^{q}(\Omega ))$ \cite{Lions}. It follows that $u_{1}(\cdot ),$ $%
u_{2}(\cdot )$ are weak solutions of (\ref{P1}) and (\ref{P2}),
respectively, with $u_{1}(\tau )=u_{\tau }^{1}$, $u_{2}\left( \tau \right)
=u_{\tau }^{2}$.

Moreover, one can prove that 
\begin{equation}
u_{j}^{k}(t)\rightarrow u_{j}(t)\ \text{strongly in }H\ \text{for all }t\in
\lbrack \tau ,T].  \label{Conv7}
\end{equation}%
Indeed, we define the functions $J_{jk}(t)=\Vert u_{j}^{k}(t)\Vert
^{2}-C\int\limits_{\tau }^{t}(\Vert h_{j}(s)\Vert ^{2}+1)ds$, $%
J_{j}(t)=\Vert u_{j}(t)\Vert ^{2}-C\int\limits_{\tau }^{t}(\Vert
h_{j}(s)\Vert ^{2}+1)ds$, which are non-increasing in view of (\ref{Est1}).
Also, from (\ref{Conv2}) we have $J_{jk}(t)\rightarrow J_{j}(t)$ for a.a. $%
t\in \left( \tau ,T\right) $. Then one can prove that $\lim
\sup_{k\rightarrow \infty }J_{jk}(t)\leq J_{j}(t)$ for all $t\in \lbrack
\tau ,T]$ (see \cite[p.623]{KapVal06} for the details). Hence, $\lim
\sup_{k\rightarrow \infty }\Vert u_{j}^{k}(t)\Vert \leq \Vert u_{j}(t)\Vert $%
. Together with (\ref{Conv6}) this implies (\ref{Conv7}) (see again \cite[%
p.623]{KapVal06} for more details).

Hence, passing to the limit we obtain 
\begin{equation*}
u_{1}\left( t\right) \leq u_{2}\left( t\right) ,\text{ for all }t\in \lbrack
\tau ,T].
\end{equation*}
\end{proof}

\bigskip

Further, let us prove the general case for an arbitrary $d\in \mathbb{N}.$

\begin{theorem}
\label{ComparisonWeak}Assume that $f_{j},h_{j}$ satisfy (\ref{Cond1})-(\ref%
{Cond2}) and (\ref{Ineqfh}). Also, suppose that either $f_{1}$ or $f_{2}$
satisfies (\ref{Cond4}) for an arbitrary $R_{0}>0$. If $u_{\tau }^{1}\leq
u_{\tau }^{2},$ there exist two solutions $u_{1},u_{2}$ (of (\ref{P1}) and (%
\ref{P2}), respectively) such that $u_{1}\left( t\right) \leq u_{2}\left(
t\right) $, for all $t\in \lbrack \tau ,T].$
\end{theorem}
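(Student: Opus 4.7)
The plan is to mimic the approximation-and-pass-to-the-limit scheme from the proof of Theorem \ref{ComparisonWeakScalar}, with the additional task of verifying the cooperativity condition (\ref{Cond4}) for the approximated nonlinearities. Three things have to be checked: that $f_1^k \geq f_2^k$ componentwise; that one of $f_1^k, f_2^k$ satisfies (\ref{Cond4}) globally so that Theorem \ref{Comparison} applies to the approximated problems; and that the order is preserved in the limit $k \to \infty$.

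First, I would replace $f_1, f_2$ by the smooth approximations $f_1^k, f_2^k$ from Lemma \ref{Aux1}. By Lemma \ref{ConstantsP} the exponents $p_i$ coincide for the two nonlinearities, so the same function $g^i(u) = |u^i|^{p_i-2}u^i$ is used in both truncations. The componentwise inequality $f_1^k \geq f_2^k$ then follows as in the scalar proof: the cutoff preserves (\ref{Ineqfh}) since $f_{1,k} - f_{2,k} = \psi_k(|u|)(f_1 - f_2) \geq 0$, and convolution against the nonnegative kernel $\rho_{\epsilon_k}$ preserves the inequality in each coordinate. Because (\ref{AuxEst3}) holds, problem (\ref{P1kb}) with $f_j^k$ has a unique weak solution $u_j^k$ for each $j,k$.

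Next, verify that one of $f_1^k, f_2^k$ inherits (\ref{Cond4}) globally. Assume without loss of generality that $f_2$ satisfies (\ref{Cond4}) for arbitrary $R_0$. The key observation is that convolution against $\rho_{\epsilon_k} \geq 0$ preserves cooperativity: when $u^i = v^i$ and $u^j \leq v^j$ for $j \neq i$, the shifts $u - s, v - s$ satisfy the same relations, so $f_2^i(t, u-s) \geq f_2^i(t, v-s)$ pointwise in $s$, and integrating against $\rho_{\epsilon_k}$ yields cooperativity of the mollified function. The delicate point is the outer cutoff $\psi_k(|u|)$, which depends on the Euclidean norm and is not monotone componentwise; since $g^i$ depends only on $u^i$ (so $g^i(u) = g^i(v)$ under the above relations) the convex combination $\psi_k f_2 + (1-\psi_k) g$ may have to be adjusted—for instance by replacing the norm-based cutoff by a componentwise one—so that the interpolation is itself cooperative on all of $\mathbb{R}^d$ while the bounds of Lemma \ref{Aux1} are retained. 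With this achieved, Theorem \ref{Comparison} applied to (\ref{P1kb}) yields $u_1^k(t) \leq u_2^k(t)$ for all $t \in [\tau, T]$.

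Finally, pass to the limit $k \to \infty$ exactly as in Theorem \ref{ComparisonWeakScalar}. The uniform estimates from (\ref{AuxEst2}) give $\{u_j^k\}$ bounded in $L^\infty(\tau, T; H) \cap L^2(\tau, T; V) \cap L^p(\tau, T; L^p(\Omega))$ and $\{du_j^k/dt\}$ bounded in $L^q(\tau, T; H^{-r}(\Omega))$. A standard compactness argument yields convergences (\ref{Conv1})--(\ref{Conv7}); the identification $f_j^k(\cdot, u_j^k) \rightharpoonup f_j(\cdot, u_j)$ in $L^q(\tau, T; L^q(\Omega))$ uses (\ref{AuxEst1}) together with pointwise a.e.\ convergence of $u_j^k$, so that $u_1, u_2$ are weak solutions of (\ref{P1}), (\ref{P2}) with the prescribed initial data. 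Taking the a.e.\ pointwise limit in $u_1^k(t,x) \leq u_2^k(t,x)$, combined with the strong convergence in $H$ for every $t \in [\tau, T]$, gives $u_1(t) \leq u_2(t)$ for all such $t$. I expect the main obstacle to be the second step: the norm-based cutoff used in Lemma \ref{Aux1} can destroy cooperativity in the transition annulus $\{k \leq |u| \leq k+1\}$, and some ingenuity is required to build (or reinterpret) the approximation so that global cooperativity is preserved together with the dissipativity and growth bounds of Lemma \ref{Aux1}.
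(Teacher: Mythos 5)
Your overall strategy (approximate via Lemma \ref{Aux1}, apply Theorem \ref{Comparison} to the approximated problems, pass to the limit as in Theorem \ref{ComparisonWeakScalar}) is the paper's strategy, and your first and third steps are carried out correctly. The gap is in your second step, and you have flagged it yourself: you never actually establish that one of the approximations $f_j^k$ satisfies (\ref{Cond4}), and the fix you sketch --- replacing the norm-based cutoff $\psi_k(|u|)$ by a componentwise one so as to obtain \emph{global} cooperativity --- is not carried out and is not free: changing the truncation would force you to re-derive the uniform dissipativity and growth bounds (\ref{AuxEst2}) of Lemma \ref{Aux1}, which are proved for the norm-based cutoff and are exactly what makes the limit passage work with constants independent of $k$.

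The paper's resolution is different and much lighter: global cooperativity of $f_1^k$ is never needed, because Theorem \ref{Comparison} was deliberately stated with a \emph{local} hypothesis --- (\ref{Cond4}) is only required on a ball of radius $R_0$ with $R_0^2\geq 2\max\{K^2(\Vert u_\tau^1\Vert,\tau,T),K^2(\Vert u_\tau^2\Vert,\tau,T)\}$, where $K$ comes from the a priori estimate (\ref{Est2}), which holds for the approximated problems with a constant independent of $k$. Since $\psi_k(|u|)=1$ for $|u|\leq k$, the truncation $f_{1k}$ coincides with $f_1$ there and so inherits (\ref{Cond4}) with $R_0=k$; after mollification with radius $\epsilon_k<1$ one still has (\ref{Cond4}) with $R_0=k-1$ (your observation that convolution against $\rho_{\epsilon_k}\geq 0$ preserves the cooperativity relations of the shifted arguments is exactly the point here). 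Taking $k\geq 1+(2\max\{K^2,K^2\})^{1/2}$ puts the transition annulus $\{k\leq|u|\leq k+1\}$ --- the region you were worried about --- entirely outside the ball where cooperativity is needed, so Theorem \ref{Comparison} applies directly to (\ref{P1kb}) for all large $k$, and no modification of the approximation is required.
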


\begin{proof}
Let $f_{1}$ be the function which satisfies (\ref{Cond4}). We take the
approximations $f_{1}^{k},\ f_{2}^{k}$ (defined in Lemma \ref{Aux1}), which
satisfy (\ref{AuxEst1})-(\ref{AuxEst3}). Then we consider problems (\ref%
{P1kb}).

In view of Lemma \ref{ConstantsP} the constants $p_{i}$ are the same for $%
f^{1}$ and $f^{2}$. We note that 
\begin{eqnarray*}
f_{1k}^{i}\left( t,u\right) &=&\psi _{k}\left( \left\vert u\right\vert
\right) f_{1}^{i}\left( t,u\right) +\left( 1-\psi _{k}\left( \left\vert
u\right\vert \right) \right) \left\vert u^{i}\right\vert ^{p_{i}-2}u^{i} \\
&\geq &\psi _{k}\left( \left\vert u\right\vert \right) f_{2}^{i}\left(
t,u\right) +\left( 1-\psi _{k}\left( \left\vert u\right\vert \right) \right)
\left\vert u^{i}\right\vert ^{p_{i}-2}u^{i}=f_{2k}^{i}\left( t,u\right) .
\end{eqnarray*}%
Then it is clear that $f_{1}^{k}(t,u)\geq f_{2}^{k}(t,u)$ for every $\left(
t,u\right) .$

Using Lemma \ref{Aux1} it is standard to obtain estimate (\ref{Est2}) with a
constant $C$ not depending on $k$. Hence, the solutions $u_{j}^{k}\left( 
\text{\textperiodcentered }\right) $ of (\ref{P1kb}) satisfy%
\begin{equation*}
\left\Vert u_{j}^{k}\left( t\right) \right\Vert ^{2}\leq \Vert u_{\tau
}^{j}\Vert ^{2}+C\int\limits_{\tau }^{T}(\Vert h_{j}(r)\Vert
^{2}+1)dr=K^{2}\left( \Vert u_{\tau }^{j}\Vert ,\tau ,T\right) .
\end{equation*}%
We note that 
\begin{equation*}
f_{1k}\left( t,u\right) =f_{1}\left( t,u\right) ,
\end{equation*}%
if $\left\vert u\right\vert \leq k$, since in such a case $\psi _{k}\left(
\left\vert u\right\vert \right) =1$. Hence, if $k^{2}\geq 2\max
\{K^{2}\left( \Vert u_{\tau }^{1}\Vert ,\tau ,T\right) ,K^{2}\left( \Vert
u_{\tau }^{2}\Vert ,\tau ,T\right) \}$ the functions $f_{1k}$ satisfy
condition (\ref{Cond4}) with $R_{0}=k.$ Therefore, for any $t\in \lbrack
\tau ,T]$ and any $u,v\in \mathbb{R}^{d}$ such that $u^{i}=v^{i}$ and $%
u^{j}\leq v^{j}$ if $j\neq i,$ and $\left\vert u\right\vert ,\left\vert
v\right\vert \leq k-1$, we have%
\begin{equation*}
f_{1}^{k}\left( t,u\right) =\int_{\mathbb{R}^{d}}\rho _{\epsilon
_{k}}(s)f_{1k}(t,u-s)ds\geq \int_{\mathbb{R}^{d}}\rho _{\epsilon
_{k}}(s)f_{1k}(t,v-s)ds=f_{2}^{k}\left( t,u\right) .
\end{equation*}%
Thus, if $\left( k-1\right) ^{2}\geq 2\max \{K^{2}\left( \Vert u_{\tau
}^{1}\Vert ,T\right) ,K^{2}\left( \Vert u_{\tau }^{2}\Vert ,T\right) \},$
the functions $f_{1}^{k}$ satisfy condition (\ref{Cond4}) with $R_{0}=k-1$.

By Theorem \ref{Comparison} we know that as $u_{\tau }^{1}\leq u_{\tau
}^{2}, $ we have $u_{1}^{k}\left( t\right) \leq u_{2}^{k}\left( t\right) $,
for all $t\in \lbrack \tau ,T]$, $k\geq 1+(2\max \{K^{2}\left( \Vert u_{\tau
}^{1}\Vert ,T\right) ,K^{2}\left( \Vert u_{\tau }^{2}\Vert ,T\right) \})^{%
\frac{1}{2}}$, for the corresponding solutions of (\ref{P1kb}).

Repeating the same proof of Theorem \ref{ComparisonWeakScalar} we obtain
that the sequences $u_{1}^{k},u_{2}^{k}$ converge (up to a subsequence) in
the sense of (\ref{Conv1})-(\ref{Conv7}) to the solutions $u_{1},u_{2}$ of
problem (\ref{P1}) and (\ref{P2}), respectively. Also, it holds%
\begin{equation*}
u_{1}\left( t\right) \leq u_{2}\left( t\right) ,\text{ for all }t\in \lbrack
\tau ,T].
\end{equation*}
\end{proof}

\bigskip

In the applications we need to generalize this theorem to the case where the
constant $\alpha $ can be negative. We shall do this when $f_{1},f_{2}$ have
sublinear growth (that is, $p_{i}=2$ for all $i$). Consider for (\ref%
{Equation}) the following conditions:%
\begin{equation}
\left\vert f\left( t,u\right) \right\vert \leq C_{1}\left( 1+\left\vert
u\right\vert \right) ,  \label{Cond1b}
\end{equation}%
\begin{equation}
\left( f\left( t,u\right) ,u\right) \geq \alpha \left\vert u\right\vert
^{2}-C_{2},  \label{Cond2b}
\end{equation}%
where $\alpha \in \mathbb{R}$, and $C_{1},C_{2}>0.$

Let $f_{1},f_{2}$ satisfy (\ref{Cond1b})-(\ref{Cond2b}) with constants $%
\alpha ^{j},C_{1}^{j},C_{2}^{j},\ j=1,2$. Then if $\min \{\alpha ^{1},\alpha
^{2}\}\leq 0$, we make in (\ref{Equation}) the change of variable $%
v=e^{-\beta t}u$, where $\beta >-\min \{\alpha ^{1},\alpha ^{2}\}$. Hence,
multiplying (\ref{P1}) and (\ref{P2}) by $e^{-\beta t}$ we have%
\begin{equation}
\left\{ 
\begin{array}{l}
\dfrac{\partial v}{\partial t}-a\Delta v+e^{-\beta t}f_{1}(t,e^{\beta
t}v)+\beta v=e^{-\beta t}h_{1}(t,x),\ \ \ (t,x)\in (\tau ,T)\times \Omega ,
\\ 
v|_{x\in \partial \Omega }=0, \\ 
v|_{t=\tau }=e^{-\beta \tau }u_{\tau }(x),%
\end{array}%
\right.  \label{P1b}
\end{equation}%
\begin{equation}
\left\{ 
\begin{array}{l}
\dfrac{\partial v}{\partial t}-a\Delta v+e^{-\beta t}f_{2}(t,e^{\beta
t}v)+\beta v=e^{-\beta t}h_{2}(t,x),\ \ \ (t,x)\in (\tau ,T)\times \Omega ,
\\ 
v|_{x\in \partial \Omega }=0, \\ 
v|_{t=\tau }=e^{-\beta \tau }u_{\tau }(x).%
\end{array}%
\right.  \label{P2b}
\end{equation}

It is easy to check that if $v\left( t\right) $ is a weak solution of (\ref%
{P1b}), then $u\left( t\right) =e^{\beta t}v\left( t\right) $ is a weak
solution of (\ref{P1}) (and the same is true, of course, for (\ref{P2b}) and
(\ref{P2})). Conversely, if $u\left( t\right) $ is a weak solution of (\ref%
{P1}), then $v\left( t\right) =e^{-\beta t}u\left( t\right) $ is a weak
solution of (\ref{P1b}).

The functions $\widetilde{f}_{j}\left( t,v\right) =e^{-\beta
t}f_{j}(t,e^{\beta t}v)+\beta v$ satisfy (\ref{Cond1})-(\ref{Cond2}) with $%
p_{i}=2$ for all $i$. Indeed,%
\begin{equation}
|\widetilde{f}_{j}(t,v)|\leq \leq e^{-\beta t}C_{1}^{j}\left( 1+e^{\beta
t}\left\vert v\right\vert \right) +\beta \left\vert v\right\vert \leq 
\widetilde{C}_{1}^{j}\left( 1+\left\vert v\right\vert \right) ,  \label{C1}
\end{equation}%
\begin{eqnarray}
\left( \widetilde{f}_{j}(t,v),v\right) &\geq &e^{-2\beta t}(f_{j}(t,e^{\beta
t}v),e^{\beta t}v)+\beta \left\vert v\right\vert ^{2}  \label{C2} \\
&\geq &\left( \alpha ^{j}+\beta \right) \left\vert v\right\vert
^{2}-C_{2}^{j},  \notag
\end{eqnarray}%
where $\alpha ^{j}+\beta >0$.

Then we obtain the following.

\begin{theorem}
\label{ComparisonWeak2}Assume that $f_{j},h_{j}$ satisfy (\ref{Cond1b})-(\ref%
{Cond2b}) and (\ref{Ineqfh}). Also, suppose that either $f_{1}$ or $f_{2}$
satisfies (\ref{Cond4}) for an arbitrary $R_{0}>0$. If $u_{\tau }^{1}\leq
u_{\tau }^{2},$ there exist two solutions $u_{1},u_{2}$ (of (\ref{P1}) and (%
\ref{P2}), respectively) such that $u_{1}\left( t\right) \leq u_{2}\left(
t\right) $, for all $t\in \lbrack \tau ,T].$
\end{theorem}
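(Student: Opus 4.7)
The plan is to reduce the statement to Theorem \ref{ComparisonWeak} via the exponential rescaling $v(t)=e^{-\beta t}u(t)$ already introduced just above the statement, with $\beta>-\min\{\alpha^{1},\alpha^{2}\}$ chosen so that both shifted dissipativity constants $\alpha^{j}+\beta$ are strictly positive. The rescaled problems are (\ref{P1b}) and (\ref{P2b}), whose nonlinearities are $\widetilde{f}_{j}(t,v)=e^{-\beta t}f_{j}(t,e^{\beta t}v)+\beta v$ and whose forcing terms are $\widetilde{h}_{j}(t,x)=e^{-\beta t}h_{j}(t,x)$. By (\ref{C1})--(\ref{C2}) these $\widetilde{f}_{j}$ satisfy (\ref{Cond1})--(\ref{Cond2}) with $p_{i}=2$ for all $i$ and with positive $\alpha^{j}+\beta$, so the hypotheses of Theorem \ref{ComparisonWeak} concerning growth and dissipativity are met.

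Next I would verify that the comparison hypotheses transfer cleanly under the rescaling. For (\ref{Ineqfh}): since $f_{1}^{i}(t,u)\geq f_{2}^{i}(t,u)$ for every $(t,u)$ and the additive term $\beta v$ is the same for both, multiplying by the positive factor $e^{-\beta t}$ gives $\widetilde{f}_{1}^{i}(t,v)\geq \widetilde{f}_{2}^{i}(t,v)$, and likewise $\widetilde{h}_{1}\leq\widetilde{h}_{2}$. For (\ref{Cond4}): assume $f_{1}$ satisfies it on all of $\mathbb{R}^{d}$ (the other case is symmetric). Given $u,v\in\mathbb{R}^{d}$ with $u^{i}=v^{i}$ and $u^{j}\leq v^{j}$ for $j\neq i$, the scaled points $e^{\beta t}u$ and $e^{\beta t}v$ satisfy the same ordering because $e^{\beta t}>0$; applying (\ref{Cond4}) to $f_{1}$ at these points and multiplying by $e^{-\beta t}$ yields $\widetilde{f}_{1}^{i}(t,u)\geq \widetilde{f}_{1}^{i}(t,v)$, since the added diagonal term $\beta u^{i}=\beta v^{i}$ contributes equally on both sides. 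Thus $\widetilde{f}_{1}$ is cooperative on all of $\mathbb{R}^{d}$.

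Since $u_{\tau}^{1}\leq u_{\tau}^{2}$ and $e^{-\beta\tau}>0$, the rescaled initial data also satisfy $e^{-\beta\tau}u_{\tau}^{1}\leq e^{-\beta\tau}u_{\tau}^{2}$. Applying Theorem \ref{ComparisonWeak} to problems (\ref{P1b})--(\ref{P2b}) yields weak solutions $v_{1},v_{2}$ with the prescribed initial data and with $v_{1}(t)\leq v_{2}(t)$ for all $t\in[\tau,T]$. Using the one-to-one correspondence between weak solutions of (\ref{P1})--(\ref{P2}) and of (\ref{P1b})--(\ref{P2b}) noted in the excerpt, the functions $u_{j}(t)=e^{\beta t}v_{j}(t)$ are weak solutions of (\ref{P1}) and (\ref{P2}) with initial data $u_{\tau}^{1},u_{\tau}^{2}$, and multiplying $v_{1}(t)\leq v_{2}(t)$ by the positive factor $e^{\beta t}$ gives $u_{1}(t)\leq u_{2}(t)$ on $[\tau,T]$.

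The arguments are all routine transfers under a positive multiplicative rescaling; the only point requiring any care is the verification that the cooperative condition (\ref{Cond4}) survives the change of variable, which I expect to be the main subtlety — one needs the added linear term $\beta v$ to enter only on the diagonal (so that the equality-of-the-$i$-th-component hypothesis makes its contribution cancel), which is exactly how the rescaling is set up.
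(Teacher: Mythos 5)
Your proposal is correct and follows essentially the same route as the paper: reduce to Theorem \ref{ComparisonWeak} via the change of variable $v=e^{-\beta t}u$, check that (\ref{Cond1})--(\ref{Cond2}), (\ref{Ineqfh}) and (\ref{Cond4}) transfer to $\widetilde{f}_{j}$, $\widetilde{h}_{j}$, and then scale back. Your explicit verification that the cooperativity condition survives the rescaling (the paper dismisses this as obvious) is a welcome addition but not a difference in method.
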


\begin{proof}
We consider problems (\ref{P1b}) and (\ref{P2b}). In view of (\ref{C1})-(\ref%
{C2}) $\widetilde{f}_{j}\left( t,v\right) =e^{-\beta t}f_{j}(t,e^{\beta
t}v)+\beta v$ satisfy (\ref{Cond1})-(\ref{Cond2}). Also, defining $%
\widetilde{h}_{j}\left( t,x\right) =e^{-\beta t}h_{j}(t,x)$ it is clear that
(\ref{Ineqfh}) holds. Finally, if, for example, $f_{1}$ satisfies (\ref%
{Cond4}) for any $R_{0}>0$, then it is obvious that for $\widetilde{f}_{1}$
is true as well.

Hence, by Theorem \ref{ComparisonWeak} there exist two solutions $%
v_{1},v_{2} $ (of (\ref{P1b}) and (\ref{P2b}), respectively), with $%
v_{j}\left( \tau \right) =e^{-\beta \tau }u_{\tau }^{j}$, such that $%
v_{1}\left( t\right) \leq v_{2}\left( t\right) $, for all $t\in \lbrack \tau
,T].$ Thus%
\begin{equation*}
u_{1}\left( t\right) =e^{\beta t}v_{1}\left( t\right) \leq e^{\beta
t}v_{2}\left( t\right) =u_{2}\left( t\right) \text{, for }t\in \lbrack \tau
,T],
\end{equation*}%
and $u_{1},u_{2}$ are solutions (of (\ref{P1}) and (\ref{P2}), respectively,
such that $u_{j}\left( \tau \right) =u_{\tau }^{j}$.
\end{proof}

\begin{remark}
If $f_{j}$ satisfy (\ref{Cond3}), then the solutions $u_{1},u_{2}$ given in
Theorem \ref{ComparisonWeak2} are unique for the corresponding initial data.
\end{remark}

\section{Comparison for positive solutions}

Denote $\mathbb{R}_{+}^{d}=\left\{ u\in \mathbb{R}^{d}:u^{i}\geq 0\right\} $%
. Let us consider the previous results in the case where the solutions have
to be positive. Consider now the following conditions:%
\begin{equation}
\text{The matrix }a\text{ is diagonal,}  \label{Cond10}
\end{equation}%
\begin{equation}
h^{i}\left( t,x\right) -f^{i}\left(
t,u^{1},...,u^{i-1},0,u^{i+1},...,u^{d}\right) \geq 0,  \label{Cond11}
\end{equation}%
for all $i,$ a.e. $\left( t,x\right) \in \left( \tau ,T\right) \times \Omega 
$ and $u^{j}\geq 0$ if $j\neq i.$ Obviously, in the scalar case these
conditions just mean that 
\begin{equation}
h\left( t,x\right) -f\left( t,0\right) \geq 0,  \label{Cond12}
\end{equation}%
for a.e. $\left( t,x\right) \in \left( \tau ,T\right) \times \Omega .$

It is well known (see \cite[Lemma 5]{KapVal09} for a detailed proof) that if
we assume conditions (\ref{Cond1})-(\ref{Cond2}) only\ for $u\in \mathbb{R}%
_{+}^{d}$, and also (\ref{Cond3}) and (\ref{Cond10})-(\ref{Cond11}), then
for any $u_{\tau }\geq 0$ there exists a unique weak solution $u\left( \text{%
\textperiodcentered }\right) $ of (\ref{Equation}). Moreover, $u\left( \text{%
\textperiodcentered }\right) $ is such that $u\left( t\right) \geq 0$ for
all $t\in \lbrack \tau ,T]$.

On the other hand, if we assume these conditions except (\ref{Cond3}), then
there exists at least one weak solution $u\left( \text{\textperiodcentered }%
\right) $ of (\ref{Equation}) such that $u\left( t\right) \geq 0$ for all $%
t\in \lbrack \tau ,T]$ \cite[Theorem 4]{KapVal09}. Moreover, we can prove
the following.

\begin{lemma}
\label{Unique}Assume conditions (\ref{Cond1})-(\ref{Cond2}), (\ref{Cond3})
only\ for $u\in \mathbb{R}_{+}^{d}$, and also (\ref{Cond10})-(\ref{Cond11}).
Then there exists a weak solution $u\left( \text{\textperiodcentered }%
\right) $ of (\ref{Equation}), which\ is unique in the class of solutions
satisfying $u\left( t\right) \geq 0$ for all $t\in \lbrack \tau ,T]$.
\end{lemma}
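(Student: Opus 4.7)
The plan is to leverage the cited existence result \cite[Theorem 4]{KapVal09}, which under the standing hypotheses furnishes at least one nonnegative weak solution $u(\cdot)$ of (\ref{Equation}); the new content is uniqueness \emph{inside} the nonnegative cone. To prove it, I would take two weak solutions $u_1,u_2$ of (\ref{Equation}) with the same initial datum $u_\tau\ge 0$ and satisfying $u_j(t,x)\ge 0$ for all $t\in[\tau,T]$ and a.a.\ $x\in\Omega$, set $w=u_1-u_2$, and run a direct energy-plus-Gronwall argument. The crux of the proof is that although (\ref{Cond3}) is only assumed on $\mathbb{R}_+^d$, it will be available along the entire segment joining the two solutions, because convex combinations of nonnegative vectors remain nonnegative.

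Subtracting the weak formulations (\ref{Solution}) for $u_1$ and $u_2$ and testing with $\xi=w$---which is legitimate since $w\in L^p(\tau,T;L^p(\Omega))\cap L^2(\tau,T;V)$ and $dw/dt\in L^2(\tau,T;V')+L^q(\tau,T;L^q(\Omega))$---I would apply the chain rule $\tfrac{d}{dt}\|w(t)\|^2=2\langle dw/dt,w\rangle$ recalled right after (\ref{Solution}). The diffusion term is handled by the positive symmetric part condition on $a$:
\begin{equation*}
\int_{\Omega}(\nabla(aw),\nabla w)\,dx \;\ge\; \beta\,\|\nabla w\|^2 \;\ge\; 0,
\end{equation*}
so it can simply be discarded. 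This leaves
\begin{equation*}
\tfrac{1}{2}\tfrac{d}{dt}\|w(t)\|^2 \;\le\; -\int_{\Omega}\bigl(f(t,u_1)-f(t,u_2),\,w\bigr)\,dx.
\end{equation*}

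For the reaction term, the Mean Value Theorem applied pointwise in $(t,x)$ gives
\begin{equation*}
f(t,u_1(t,x))-f(t,u_2(t,x)) \;=\; \Bigl(\int_{0}^{1} f_u\bigl(t,s u_1(t,x)+(1-s)u_2(t,x)\bigr)\,ds\Bigr)\,w(t,x).
\end{equation*}
Since $u_1(t,x),u_2(t,x)\in\mathbb{R}_+^d$ a.e., the convex combinations $s u_1+(1-s)u_2$ also lie in $\mathbb{R}_+^d$, which is exactly the region where (\ref{Cond3}) is available. Integrating in $s$ and pairing with $w$ yields the pointwise inequality $(f(t,u_1)-f(t,u_2),w)\ge -C_3(t)|w|^2$. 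Integrating in $x$ and combining with the previous display, I obtain $\tfrac{d}{dt}\|w(t)\|^2\le 2C_3(t)\|w(t)\|^2$. Gronwall's lemma together with $w(\tau)=0$ and $C_3\in L^1(\tau,T)$ then forces $w\equiv 0$, proving uniqueness in the positive class.

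The only genuinely delicate point is the pointwise applicability of (\ref{Cond3}): nothing is assumed about $f_u$ outside $\mathbb{R}_+^d$, so it is essential that both competing solutions take nonnegative values so that the entire segment $\{s u_1+(1-s)u_2:s\in[0,1]\}$ stays in the positive cone. This is precisely where the restriction to the class of nonnegative solutions is used; outside that class the argument would collapse. Everything else is a routine manipulation: the only care needed is in justifying the chain rule for $\|w(t)\|^2$, which is immediate from the regularity imposed in the definition of weak solution together with the statement recalled after (\ref{Solution}).
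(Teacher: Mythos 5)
Your proposal is correct and follows essentially the same route as the paper: an energy estimate for $w=u_1-u_2$, discarding the coercive diffusion term, applying the Mean Value Theorem along the segment joining $u_1(t,x)$ and $u_2(t,x)$ (which stays in $\mathbb{R}_+^d$ by convexity, so (\ref{Cond3}) applies), and concluding by Gronwall. Your use of the integral form of the Mean Value Theorem is a harmless (indeed slightly more precise, for vector-valued $f$) variant of the paper's single intermediate point $v(t,x,u_1,u_2)\in L(u_1,u_2)$.
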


\begin{proof}
Let $u_{1},u_{2}$ be two solutions with $u_{i}\left( \tau \right) =u_{\tau }$%
, $i=1,2$, and such that $u_{i}\left( t\right) \geq 0$ for all $t$. Denote $%
w\left( t\right) =u_{1}\left( t\right) -u_{2}\left( t\right) $. Then in a
standard way by the Mean Value Theorem we obtain%
\begin{eqnarray*}
\frac{1}{2}\frac{d}{dt}\left\Vert w\left( t\right) \right\Vert ^{2} &\leq
&-\int_{\Omega }\left( f(t,u_{1}\left( t,x\right) )-f\left( t,u_{2}\left(
t,x\right) \right) ,w\left( t,x\right) \right) dx \\
&=&-\int_{\Omega }\left( f_{u}\left( t,v\left( t,x,u_{1},u_{2}\right)
\right) w\left( t,x\right) ,w\left( t,x\right) \right) dx \\
&\leq &C_{3}\left( t\right) \left\Vert w\left( t\right) \right\Vert ^{2},
\end{eqnarray*}%
where $v\left( t,x,u_{1},u_{2}\right) \in L\left( u_{1}\left( t,x\right)
,u_{2}\left( t,x\right) \right) \ =\{\alpha u_{1}\left( t,x\right) +\left(
1-\alpha \right) u_{2}\left( t,x\right) :\alpha \in \lbrack 0,1]\}$, so
that, $v\left( t,x,u_{1},u_{2}\right) \geq 0.$ The uniqueness follows from
Gronwall's lemma
\end{proof}

\bigskip

We prove also a result, which is similar to Lemma \ref{ConstantsP}. Denote
by $p_{i}^{j}$, $\alpha ^{j},\ C_{1}^{j}$, $C_{2}^{j}$ the constants
corresponding to $f_{j}$ in (\ref{Cond1})-(\ref{Cond2}). For problems (\ref%
{P1}) and (\ref{P2}), respectively. Arguing as in the proof of Lemma \ref%
{ConstantsP} we obtain the following lemma.

\begin{lemma}
\label{ConstantsPPositive}If $f_{j}$ satisfy (\ref{Cond1})-(\ref{Cond2}) and
(\ref{Ineqfh}) for $u\in \mathbb{R}_{d}^{+}$, then $p_{i}^{1}\geq p_{i}^{2}$
for all $i.$
\end{lemma}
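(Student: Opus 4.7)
The plan is to reproduce, under the restriction $u\in\mathbb{R}^{d}_{+}$, only the first half of the two–sided argument used in Lemma \ref{ConstantsP}. Since the hypotheses (\ref{Cond1})--(\ref{Cond2}) and (\ref{Ineqfh}) are now assumed to hold exclusively on the positive orthant, I can only plug in test vectors with non-negative components; this is precisely why the conclusion becomes a one-sided inequality $p_{i}^{1}\geq p_{i}^{2}$ rather than equality.

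Fix an index $i\in\{1,\dots,d\}$ and argue by contradiction, assuming $p_{i}^{2}>p_{i}^{1}$. I would choose the test sequence $u_{n}=(0,\dots,0,u_{n}^{i},0,\dots,0)\in\mathbb{R}^{d}_{+}$ with $u_{n}^{i}\to+\infty$, which stays inside the region where all hypotheses apply. Applying (\ref{Cond2}) to $f_{2}$ gives the lower bound
\begin{equation*}
\alpha^{2}\,|u_{n}^{i}|^{p_{i}^{2}}-C_{2}^{2}\ \leq\ (f_{2}(t,u_{n}),u_{n})=f_{2}^{i}(t,u_{n})\,u_{n}^{i}.
\end{equation*}
Because $u_{n}^{i}\geq 0$, the pointwise inequality (\ref{Ineqfh}) multiplied by $u_{n}^{i}$ yields $f_{2}^{i}(t,u_{n})\,u_{n}^{i}\leq f_{1}^{i}(t,u_{n})\,u_{n}^{i}$. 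Finally, (\ref{Cond1}) for $f_{1}$ combined with Young's inequality (exactly as in Lemma \ref{ConstantsP}) bounds the right-hand side by $C(1+|u_{n}^{i}|^{p_{i}^{1}})$, giving
\begin{equation*}
\alpha^{2}\,|u_{n}^{i}|^{p_{i}^{2}}-C_{2}^{2}\ \leq\ C\bigl(1+|u_{n}^{i}|^{p_{i}^{1}}\bigr),
\end{equation*}
which is impossible for large $n$ since $p_{i}^{2}>p_{i}^{1}$. Hence $p_{i}^{1}\geq p_{i}^{2}$.

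The only real subtlety — and the reason the statement is one-sided — is that the second half of the proof of Lemma \ref{ConstantsP} relied on sending $u_{n}^{i}\to -\infty$, a move that is no longer admissible here because $u_{n}$ would leave $\mathbb{R}^{d}_{+}$ and the hypotheses (\ref{Cond1})--(\ref{Cond2}) would not be assumed to hold there. So the reverse inequality $p_{i}^{1}\leq p_{i}^{2}$ simply cannot be extracted, and one must be content with the asymmetric conclusion stated in the lemma. No genuine obstacle arises beyond keeping track of this asymmetry; the computation is otherwise a verbatim copy of the first part of the proof of Lemma \ref{ConstantsP}, repeated for each coordinate $i=1,\dots,d$.
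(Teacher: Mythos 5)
Your proof is correct and is exactly the argument the paper intends: the paper only says ``arguing as in the proof of Lemma \ref{ConstantsP}'', and your proposal carries out precisely the first half of that argument with the test sequence $u_{n}^{i}\to+\infty$ confined to $\mathbb{R}^{d}_{+}$. Your observation that the reverse inequality fails because $u_{n}^{i}\to-\infty$ would leave the positive orthant correctly identifies why the conclusion is one-sided.
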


\begin{theorem}
\label{ComparisonPositive}Let $f_{j},h_{j}$ satisfy (\ref{Cond3}) and (\ref%
{Cond10})-(\ref{Cond11}). Assume that $f_{j},h_{j}$ satisfy (\ref{Cond1})-(%
\ref{Cond2})\ and (\ref{Ineqfh}) for $u\in \mathbb{R}_{d}^{+}$. If $0\leq
u_{\tau }^{1}\leq u_{\tau }^{2}$ and we suppose that $f_{2}$ satisfies (\ref%
{Cond4}) for $u\in \mathbb{R}_{+}^{d}$ with $R_{0}^{2}\geq 2\max
\{K^{2}\left( \Vert u_{\tau }^{1}\Vert ,\tau ,T\right) ,K^{2}\left( \Vert
u_{\tau }^{2}\Vert ,\tau ,T\right) \}$, where $K\left( \Vert u_{\tau
}^{j}\Vert ,\tau ,T\right) $ is taken from (\ref{Est2}), we have $0\leq
u_{1}\left( t\right) \leq u_{2}\left( t\right) $, for all $t\in \lbrack \tau
,T]$, where $u_{1}\left( \text{\textperiodcentered }\right) ,u_{2}\left( 
\text{\textperiodcentered }\right) $ are the solutions corresponding to $%
u_{\tau }^{1}$ and $u_{\tau }^{2}$, respectively.
\end{theorem}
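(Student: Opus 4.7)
The plan is to adapt the proof of Theorem \ref{Comparison} to the positive-cone setting, using at the outset that the existence/positivity machinery cited just above the statement provides nonnegative solutions to which the argument can be applied.

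First I would invoke Lemma \ref{Unique} (and the existence result from \cite{KapVal09}) to produce, for each $j=1,2$, a weak solution $u_{j}(\cdot)$ of problem (\ref{P1}) (resp.\ (\ref{P2})) with $u_{j}(\tau)=u_{\tau}^{j}$ and $u_{j}(t)\geq 0$ for every $t\in[\tau,T]$. Since (\ref{Cond3}) is assumed, these are the unique solutions within the class of nonnegative solutions. The energy estimate (\ref{Est2}) holds for each of them, giving $\|u_{j}(t)\|\leq K(\|u_{\tau}^{j}\|,\tau,T)$ on $[\tau,T]$.

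Next I would repeat the computation of the proof of Theorem \ref{Comparison} verbatim on $(u_{1}-u_{2})^{+}$. Setting $g_{2}(t,u)=f_{2}(t,u)+C_{3}(t)u$ and defining $v_{2}(u_{1},u_{2})$ componentwise by (\ref{v2}), the inequality in (\ref{Ineqfh}) plus the Mean Value Theorem applied to $g_{2}$ yields
\begin{equation*}
\frac{1}{2}\frac{d}{dt}\|(u_{1}-u_{2})^{+}\|^{2}+a\|(u_{1}-u_{2})^{+}\|_{V}^{2}\leq C_{3}(t)\|(u_{1}-u_{2})^{+}\|^{2}-\int_{\Omega}\bigl(f_{2}(t,v_{2})-f_{2}(t,u_{2}),(u_{1}-u_{2})^{+}\bigr)\,dx.
\end{equation*}
As in (\ref{IneqCond4}), the last integral reduces to a sum over indices $i\in J$ with $u_{1}^{i}>u_{2}^{i}$, on which $v_{2}^{i}=u_{2}^{i}$ and $v_{2}^{j}\leq u_{2}^{j}$ for $j\neq i$.

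The key verification — the step I expect to require the most care — is that the cooperativity hypothesis (\ref{Cond4}) is applicable to this pair $(v_{2},u_{2})$. Two things must be checked: (a) $v_{2}\in\mathbb{R}_{+}^{d}$, which holds because $v_{2}^{j}=\min\{u_{1}^{j},u_{2}^{j}\}\geq 0$ since both $u_{1}(t,x),u_{2}(t,x)\geq 0$ by the positivity of the chosen solutions; and (b) $|v_{2}|,|u_{2}|\leq R_{0}$, which follows from $|v_{2}|^{2}\leq |u_{1}|^{2}+|u_{2}|^{2}$, the pointwise-in-time bound from (\ref{Est2}), and the standing assumption $R_{0}^{2}\geq 2\max\{K^{2}(\|u_{\tau}^{1}\|,\tau,T),K^{2}(\|u_{\tau}^{2}\|,\tau,T)\}$. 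Thus (\ref{Cond4}) gives $f_{2}^{i}(t,v_{2})-f_{2}^{i}(t,u_{2})\geq 0$ for every $i\in J$, and each summand in (\ref{IneqCond4}) is nonnegative.

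Dropping the (nonnegative) $V$-norm term and the (nonpositive) remaining integral, Gronwall's lemma applied to $\|(u_{1}(t)-u_{2}(t))^{+}\|^{2}$ with initial value $\|(u_{\tau}^{1}-u_{\tau}^{2})^{+}\|^{2}=0$ forces $(u_{1}(t)-u_{2}(t))^{+}\equiv 0$, i.e.\ $u_{1}(t)\leq u_{2}(t)$ for all $t\in[\tau,T]$, completing the proof together with the already established $0\leq u_{j}(t)$.
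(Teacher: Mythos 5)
Your proposal is correct and follows essentially the same route as the paper, which simply says that since the two solutions are nonnegative one repeats the proof of Theorem \ref{Comparison} verbatim; your added observation that $v_{2}=\min\{u_{1},u_{2}\}$ componentwise stays in $\mathbb{R}_{+}^{d}$ (so that the cooperativity condition restricted to the positive cone applies) is exactly the implicit content of that one-line argument.
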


\begin{proof}
As the solutions $u_{1}\left( \text{\textperiodcentered }\right) $, $%
u_{2}\left( \text{\textperiodcentered }\right) $ corresponding to $u_{\tau
}^{1}$ and $u_{\tau }^{2}$ are non-negative, repeating exactly the same
steps of the proof of Theorem \ref{Comparison} we obtain the desired result.
\end{proof}

\begin{remark}
The results remains valid if, instead, $f_{1}$ satisfies (\ref{Cond4}) with
the same $R_{0}.$
\end{remark}

\begin{remark}
If $f_{2}$ satisfies (\ref{Cond4}) for an arbitrary $R_{0}>0$ (that is, in
the whole space $\mathbb{R}^{d}$), then the result is true for any initial
data $0\leq u_{\tau }^{1}\leq u_{\tau }^{2}.$
\end{remark}

We shall need also the following modification of Theorem \ref%
{ComparisonPositive}.

\begin{theorem}
\label{ComparisonPositive2}Let $f_{j},h_{j}$ satisfy (\ref{Cond3}) and (\ref%
{Cond10})-(\ref{Cond11}). Assume that $f_{j},h_{j}$ satisfy (\ref{Cond1})-(%
\ref{Cond2})\ and (\ref{Ineqfh}) for $u\in \mathbb{R}_{+}^{d}$. Let $0\leq
u_{\tau }^{1}\leq u_{\tau }^{2}.$ We suppose that $f_{2}$ satisfies 
\begin{equation}
f_{2}^{i}\left( t,u\right) \geq f_{2}^{i}\left( t,v\right) -\varepsilon ,
\label{Cond4B}
\end{equation}%
for any $t\in \lbrack \tau ,T]$ and any $u,v\in \mathbb{R}_{+}^{d}$ such
that $u^{i}=v^{i}$ and $u^{j}\leq v^{j}$ if $j\neq i,$ and $\left\vert
u\right\vert ,\left\vert v\right\vert \leq R_{0}$ with $R_{0}^{2}\geq 2\max
\{K^{2}\left( \Vert u_{\tau }^{1}\Vert ,\tau ,T\right) ,K^{2}\left( \Vert
u_{\tau }^{2}\Vert ,\tau ,T\right) \}$, where $K\left( \Vert u_{\tau
}^{j}\Vert ,\tau ,T\right) $ is taken from (\ref{Est2})

Then there exists a constant $C\left( \tau ,T\right) $ such that 
\begin{equation*}
\left\Vert \left( u_{1}\left( t\right) -u_{2}\left( t\right) \right)
^{+}\right\Vert \leq C\left( \tau ,T\right) \varepsilon \text{, for all }%
t\in \lbrack \tau ,T],
\end{equation*}%
where $u_{1}\left( \text{\textperiodcentered }\right) ,u_{2}\left( \text{%
\textperiodcentered }\right) $ are the solutions corresponding to $u_{\tau
}^{1}$ and $u_{\tau }^{2}$, respectively.
\end{theorem}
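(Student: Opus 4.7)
The plan is to mimic the energy estimate used in the proof of Theorem~\ref{Comparison} (and reused in Theorem~\ref{ComparisonPositive}), adjusting for the $\varepsilon$-slack in~(\ref{Cond4B}).

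By Lemma~\ref{Unique} the nonnegative solutions $u_1,u_2$ of (\ref{P1})--(\ref{P2}) are uniquely determined, and (\ref{Est2}) gives $\|u_j(t)\|^2\leq K^2(\|u_\tau^j\|,\tau,T)$. Setting $g_2(t,u)=f_2(t,u)+C_3(t)u$ so that $g_{2u}$ is positive semidefinite, and defining $v_2=v_2(u_1,u_2)$ by~(\ref{v2}), one has $0\leq v_2\leq u_2$ componentwise and $\|v_2(t)\|^2\leq \|u_1(t)\|^2+\|u_2(t)\|^2\leq R_0^2$. Testing the difference of (\ref{P1}) and (\ref{P2}) against $(u_1-u_2)^+$, invoking $h_1\leq h_2$ and $f_1^i\geq f_2^i$ from~(\ref{Ineqfh}), applying the Mean Value Theorem to $g_2$ between $v_2$ and $u_1$, and discarding the nonnegative $V$-norm and $g_{2u}$-quadratic terms exactly as in Theorem~\ref{Comparison}, I would obtain
\[
\tfrac{1}{2}\frac{d}{dt}\|(u_1-u_2)^+\|^2
\leq C_3(t)\|(u_1-u_2)^+\|^2
-\int_\Omega \bigl(f_2(t,v_2)-f_2(t,u_2),(u_1-u_2)^+\bigr)\,dx.
\]

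The only new ingredient is the control of this last integral under~(\ref{Cond4B}) rather than~(\ref{Cond4}). Decomposing pointwise on $J(t,x)=\{i:u_1^i(t,x)>u_2^i(t,x)\}$ as in~(\ref{IneqCond4}), for $i\in J$ one has $v_2^i=u_2^i$ and $v_2^j\leq u_2^j$ for $j\neq i$, so~(\ref{Cond4B}) yields $f_2^i(t,v_2)-f_2^i(t,u_2)\geq -\varepsilon$. Using the elementary bound $\sum_{i=1}^d w^i\leq \sqrt{d}\,|w|$ for $w\in\mathbb{R}_+^d$ and then Cauchy--Schwarz in $L^2(\Omega)$,
\[
-\int_\Omega \bigl(f_2(t,v_2)-f_2(t,u_2),(u_1-u_2)^+\bigr)\,dx
\leq \varepsilon\sqrt{d}\,|\Omega|^{1/2}\|(u_1-u_2)^+\|.
\]

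A Young inequality $\varepsilon\sqrt{d}\,|\Omega|^{1/2}\|(u_1-u_2)^+\|\leq\tfrac{1}{2}\|(u_1-u_2)^+\|^2+\tfrac{1}{2}\varepsilon^2 d|\Omega|$ then produces
\[
\frac{d}{dt}\|(u_1-u_2)^+\|^2\leq (2C_3(t)+1)\|(u_1-u_2)^+\|^2+\varepsilon^2 d|\Omega|,
\]
and since $u_\tau^1\leq u_\tau^2$ forces $\|(u_1(\tau)-u_2(\tau))^+\|=0$, Gronwall's lemma yields $\|(u_1(t)-u_2(t))^+\|^2\leq \varepsilon^2 d|\Omega|(T-\tau)\exp\!\left(\int_\tau^T(2C_3(s)+1)\,ds\right)$. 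Taking square roots delivers the claim with $C(\tau,T)=\sqrt{d|\Omega|(T-\tau)}\exp\!\left(\tfrac{1}{2}\int_\tau^T(2C_3(s)+1)\,ds\right)$. I do not anticipate any serious obstacle: beyond the proof of Theorem~\ref{Comparison}, the only new work is carrying the additive $\varepsilon$ through the standard energy-plus-Gronwall argument.
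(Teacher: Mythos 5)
Your proposal is correct and follows essentially the same route as the paper: the same energy inequality from Theorem \ref{Comparison}, the same pointwise decomposition over the index set $J$ to extract $f_2^i(t,v_2)-f_2^i(t,u_2)\geq-\varepsilon$ from (\ref{Cond4B}), and the same Cauchy--Schwarz/Young plus Gronwall conclusion (the paper simply absorbs the constants into a generic $K$ rather than writing $d|\Omega|$ explicitly). No substantive difference.
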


\begin{proof}
Arguing as in the proof of Theorem \ref{Comparison} we obtain the inequality%
\begin{eqnarray*}
&&\frac{1}{2}\frac{d}{dt}\left\Vert \left( u_{1}-u_{2}\right)
^{+}\right\Vert ^{2}+a\left\Vert \left( u_{1}-u_{2}\right) ^{+}\right\Vert
_{V}^{2} \\
&\leq &C_{3}\left( t\right) \left\Vert \left( u_{1}-u_{2}\right)
^{+}\right\Vert ^{2}-\int_{\Omega }\left( f_{2}\left( t,v_{2}\right)
-f_{2}\left( t,u_{2}\right) ,\left( u_{1}-u_{2}\right) ^{+}\right) dx,
\end{eqnarray*}%
where $v_{2}$ is defined in (\ref{v2}).

Using (\ref{IneqCond4}), $v_{2}\leq u_{2},$ $\left\vert v_{2}\right\vert
^{2}\leq \left\vert u_{1}\right\vert ^{2}+\left\vert u_{2}\right\vert ^{2}$,
(\ref{Est2}) and (\ref{Cond4B}) we get%
\begin{equation*}
f_{2}^{i}\left( t,v_{2}\right) -f_{2}^{i}\left( t,u_{2}\right) \geq
-\varepsilon .
\end{equation*}%
Thus%
\begin{eqnarray*}
\frac{d}{dt}\left\Vert \left( u_{1}-u_{2}\right) ^{+}\right\Vert ^{2} &\leq
&2C_{3}\left( t\right) \left\Vert \left( u_{1}-u_{2}\right) ^{+}\right\Vert
^{2}+2\varepsilon \int_{\Omega }\sum_{i\in J}\left(
u_{1}^{i}-u_{2}^{i}\right) ^{+}dx \\
&\leq &\left( 2C_{3}\left( t\right) +1\right) \left\Vert \left(
u_{1}-u_{2}\right) ^{+}\right\Vert ^{2}+K\varepsilon ^{2},
\end{eqnarray*}%
for some constant $K>0$. By Gronwall's lemma we get 
\begin{eqnarray*}
\left\Vert \left( u_{1}\left( t\right) -u_{2}\left( t\right) \right)
^{+}\right\Vert ^{2} &\leq &\left\Vert \left( u_{\tau }^{1}-u_{\tau
}^{2}\right) ^{+}\right\Vert ^{2}e^{\int_{\tau }^{t}\left( 2C_{3}\left(
s\right) +1\right) ds}+K\varepsilon ^{2}\int_{\tau
}^{t}e^{\int_{r}^{t}\left( 2C_{3}\left( s\right) +1\right) ds}dr \\
&\leq &C^{2}\left( \tau ,T\right) \varepsilon ^{2}.
\end{eqnarray*}
\end{proof}

\bigskip

Let us consider now the multivalued case. We will obtain first some
auxiliary statements.

We shall define suitable approximations. For any $n\geq 1$ we put $%
f_{n}^{i}(t,u)=\psi _{n}\left( \left\vert u\right\vert \right) f^{i}\left(
t,u\right) +\left( 1-\psi _{n}\left( \left\vert u\right\vert \right) \right)
g^{i}\left( t,u\right) $, where $g^{i}\left( t,u\right) =\left\vert
u^{i}\right\vert ^{p_{i}-2}u^{i}+f^{i}\left( t,0,...,0\right) $, and $\psi
_{n}$ was defined in (\ref{FiK}). Then $f_{n}\in \mathbb{C}(\mathbb{[\tau }%
,T]\times \mathbb{R}^{d};\mathbb{R}^{d})$ and for any $A>0,$ 
\begin{equation}
\sup\limits_{t\in \lbrack \tau ,T]}\sup\limits_{|u|\leq
A}|f_{n}(t,u)-f(t,u)|\rightarrow 0,\ as\ n\rightarrow \infty .  \label{Convf}
\end{equation}%
We will check first that $f_{n}$ satisfy conditions (\ref{Cond1})-(\ref%
{Cond2}) for $u\in \mathbb{R}_{+}^{d}$, where the constants do not depend on 
$n$.

\begin{lemma}
\label{Aux2}Let $f$ satisfy (\ref{Cond1})-(\ref{Cond2}) for $u\in \mathbb{R}%
_{+}^{d}$. For all $n\geq 1$ we have%
\begin{equation}
\sum_{i=1}^{d}|f_{n}^{i}(t,u)|^{\frac{p_{i}}{p_{i}-1}}\leq
D_{1}(1+\sum_{i=1}^{d}|u^{i}|^{p_{i}}),\ \left( f_{n}(t,u),u\right) \geq
\gamma \sum_{i=1}^{d}|u^{i}|^{p_{i}}-D_{2},  \label{AuxEst4}
\end{equation}%
for $u\in \mathbb{R}_{d}^{+}$, where the positive constants $D_{1}$, $D_{2}$%
, $\gamma $ do not depend on $n$.

if $\left\vert u\right\vert >n+1$, then for any $w\in \mathbb{R}^{d}$ we
have 
\begin{equation}
\left( f_{nu}(t,u)w,w\right) \geq 0.  \label{AuxEst5}
\end{equation}

Moreover, if $f,h$ satisfy (\ref{Cond11}), then $f_{n},h$ also satisfies
this condition.
\end{lemma}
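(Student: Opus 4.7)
The plan is to verify each of the three claims by splitting into the three natural regimes determined by $\psi_n$: the ``inner'' region $|u|\le n$ where $f_n=f$, the ``outer'' region $|u|\ge n+1$ where $f_n=g$, and the ``transition'' region $n\le|u|\le n+1$ where $f_n$ is a convex combination of $f$ and $g$. Throughout I use that $M:=\sup_{t\in[\tau,T]}|f(t,0)|<\infty$ by continuity of $f$ on the compact set $[\tau,T]\times\{0\}$.

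For the growth bound in \eqref{AuxEst4}, in the inner region the claim is just \eqref{Cond1}. In the outer region $f_n^i=|u^i|^{p_i-2}u^i+f^i(t,0)$, and applying the elementary inequality $|a+b|^{p_i/(p_i-1)}\le 2^{1/(p_i-1)}(|a|^{p_i/(p_i-1)}+|b|^{p_i/(p_i-1)})$ together with the bound on $M$ gives $\sum_i|f_n^i|^{p_i/(p_i-1)}\le D'_1(1+\sum_i|u^i|^{p_i})$ with $D'_1$ independent of $n$. In the transition region, since $\psi_n\in[0,1]$ and the estimate is already known at the two endpoints, convexity of $|\cdot|^{p_i/(p_i-1)}$ (or just $(a+b)^{p_i/(p_i-1)}\le 2^{1/(p_i-1)}(a^{p_i/(p_i-1)}+b^{p_i/(p_i-1)})$ applied componentwise) yields the same bound. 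For the dissipation inequality, the inner region is \eqref{Cond2} directly, and in the outer region
\begin{equation*}
(g(t,u),u)=\sum_i|u^i|^{p_i}+\sum_i f^i(t,0)\,u^i\ge\sum_i|u^i|^{p_i}-\sum_i|f^i(t,0)||u^i|,
\end{equation*}
and a Young's inequality $|f^i(t,0)||u^i|\le \tfrac12 |u^i|^{p_i}+C(M,p_i)$ absorbs the cross term, giving $(g(t,u),u)\ge \tfrac12\sum_i|u^i|^{p_i}-D'_2$. In the transition region we again take the convex combination of the inner and outer bounds. Picking $\gamma:=\min\{\alpha,\tfrac12\}$ and $D_2$, $D_1$ as the worst of the three constants produces uniform estimates.

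For \eqref{AuxEst5}, the key observation is that $\psi_n\equiv 0$ on a neighborhood of $\{|u|>n+1\}$, so on that open set $f_n^i(t,u)=g^i(t,u)=|u^i|^{p_i-2}u^i+f^i(t,0)$ depends only on $u^i$ (not on $u^j$ for $j\neq i$). Hence $f_{nu}(t,u)$ is diagonal with entries $(p_i-1)|u^i|^{p_i-2}\ge 0$ (this uses $p_i\ge 2$), so $(f_{nu}(t,u)w,w)=\sum_i(p_i-1)|u^i|^{p_i-2}(w^i)^2\ge 0$.

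For the last statement, fix $i$, set $\tilde u=(u^1,\dots,u^{i-1},0,u^{i+1},\dots,u^d)$ with $u^j\ge 0$ for $j\neq i$, and compute
\begin{equation*}
f_n^i(t,\tilde u)=\psi_n(|\tilde u|)\,f^i(t,\tilde u)+(1-\psi_n(|\tilde u|))\,f^i(t,0,\dots,0),
\end{equation*}
because the $|u^i|^{p_i-2}u^i$ term in $g^i$ vanishes when $u^i=0$. Now \eqref{Cond11} applied to $f$ yields both $h^i(t,x)\ge f^i(t,\tilde u)$ (the stated inequality) and, specializing to $u^j=0$ for all $j\neq i$, $h^i(t,x)\ge f^i(t,0,\dots,0)$. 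Since $f_n^i(t,\tilde u)$ is a convex combination of these two quantities, $h^i(t,x)\ge f_n^i(t,\tilde u)$, which is \eqref{Cond11} for $f_n$.

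The only mildly delicate step is the dissipation bound in the transition region, since one must be sure the constant $\gamma$ does not degenerate as $n\to\infty$; this is handled by the convex combination argument since both endpoints yield uniform lower bounds of the same form.
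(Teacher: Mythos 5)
Your proof is correct and follows essentially the same route as the paper's: write $f_n$ as the convex combination $\psi_n f+(1-\psi_n)g$, use (\ref{Cond1})--(\ref{Cond2}) for the $f$-part and Young's inequality to absorb the $f^i(t,0,\dots,0)u^i$ cross terms for the $g$-part (yielding $\gamma=\min\{\alpha,\tfrac12\}$), note that $f_n=g$ on the open set $|u|>n+1$ so its Jacobian there is diagonal with nonnegative entries, and obtain (\ref{Cond11}) for $f_n$ as a convex combination of the two instances of (\ref{Cond11}) for $f$. The only cosmetic difference is your explicit three-region case split (the paper handles all regions in one computation) and your use of $\sup_t|f(t,0)|<\infty$ by continuity where the paper bounds $\sum_i|f^i(t,0,\dots,0)|^{p_i/(p_i-1)}\le C_1$ directly from (\ref{Cond1}).
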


\begin{proof}
In view of (\ref{Cond1})-(\ref{Cond2}) we get%
\begin{equation*}
\left( f_{n}(t,u),u\right) =\psi _{n}\left( \left\vert u\right\vert \right)
\left( f\left( t,u\right) ,u\right) +\left( 1-\psi _{n}\left( \left\vert
u\right\vert \right) \right) \left( g\left( u\right) ,u\right)
\end{equation*}%
\begin{equation*}
\geq \psi _{n}\left( \left\vert u\right\vert \right) \left( \alpha
\sum_{i=1}^{d}|u^{i}|^{p_{i}}-C_{2}\right) +\left( 1-\psi _{n}\left(
\left\vert u\right\vert \right) \right) \sum_{i=1}^{d}|u^{i}|^{p_{i}}+\left(
1-\psi _{n}\left( \left\vert u\right\vert \right) \right)
\sum_{i=1}^{d}f^{i}\left( t,0,...,0\right) u^{i}
\end{equation*}%
\begin{equation*}
\geq \psi _{n}\left( \left\vert u\right\vert \right) \alpha
\sum_{i=1}^{d}|u^{i}|^{p_{i}}-C_{2}+\left( 1-\psi _{n}\left( \left\vert
u\right\vert \right) \right) \frac{1}{2}\sum_{i=1}^{d}|u^{i}|^{p_{i}}-K_{1}%
\left( 1-\psi _{n}\left( \left\vert u\right\vert \right) \right)
\sum_{i=1}^{d}\left\vert f^{i}\left( t,0,...,0\right) \right\vert ^{\frac{%
p_{i}}{p_{i}-1}}
\end{equation*}%
\begin{equation*}
\geq \widetilde{\alpha }\sum_{i=1}^{d}|u^{i}|^{p_{i}}-C_{2}-K_{1}C_{1},
\end{equation*}%
where $\widetilde{\alpha }=\min \{\frac{1}{2},\alpha \}$, for some constant $%
K_{1}>0$. Also,%
\begin{equation*}
\sum_{i=1}^{d}|f_{n}^{i}(t,u)|^{\frac{p_{i}}{p_{i}-1}}\leq K_{2}\left(
\sum_{i=1}^{d}|f^{i}(t,u)|^{\frac{p_{i}}{p_{i}-1}}+\sum_{i=1}^{d}\left\vert
g^{i}\left( u\right) \right\vert ^{\frac{p_{i}}{p_{i}-1}}\right)
\end{equation*}%
\begin{equation*}
\leq K_{3}\left(
C_{1}(1+\sum_{i=1}^{d}|u^{i}|^{p_{i}})+\sum_{i=1}^{d}|u^{i}|^{p_{i}}+%
\sum_{i=1}^{d}\left\vert f^{i}\left( t,0,...,0\right) \right\vert ^{\frac{%
p_{i}}{p_{i}-1}}\right)
\end{equation*}%
\begin{equation*}
\leq K_{4}\left( \sum_{i=1}^{d}|u^{i}|^{p_{i}}+1\right) ,
\end{equation*}%
for some constant $K_{4}>0.$ Thus, for $D_{1}=K_{4}$, $%
D_{2}=C_{2}+K_{1}C_{1} $, $\widetilde{\alpha }=\min \{\frac{1}{2},\alpha \}$
we have (\ref{AuxEst4}) for the functions $f_{n}$.

Moreover, if $\left\vert u\right\vert >n+1$, then for any $w\in \mathbb{R}%
^{d},$%
\begin{equation}
\left( f_{nu}(t,u)w,w\right) =\left( g_{u}(t,u)w,w\right)
=\sum_{i=1}^{d}\left( p_{i}-1\right) \left\vert u^{i}\right\vert
^{p_{i}-2}w_{i}^{2}\geq 0.  \label{IneqPositive}
\end{equation}

Finally, if (\ref{Cond11}) is satisfied, then%
\begin{align*}
& h^{i}\left( t,x\right) -f_{n}^{i}\left( t,u\right) \\
& =\psi _{n}\left( \left\vert u\right\vert \right) \left( h^{i}\left(
t,x\right) -f^{i}\left( t,u\right) \right) +\left( 1-\psi _{n}\left(
\left\vert u\right\vert \right) \right) \left( h^{i}\left( t,x\right)
-f^{i}\left( t,0,...,0\right) \right) \geq 0,
\end{align*}%
for all $i$, a.e. $\left( t,x\right) \in \left( \tau ,T\right) \times \Omega 
$ and $u$ such that $u^{i}=0$ and $u^{j}\geq 0$ if $j\neq i$.
\end{proof}

\bigskip

Let $2\leq q_{i}\leq p_{i}$, $i=1,...,d$. We define also the following
approximations $\widetilde{f}_{n}^{i}(t,u)=\psi _{n}\left( \left\vert
u\right\vert \right) f^{i}\left( t,u\right) +\left( 1-\psi _{n}\left(
\left\vert u\right\vert \right) \right) \widetilde{g}^{i}\left( t,u\right) $%
, where $\widetilde{g}^{i}\left( t,u\right) =\left\vert u^{i}\right\vert
^{p_{i}-2}u^{i}+\left\vert u^{i}\right\vert ^{q_{i}-2}u^{i}+f^{i}\left(
t,0,...,0\right) $. Then (\ref{Convf}) holds. We check that $\widetilde{f}%
_{n}$ satisfy conditions (\ref{Cond1})-(\ref{Cond2}) for $u\in \mathbb{R}%
_{+}^{d}$, where the constants do not depend on $n$.

\begin{lemma}
\label{Aux2B}Let $f$ satisfy (\ref{Cond1})-(\ref{Cond2}) for $u\in \mathbb{R}%
_{+}^{d}$. For all $n\geq 1$ we have%
\begin{equation}
\sum_{i=1}^{d}|\widetilde{f}_{n}^{i}(t,u)|^{\frac{p_{i}}{p_{i}-1}}\leq
D_{1}(1+\sum_{i=1}^{d}|u^{i}|^{p_{i}}),\ \left( \widetilde{f}%
_{n}(t,u),u\right) \geq \gamma \sum_{i=1}^{d}|u^{i}|^{p_{i}}-D_{2},
\label{AuxEst4B}
\end{equation}%
for $u\in \mathbb{R}_{d}^{+}$, where the positive constants $D_{1}$, $D_{2}$%
, $\gamma $ do not depend on $n$.

if $\left\vert u\right\vert >n+1$, then for any $w\in \mathbb{R}^{d}$ we
have 
\begin{equation}
\left( \widetilde{f}_{nu}(t,u)w,w\right) \geq 0.  \label{AuxEst5B}
\end{equation}

Moreover, if $f,h$ satisfy (\ref{Cond11}), then $\widetilde{f}_{n},h$ also
satisfies this condition.
\end{lemma}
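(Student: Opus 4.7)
The proof will follow exactly the structure of Lemma~\ref{Aux2}, observing that the only difference in the definition of $\widetilde{f}_n$ is the additional summand $|u^i|^{q_i-2}u^i$ inside $\widetilde{g}^i$, and that by hypothesis $2\leq q_i\leq p_i$. The strategy is therefore to show that this extra term is harmless for each of the three required properties: it is dominated by $|u^i|^{p_i}$ for growth purposes, it contributes a non-negative amount to the dissipation inner product, it contributes a non-negative diagonal term to the Jacobian, and it vanishes at $u^i=0$ so it does not disturb the sign condition (\ref{Cond11}).

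For the growth bound in (\ref{AuxEst4B}), I would estimate $|\widetilde{g}^i(t,u)|^{p_i/(p_i-1)}$ by splitting it into three pieces using the elementary inequality $(a+b+c)^r\leq K(a^r+b^r+c^r)$. The summand $|u^i|^{p_i-2}u^i$ raised to $p_i/(p_i-1)$ gives exactly $|u^i|^{p_i}$; the term $f^i(t,0,\ldots,0)$ raised to $p_i/(p_i-1)$ is uniformly bounded using (\ref{Cond1}) applied at $u=0$; and for $|u^i|^{q_i-2}u^i$ raised to $p_i/(p_i-1)$ we obtain $|u^i|^{(q_i-1)p_i/(p_i-1)}$. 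Since $q_i\leq p_i$ yields $(q_i-1)p_i/(p_i-1)\leq p_i$, this quantity is bounded by $1+|u^i|^{p_i}$. Combining with the bound from (\ref{Cond1}) for the piece $\psi_n f^i$ and collecting constants gives the desired estimate with $D_1$ independent of $n$.

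For the dissipation inequality, I compute $(\widetilde{f}_n(t,u),u)=\psi_n (f(t,u),u)+(1-\psi_n)(\widetilde{g}(t,u),u)$. Observe that $(\widetilde{g}(t,u),u)=\sum_i|u^i|^{p_i}+\sum_i|u^i|^{q_i}+\sum_i f^i(t,0)u^i$, where the middle sum is non-negative and hence can be discarded; the last sum is absorbed by Young's inequality as in the proof of Lemma~\ref{Aux2} at the cost of an additive constant involving $C_1$. Together with (\ref{Cond2}) applied on the set where $\psi_n=1$, this yields the lower bound $\gamma\sum_i|u^i|^{p_i}-D_2$ with $\gamma=\min\{1/2,\alpha\}$ and $D_2$ independent of $n$, exactly as before. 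For (\ref{AuxEst5B}), when $|u|>n+1$ we have $\psi_n(|u|)=0$ in a neighborhood, so $\widetilde{f}_{nu}=\widetilde{g}_u$ is the diagonal matrix with entries $(p_i-1)|u^i|^{p_i-2}+(q_i-1)|u^i|^{q_i-2}\geq 0$, giving a non-negative quadratic form. Finally, the verification of (\ref{Cond11}) is immediate: decomposing $h^i-\widetilde{f}_n^i$ via the partition of unity $\psi_n+(1-\psi_n)$, the first piece is non-negative by the hypothesis on $f,h$, and for the second piece, evaluating $\widetilde{g}^i$ at a point $u$ with $u^i=0$ kills the two power terms and leaves exactly $f^i(t,0,\ldots,0)$, so $h^i-\widetilde{g}^i=h^i-f^i(t,0,\ldots,0)\geq 0$ again by (\ref{Cond11}).

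There is no serious obstacle here; the lemma is a routine variant of Lemma~\ref{Aux2}, and the one point that requires a moment of care is checking that the added term $|u^i|^{q_i-2}u^i$ does not destroy the dual growth condition, which is where the restriction $q_i\leq p_i$ enters in the exponent calculation $(q_i-1)p_i/(p_i-1)\leq p_i$.
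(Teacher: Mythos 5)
Your proposal is correct and follows essentially the same route as the paper: both reduce the lemma to the computation of Lemma \ref{Aux2}, check that the added term $|u^i|^{q_i-2}u^i$ only contributes the non-negative quantity $(1-\psi_n)\sum_i|u^i|^{q_i}$ to the dissipation pairing, and use $q_i\le p_i$ to control the dual-exponent growth $|u^i|^{(q_i-1)p_i/(p_i-1)}\le 1+|u^i|^{p_i}$ (the paper phrases this via $\frac{p_i}{p_i-1}\le\frac{q_i}{q_i-1}$, which is the same estimate). The verifications of (\ref{AuxEst5B}) and (\ref{Cond11}) also match the paper's, which simply refers back to Lemma \ref{Aux2}.
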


\begin{proof}
In view of (\ref{AuxEst4}) we have%
\begin{equation*}
\left( \widetilde{f}_{n}(t,u),u\right) =\left( f_{n}(t,u),u\right) +\left(
1-\psi _{n}\left( \left\vert u\right\vert \right) \right)
\sum_{i=1}^{d}\left\vert u^{i}\right\vert ^{q_{i}}\geq \gamma
\sum_{i=1}^{d}|u^{i}|^{p_{i}}-D_{2},
\end{equation*}%
\begin{eqnarray*}
\sum_{i=1}^{d}|\widetilde{f}_{n}^{i}(t,u)|^{\frac{p_{i}}{p_{i}-1}} &\leq
&K_{1}\left( \sum_{i=1}^{d}|f_{n}^{i}(t,u)|^{\frac{p_{i}}{p_{i}-1}%
}+\sum_{i=1}^{d}|u^{i}|^{\frac{p_{i}(q_{i}-1)}{p_{i}-1}}\right) \\
&\leq &K_{1}\left(
D_{1}(1+\sum_{i=1}^{d}|u^{i}|^{p_{i}})+\sum_{i=1}^{d}|u^{i}|^{q_{i}}\right)
\leq K_{2}\left( 1+\sum_{i=1}^{d}|u^{i}|^{p_{i}}\right) ,
\end{eqnarray*}%
where we have used that $p_{i}\geq q_{i}$ implies $\frac{p_{i}}{p_{i}-1}\leq 
\frac{q_{i}}{q_{i}-1}$. Finally, (\ref{AuxEst5B}) and condition (\ref{Cond11}%
) are proved in the same way as in Lemma \ref{Aux2}.
\end{proof}

\bigskip

For every $n\geq 1$ consider the sequence $f_{n}^{\varepsilon }\left(
t,u\right) $ defined by $f_{n}^{\epsilon }(t,u)=\int_{\mathbb{R}^{d}}\rho
_{\epsilon }(s)b_{n}(t,u-s)ds$, where either $b_{n}=f_{n}$ or $b_{n}=%
\widetilde{f}_{n}$, defined before. Since any $b_{n}$ are uniformly
continuous on $[\tau ,T]\times \lbrack -k-1,k+1]$, for any $k\geq 1$, there
exist $\epsilon _{k,n}\in (0,1)$ such that for all $u$ satisfying $|u|\leq k$%
, and for all $s$ for which $|u-s|<\epsilon _{k,n}$ we have 
\begin{equation*}
\sup\limits_{t\in \lbrack \tau ,T]}|b_{n}(t,u)-b_{n}(t,s)|\leq \frac{1}{k}.
\end{equation*}%
We put $f_{n}^{k}(t,u)=f_{n}^{\epsilon _{k,n}}(t,u)$. Then $%
f_{n}^{k}(t,\cdot )\in \mathbb{C}^{\infty }(\mathbb{R}^{d};\mathbb{R}^{d})$,
for all $t\in \lbrack \tau ,T],\ k,n\geq 1$. Since for any compact subset $%
A\subset \mathbb{R}^{d}$ and any $n$ we have $f_{n}^{k}\rightarrow b_{n}$
uniformly on $[\tau ,T]\times A,$ we obtain the existence of a sequence $%
\delta _{nk}\in \left( 0,1\right) $ such that $\delta _{nk}\rightarrow 0$,
as $k\rightarrow \infty $, and $\left\vert f_{n}^{ki}\left( t,u\right)
-b_{n}^{i}\left( t,u\right) \right\vert \leq \delta _{nk},$ for any $i,$ $n$
and any $u$ satisfying $\left\vert u\right\vert \leq n+2$. We define the
function $F_{n}^{k}=\left( F_{n}^{k1},...,F_{n}^{kd}\right) $ given by 
\begin{equation}
F_{n}^{ki}\left( t,u\right) =f_{n}^{ki}\left( t,u\right) -p\delta _{nk},
\label{Fn}
\end{equation}%
where $p\in \mathbb{N}.$

\begin{lemma}
\label{Aux3}Let $f$ satisfy (\ref{Cond1})-(\ref{Cond2}) for $u\in \mathbb{R}%
_{+}^{d}$. For all $n,k\geq 1$ we have%
\begin{equation}
\sum_{i=1}^{d}|F_{n}^{ki}(t,u)|^{\frac{p_{i}}{p_{i}-1}}\leq
D_{3}(1+\sum_{i=1}^{d}|u^{i}|^{p_{i}}),\ \left( F_{n}^{k}(t,u),u\right) \geq
\nu \sum_{i=1}^{d}|u^{i}|^{p_{i}}-D_{4},  \label{AuxEst6}
\end{equation}%
for $u\in \mathbb{R}_{+}^{d}$, where the positive constants $D_{3}$, $D_{4},$
$\nu $ do not depend neither on $n$ nor $k$.

Moreover, if $f,h$ satisfy (\ref{Cond11}), then $F_{n}^{k},h$ also satisfy
this condition if $\left\vert u\right\vert \leq n+2$.
\end{lemma}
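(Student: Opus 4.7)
The plan is to establish each of the three conclusions by transferring the corresponding property of $b_n$ (which is either $f_n$ or $\widetilde{f}_n$, with estimates from Lemmas \ref{Aux2} and \ref{Aux2B}) through the mollification step, and then accounting for the harmless scalar subtraction $p\delta_{nk}$. Throughout I would use that $\delta_{nk}\in(0,1)$, so $p\delta_{nk}\leq p$ is a constant uniform in $n,k$, and that $\epsilon_{k,n}<1$ so the mollification only perturbs arguments by less than $1$.

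For the growth inequality, I would apply Jensen's inequality to the integral representation $f_n^{ki}(t,u)=\int_{\mathbb{R}^d}\rho_{\epsilon_{k,n}}(s)b_n^i(t,u-s)\,ds$, noting that $\rho_{\epsilon_{k,n}}$ is a probability density and $t\mapsto|t|^{p_i/(p_i-1)}$ is convex, to get $|f_n^{ki}(t,u)|^{p_i/(p_i-1)}\leq\int\rho_{\epsilon_{k,n}}(s)|b_n^i(t,u-s)|^{p_i/(p_i-1)}\,ds$. Plugging in the first part of (\ref{AuxEst4})/(\ref{AuxEst4B}) and using $|u^i-s^i|^{p_i}\leq 2^{p_i-1}(|u^i|^{p_i}+1)$ for $|s|\leq 1$, I obtain the growth bound for $f_n^k$. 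Finally, the elementary inequality $|F_n^{ki}|^{p_i/(p_i-1)}\leq 2^{p_i/(p_i-1)-1}(|f_n^{ki}|^{p_i/(p_i-1)}+p^{p_i/(p_i-1)})$ gives the claim with $D_3$ independent of $n,k$.

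For the coercivity, I would decompose $u=(u-s)+s$ inside the integrand:
\begin{equation*}
(f_n^k(t,u),u)=\int\rho_{\epsilon_{k,n}}(s)\bigl[(b_n(t,u-s),u-s)+(b_n(t,u-s),s)\bigr]ds.
\end{equation*}
The first inner product is bounded below by $\gamma\sum_i|u^i-s^i|^{p_i}-D_2$ via (\ref{AuxEst4})/(\ref{AuxEst4B}), and since $|s|\leq 1$, $|u^i-s^i|^{p_i}\geq 2^{1-p_i}|u^i|^{p_i}-C$. The second inner product is estimated by Cauchy-Schwarz and the growth bound on $b_n$, then Young's inequality absorbs it into a small fraction of $\sum_i|u^i|^{p_i}$ plus an additive constant. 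Finally $(F_n^k(t,u),u)=(f_n^k(t,u),u)-p\delta_{nk}\sum_iu^i$, and the term $p\delta_{nk}\sum u^i\leq p\sum u^i$ is again absorbed via Young, giving the coercivity with $\nu>0$ and $D_4$ uniform in $n,k$.

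For the sign condition (\ref{Cond11}) on $\{|u|\leq n+2\}$, I would use the defining property $|f_n^{ki}(t,u)-b_n^i(t,u)|\leq\delta_{nk}$ for $|u|\leq n+2$. When $u^i=0$ and $u^j\geq 0$ for $j\neq i$, Lemmas \ref{Aux2}/\ref{Aux2B} give $h^i(t,x)-b_n^i(t,u)\geq 0$, and then
\begin{equation*}
h^i(t,x)-F_n^{ki}(t,u)=\bigl(h^i(t,x)-b_n^i(t,u)\bigr)+\bigl(b_n^i(t,u)-f_n^{ki}(t,u)\bigr)+p\delta_{nk}\geq(p-1)\delta_{nk}\geq 0
\end{equation*}
since $p\geq 1$. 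The main obstacle I anticipate is the uniformity in $n$ of the growth and coercivity constants: the bounds (\ref{AuxEst4})/(\ref{AuxEst4B}) hold only on $\mathbb{R}_+^d$, whereas mollification pushes arguments slightly outside the cone. This is resolved by exploiting the explicit form of $b_n$: for $|u-s|>n+1$, $b_n$ equals the polynomial $g$ or $\widetilde{g}$, which satisfies the growth and coercivity bounds on all of $\mathbb{R}^d$ with constants depending only on $p_i$ and $\|f(\cdot,0)\|$; on the remaining bounded set, one uses continuity together with $|u|\leq n+2$, which is absorbed into $1+\sum|u^i|^{p_i}$.
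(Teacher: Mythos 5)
Your argument reproduces the paper's proof essentially verbatim: Jensen's (H\"older's) inequality through the mollifier for the growth bound, the decomposition $u=(u-s)+s$ together with Young's inequality for the coercivity (including the absorption of $-p\delta_{nk}\sum u^{i}$), and the bookkeeping $h^{i}-F_{n}^{ki}=h^{i}-f_{n}^{ki}+p\delta_{nk}\geq h^{i}-b_{n}^{i}+(p-1)\delta_{nk}\geq 0$ for condition (\ref{Cond11}). The only divergence is your closing paragraph: you are right that mollification evaluates $b_{n}$ slightly outside $\mathbb{R}_{+}^{d}$ (a point the paper passes over in silence, simply applying (\ref{AuxEst4})--(\ref{AuxEst4B}) at $u-s$), but your patch for the intermediate region $|u-s|\leq n+1$ --- ``continuity together with $|u|\leq n+2$'' --- would yield constants depending on $n$, so as stated it does not by itself restore the claimed uniformity.
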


\begin{proof}
Since $f_{n}$ satisfy (\ref{AuxEst4}) and $\widetilde{f}_{n}$ satisfies (\ref%
{AuxEst4B}), we have%
\begin{equation*}
\sum_{i=1}^{d}|F_{n}^{ki}(t,u)|^{\frac{p_{i}}{p_{i}-1}}\leq R_{1}\left(
\sum_{i=1}^{d}|f_{n}^{ki}(t,u)|^{\frac{p_{i}}{p_{i}-1}}+1\right)
\end{equation*}%
\begin{equation*}
\leq R_{1}\left( \sum_{i=1}^{d}\left( \left( \int_{\mathbb{R}^{d}}\rho
_{\epsilon _{k}}(s)ds\right) ^{\frac{1}{p_{i}-1}}\int_{\mathbb{R}^{d}}\rho
_{\epsilon _{k}}(s)\left\vert b_{n}^{i}(t,u-s)\right\vert ^{\frac{p_{i}}{%
p_{i}-1}}ds\right) +1\right)
\end{equation*}%
\begin{equation*}
\leq R_{2}\left( \sum_{i=1}^{d}\int_{\mathbb{R}^{d}}\rho _{\epsilon
_{k}}(s)(1+|u^{i}-s^{i}|^{p_{i}})ds+1\right) \leq R_{3}\left(
\sum_{i=1}^{d}\int_{\mathbb{R}^{d}}\rho _{\epsilon
_{k}}(s)(|u^{i}|^{p_{i}}+\epsilon _{k}^{p_{i}})ds+1\right)
\end{equation*}%
\begin{equation*}
\leq R_{4}(1+\sum_{i=1}^{d}|u^{i}|^{p_{i}}),
\end{equation*}%
for some constant $R_{4}>0.$

On the other hand, 
\begin{equation*}
\left( F_{n}^{k}(t,u),u\right) =\int_{\mathbb{R}^{d}}\rho _{\epsilon
_{k}}(s)(b_{n}(t,u-s),u-s)ds+\int_{\mathbb{R}^{d}}\rho _{\epsilon
_{k}}(s)\left( b_{n}(t,u-s),s\right) ds-p\delta _{nk}\sum_{i=1}^{n}u^{i}
\end{equation*}%
\begin{equation*}
\geq \int_{\mathbb{R}^{d}}\rho _{\epsilon _{k}}(s)(\gamma
\sum_{i=1}^{d}\left\vert u^{i}-s^{i}\right\vert ^{p_{i}}-D_{2})ds-\int_{%
\mathbb{R}^{d}}\rho _{\epsilon _{k}}(s)\sum_{i=1}^{d}\left( \frac{\gamma }{%
2D_{1}}\left\vert b_{n}^{i}\left( t,u-s\right) \right\vert ^{\frac{p_{i}}{%
p_{i}-1}}+R_{5}\left\vert s^{i}\right\vert ^{p_{i}}\right) ds
\end{equation*}%
\begin{equation*}
-p\delta _{nk}\sum_{i=1}^{n}u^{i}\geq \int_{\mathbb{R}^{d}}\rho _{\epsilon
_{k}}(s)(\gamma \sum_{i=1}^{d}\left\vert u^{i}-s^{i}\right\vert
^{p_{i}}-D_{2})ds-\int_{\mathbb{R}^{d}}\rho _{\epsilon _{k}}(s)\left( \frac{%
\gamma }{2}\sum_{i=1}^{d}|u^{i}-s^{i}|^{p_{i}}+R_{6}\right) ds
\end{equation*}%
\begin{equation*}
-p\delta _{nk}\sum_{i=1}^{n}u^{i}\geq \frac{\gamma }{2}\int_{\mathbb{R}%
^{d}}\rho _{\epsilon _{k}}(s)\sum_{i=1}^{d}\left\vert u^{i}-s^{i}\right\vert
^{p_{i}}ds-R_{7}-p\delta _{nk}\sum_{i=1}^{n}u^{i}\geq \nu
\sum_{i=1}^{d}\left\vert u^{i}\right\vert ^{p_{i}}-R_{8},
\end{equation*}%
for some constants $\nu ,R_{8}>0$, where in the last inequality we have used
that for some $D>0,$%
\begin{equation*}
\left\vert u^{i}\right\vert ^{p_{i}}=\left\vert u^{i}-s^{i}+s^{i}\right\vert
^{p_{i}}\leq D\left( \left\vert u^{i}-s^{i}\right\vert ^{p_{i}}+\left\vert
s^{i}\right\vert ^{p_{i}}\right) \leq D\left( \left\vert
u^{i}-s^{i}\right\vert ^{p_{i}}+\epsilon _{k}^{p_{i}}\right) .
\end{equation*}%
Hence, (\ref{AuxEst6}) holds.

In view of Lemmas \ref{Aux2}, \ref{Aux2B} the functions $b_{n},h$ satisfy (%
\ref{Cond11}). Hence, $\left\vert f_{n}^{ki}\left( t,u\right)
-b_{n}^{i}\left( t,u\right) \right\vert \leq \delta _{nk},$ for any $i,$ $n$
and any $u$ satisfying $\left\vert u\right\vert \leq n+2,$ implies that%
\begin{eqnarray*}
h^{i}\left( t,x\right) -F_{n}^{ki}\left( t,u\right) &=&h^{i}\left(
t,x\right) -f_{n}^{ki}\left( t,u\right) +p\delta _{nk} \\
&\geq &h^{i}\left( t,x\right) -b_{n}^{i}\left( t,u\right) \geq 0,
\end{eqnarray*}%
for $u$ such that $u_{i}=0$, $u_{j}\geq 0$, $j\neq i$, and $\left\vert
u\right\vert \leq n+2$.
\end{proof}

\bigskip

Define a smooth function $\phi _{n}:\mathbb{R}_{+}\rightarrow \lbrack 0,1]$
satisfying%
\begin{equation*}
\phi _{n}\left( s\right) =\left\{ 
\begin{array}{c}
1\text{, if }0\leq s\leq n+1+\overline{\gamma }, \\ 
0\leq \phi _{n}\left( s\right) \leq 1\text{, if }n+1+\overline{\gamma }\leq
s\leq n+2, \\ 
0\text{, if }s\geq n+2,%
\end{array}%
\right.
\end{equation*}%
where $0<\overline{\gamma }<1$ is fixed. Let $l_{n}^{k}(t,u)$ be given by 
\begin{equation*}
l_{n}^{k}(t,u)=\phi _{n}\left( \left\vert u\right\vert \right)
F_{n}^{k}\left( t,u\right) +\left( 1-\phi _{n}\left( \left\vert u\right\vert
\right) \right) b_{n}\left( t,u\right) .
\end{equation*}%
Since for any compact subset $A\subset \mathbb{R}^{d}$ and any $n$ we have $%
f_{n}^{k}\rightarrow b_{n}$ uniformly on $[\tau ,T]\times A$ as $%
k\rightarrow \infty $ it is clear that 
\begin{equation*}
\sup\limits_{t\in \lbrack \tau ,T]}\sup\limits_{|u|\leq
A}|l_{n}^{k}(t,u)-b_{n}(t,u)|\rightarrow 0,\ as\ k\rightarrow \infty .
\end{equation*}

\begin{lemma}
\label{Aux4}Let $f$ satisfy (\ref{Cond1})-(\ref{Cond2}) for $u\in \mathbb{R}%
_{+}^{d}$. For all $n,k\geq 1$ we have%
\begin{equation}
\sum_{i=1}^{d}|l_{n}^{ki}(t,u)|^{\frac{p_{i}}{p_{i}-1}}\leq
D_{5}(1+\sum_{i=1}^{d}|u^{i}|^{p_{i}}),\ \left( l_{n}^{k}(t,u),u\right) \geq
\lambda \sum_{i=1}^{d}|u^{i}|^{p_{i}}-D_{6},  \label{AuxEst7}
\end{equation}%
for $u\in \mathbb{R}_{+}^{d}$, where the positive constants $D_{5}$, $D_{6},$
$\lambda $ do not depend neither on $n$ nor $k$. Also, 
\begin{equation}
\left( l_{nu}^{k}(t,u)w,w\right) \geq -D_{7}(k,n)\left\vert w\right\vert
^{2},\text{ }\forall u,w,  \label{AuxEst8}
\end{equation}%
where $D_{7}(k,n)$ is a non-negative number.

Moreover, if $f,h$ satisfy (\ref{Cond11}), then $l_{n}^{k},h$ also satisfies
this condition.
\end{lemma}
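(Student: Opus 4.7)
The plan is to treat $l_n^k$ as a convex combination of $F_n^k$ and $b_n$ and to transfer to it the analogous properties proved in Lemmas \ref{Aux2}--\ref{Aux3}. Two structural observations drive everything: on the compact set $\{|u|\le n+2\}$, every function in sight ($\phi_n(|u|)$, $F_n^k$, $b_n$) is smooth with derivatives bounded by constants depending only on $k$ and $n$; and on $\{|u|>n+2\}$ we have $\phi_n(|u|)\equiv 0$, so that $l_n^k$ coincides there with $b_n$.

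For the growth estimate in (\ref{AuxEst7}), using $\phi_n\in[0,1]$ and the elementary inequality $(a+b)^{p_i/(p_i-1)}\le K(a^{p_i/(p_i-1)}+b^{p_i/(p_i-1)})$, I would dominate $|l_n^{ki}|^{p_i/(p_i-1)}$ by the corresponding quantity for $F_n^k$ (controlled by Lemma \ref{Aux3}) plus that for $b_n$ (controlled by Lemma \ref{Aux2} or \ref{Aux2B}). For the coercivity bound I would split
\[
(l_n^k(t,u),u)=\phi_n(|u|)(F_n^k(t,u),u)+(1-\phi_n(|u|))(b_n(t,u),u),
\]
apply (\ref{AuxEst6}) to the first summand and (\ref{AuxEst4}) or (\ref{AuxEst4B}) to the second, and collect constants via $\lambda=\min\{\nu,\gamma\}$ and $D_6=\max\{D_2,D_4\}$.

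The Jacobian bound (\ref{AuxEst8}) is the step that requires the most care, but it decomposes cleanly into two cases. On $\{|u|\le n+2\}$, the cutoff $\phi_n(|u|)$ is smooth because $\phi_n\equiv 1$ near the origin (so the non-smoothness of $|u|$ at $u=0$ is irrelevant), while $|u|$ is smooth on the annulus where $\phi_n'\ne 0$. Combined with the fact that $F_n^k$ is $C^\infty$ in $u$ as a translated mollification and that $b_n$ is $C^1$ with bounded derivatives on compact sets, this implies that the operator norm of $l_{nu}^k$ is bounded on $\{|u|\le n+2\}$ by some constant $D_7(k,n)$, giving the one-sided estimate. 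On $\{|u|>n+2\}$, $\phi_n$ and $\phi_n'$ both vanish, so $l_n^k\equiv b_n$; since $|u|>n+1$ there, Lemmas \ref{Aux2} and \ref{Aux2B} supply $(b_{nu}(t,u)w,w)\ge 0$, which is stronger than needed.

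Finally, for (\ref{Cond11}), at a point $u$ with $u^i=0$ and $u^j\ge 0$ for $j\ne i$, I would write
\[
h^i(t,x)-l_n^{ki}(t,u)=\phi_n(|u|)\bigl(h^i(t,x)-F_n^{ki}(t,u)\bigr)+(1-\phi_n(|u|))\bigl(h^i(t,x)-b_n^i(t,u)\bigr),
\]
and observe that wherever $\phi_n(|u|)>0$ we have $|u|\le n+2$, so Lemma \ref{Aux3} forces the first parenthesis to be nonnegative; Lemmas \ref{Aux2}--\ref{Aux2B} give nonnegativity of the second parenthesis unconditionally. The nonnegativity of the convex combination follows, concluding the plan.
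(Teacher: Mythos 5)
Your treatment of (\ref{AuxEst7}) and of condition (\ref{Cond11}) matches the paper's argument exactly (the paper in fact dismisses (\ref{AuxEst7}) as an easy consequence of (\ref{AuxEst4}), (\ref{AuxEst4B}) and (\ref{AuxEst6}), and proves (\ref{Cond11}) by precisely your convex-combination split). The issue is in your justification of (\ref{AuxEst8}). Your opening structural observation — that on $\{|u|\le n+2\}$ ``every function in sight,'' including $b_n$, is smooth (or $C^1$) with derivatives bounded in terms of $k$ and $n$ — is false: $b_n$ equals $\psi_n(|u|)f(t,u)+(1-\psi_n(|u|))g(t,u)$ (resp.\ $\widetilde g$), and $f$ is only assumed to be \emph{jointly continuous}, not differentiable in $u$. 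If $b_n$ were $C^1$ on compacts, the entire mollification producing $F_n^k$ would be unnecessary. As stated, your Case 1 argument therefore does not go through.

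The conclusion is nevertheless rescued by a point your write-up does not make explicit, and which is the reason the paper splits $\{|u|\le n+2\}$ into the two subcases $|u|\le n+1+\overline{\gamma}$ and $n+1+\overline{\gamma}<|u|<n+2$: the coefficient $1-\phi_n(|u|)$ multiplying $b_n$ in $l_n^k$ vanishes identically on $\{|u|\le n+1+\overline{\gamma}\}$, and on its support $\{|u|\ge n+1+\overline{\gamma}\}\subset\{|u|\ge n+1\}$ one has $\psi_n(|u|)=0$, so there $b_n$ reduces to the polynomial part $g$ (resp.\ $\widetilde g$), which \emph{is} $C^1$ with derivatives bounded on the annulus. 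Thus on $\{|u|\le n+1+\overline{\gamma}\}$ one has $l_n^k=F_n^k$ and the bound comes from the mollifier (the paper estimates $|(f_{nu}^k(t,u)w,w)|\le |w|^2\int|\nabla\rho_{\epsilon_k}(u-s)|\,|b_n(t,s)|\,ds$), while on the annulus the product rule produces the extra terms in $\partial\phi_n/\partial u_j$ times $F_n^{ki}$ and $b_n^i$, all bounded there. Your Case 2 ($|u|>n+2$, where $l_n^k=b_n$ and (\ref{AuxEst5}), (\ref{AuxEst5B}) give nonnegativity) is correct. So the gap is a single missing observation rather than a wrong strategy, but as written the key differentiability claim would fail.
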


\begin{proof}
The inequalities given in (\ref{AuxEst7}) are an easy consequence of (\ref%
{AuxEst4}), (\ref{AuxEst4B}) and (\ref{AuxEst6}).

On the other hand, if $u$ is such that $u_{i}=0$, $u_{j}\geq 0$, $j\neq i,$
then in view of Lemmas \ref{Aux2}, \ref{Aux2B} and \ref{Aux3} we have 
\begin{equation*}
\phi _{n}\left( \left\vert u\right\vert \right) \left( h^{i}\left(
t,x\right) -F_{n}^{ki}\left( t,u\right) \right) \geq 0,\ \left( 1-\phi
_{n}\left( \left\vert u\right\vert \right) \right) \left( h^{i}\left(
t,x\right) -b_{n}^{i}\left( t,u\right) \right) \geq 0,
\end{equation*}%
as $\phi _{n}\left( \left\vert u\right\vert \right) =0,$ for $\left\vert
u\right\vert \geq n+2$. Hence, $h^{i}\left( t,x\right) -l_{n}^{ki}\left(
t,u\right) \geq 0$, so that condition (\ref{Cond11}) holds.

It is also clear that $l_{n}^{k}\left( t,u\right) $ is continuously
differentiable with respect to $u$ for any $t$ and $u$. We obtain the
existence of $D_{7}\left( k,n\right) $ such that (\ref{AuxEst8}) holds.
Indeed, if $\left\vert u\right\vert \leq n+1+\overline{\gamma }$, then $%
l_{n}^{k}(t,u)=F_{n}^{k}\left( t,u\right) $, so that%
\begin{eqnarray}
\left\vert (l_{nu}^{k}(t,u)w,w)\right\vert &=&\left\vert
(F_{nu}^{k}(t,u)w,w)\right\vert =\left\vert (f_{nu}^{k}(t,u)w,w)\right\vert
\label{IneqDeriv} \\
&\leq &\left\vert w\right\vert ^{2}\int_{\mathbb{R}^{d}}\left\vert \nabla
\rho _{\epsilon _{k}}\left( u-s\right) \right\vert \left\vert b_{n}\left(
s,u\right) \right\vert ds\leq R_{1}\left( k,n\right) \left\vert w\right\vert
^{2}.  \notag
\end{eqnarray}

If $\left\vert u\right\vert \geq n+2$, then $l_{n}^{k}(t,u)=b_{n}\left(
t,u\right) $, so that by (\ref{AuxEst5}), (\ref{AuxEst5B}) we have $%
(l_{nu}^{k}(t,u)w,w)=(b_{nu}(t,u)w,w)\geq 0.$ Finally, if $n+1+\overline{%
\gamma }<\left\vert u\right\vert <n+2,$ we have%
\begin{align*}
(l_{nu}^{k}(t,u)w,w)& =\phi _{n}\left( \left\vert u\right\vert \right)
(F_{nu}^{k}(t,u)w,w)+\left( 1-\phi _{n}\left( \left\vert u\right\vert
\right) \right) (b_{nu}(t,u)w,w) \\
& +\sum_{i,j=1}^{d}\frac{\partial }{\partial u_{j}}\phi _{n}\left(
\left\vert u\right\vert \right) F_{n}^{ki}(t,u)w_{i}w_{j}-\sum_{i,j=1}^{d}%
\frac{\partial }{\partial u_{j}}\phi _{n}\left( \left\vert u\right\vert
\right) b_{n}^{i}(t,u)w_{i}w_{j} \\
& \geq -R_{2}\left( k,n\right) \left\vert w\right\vert ^{2},
\end{align*}%
where we have used similar arguments as in (\ref{IneqDeriv}), (\ref{AuxEst5}%
), (\ref{AuxEst5B}) and also that 
\begin{equation*}
\left\vert \frac{\partial }{\partial u_{j}}\phi _{n}\left( \left\vert
u\right\vert \right) \right\vert \leq R_{3}\left( n\right) ,\left\vert
b_{n}^{i}(t,u)\right\vert \leq R_{4}\left( n\right) ,\left\vert
F_{n}^{ki}(t,u)\right\vert \leq R_{5}\left( k,n\right) ,
\end{equation*}%
for any $u$ satisfying $n+1+\overline{\gamma }<\left\vert u\right\vert <n+2$%
, $t\in \lbrack \tau ,T]$ and any $i,j$.
\end{proof}

\bigskip

Now we are ready to obtain the weak comparison principle for positive
solutions.

\begin{theorem}
\label{ComparisonPositiveGeneral}Let $f_{j},h_{j}$ satisfy (\ref{Cond10})-(%
\ref{Cond11}). Assume that $f_{j},h_{j}$ satisfy (\ref{Cond1})-(\ref{Cond2}%
)\ and (\ref{Ineqfh}) for $u\in \mathbb{R}_{+}^{d}$. We suppose that either $%
f_{1}$ or $f_{2}$ satisfies (\ref{Cond4}) for $u\in \mathbb{R}_{+}^{d}$ and
for an arbitrary $R_{0}>0$. If $0\leq u_{\tau }^{1}\leq u_{\tau }^{2},$
there exist two solutions $u_{1},u_{2}$ (of (\ref{P1}) and (\ref{P2}),
respectively, with $u_{1}\left( \tau \right) =u_{\tau }^{1}$, $u_{2}\left(
\tau \right) =u_{\tau }^{2},$ such that $0\leq u_{1}\left( t\right) \leq
u_{2}\left( t\right) $, for all $t\in \lbrack \tau ,T].$
\end{theorem}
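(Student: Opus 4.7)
The plan is to combine the approximation machinery of Lemmas \ref{Aux2}--\ref{Aux4} with the positive comparison principle (Theorem \ref{ComparisonPositive2}) at the approximate level, and then pass to the limit in the spirit of Theorems \ref{ComparisonWeakScalar}--\ref{ComparisonWeak}.

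Assume without loss of generality that $f_2$ satisfies (\ref{Cond4}) for arbitrary $R_0>0$. First I would construct approximations $l_{j,n}^k$ of each $f_j$ as in Lemma \ref{Aux4}. To arrange the correct ordering between the two, I would build $l_{1,n}^k$ from the $\widetilde{f}_{1,n}$-family (which contains the additional non-negative term $|u^i|^{q_i-2}u^i$ on the tail) and $l_{2,n}^k$ from the $f_{2,n}$-family, and I would incorporate the shift $-p\delta_{n,k}$ of (\ref{Fn}) into $l_{2,n}^k$ rather than $l_{1,n}^k$. By Lemma \ref{Aux4}, each $l_{j,n}^k$ satisfies (\ref{AuxEst7})--(\ref{AuxEst8}), and together with $h_j$ fulfills (\ref{Cond10})--(\ref{Cond11}); hence Lemma \ref{Unique} applies, and the approximate problems
\[
\frac{\partial u}{\partial t} - a\Delta u + l_{j,n}^k(t,u) = h_j(t,x), \quad u|_{x\in\partial\Omega}=0, \quad u|_{t=\tau}=u_\tau^j,
\]
possess unique non-negative weak solutions $u_{j,n}^k$. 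Moreover, since $l_{2,n}^k$ coincides on $\{|u|\leq n\}$ with a mollification of $f_2$, the cooperativity (\ref{Cond4}) for $f_2$ transfers to $l_{2,n}^k$ on any prescribed ball once $n,k$ are large enough.

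The key pointwise step is to verify $l_{1,n}^k(t,u)\geq l_{2,n}^k(t,u)-\varepsilon_{n,k}$ for all $(t,u)$, with $\varepsilon_{n,k}\to 0$ as $k\to\infty$. For $u\in\mathbb{R}_+^d$ of moderate size, the inequality $f_1\geq f_2$ transfers to the mollified versions up to a uniform-continuity error of order $\delta_{n,k}$, which is absorbed by choosing $p$ large enough in the shift $-p\delta_{n,k}$. On the cutoff annulus, the extra polynomial term $|u^i|^{q_i-2}u^i$ in $\widetilde{f}_{1,n}$ makes the inequality hold. This is the main obstacle, because the mollification mixes values from outside $\mathbb{R}_+^d$, where the hypothesis $f_1\geq f_2$ is not assumed; the purpose of the two-parameter construction in Lemma \ref{Aux2B} and of the shift in Lemma \ref{Aux3} is precisely to absorb this defect. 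Once the pointwise inequality is in place, Theorem \ref{ComparisonPositive2} gives
\[
\|(u_{1,n}^k(t)-u_{2,n}^k(t))^+\|\leq C(\tau,T)\,\varepsilon_{n,k}, \quad t\in[\tau,T].
\]

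To finish, I would pass to the limit, first in $k$ and then in $n$. By (\ref{AuxEst7}), the sequences $u_{j,n}^k$ are uniformly bounded in $L^\infty(\tau,T;H)\cap L^2(\tau,T;V)\cap L^p(\tau,T;L^p(\Omega))$, with time derivatives bounded in $L^q(\tau,T;H^{-r}(\Omega))$. A diagonal extraction together with the Compactness Lemma produces convergences of the form (\ref{Conv1})--(\ref{Conv7}), and the identification of the limits as weak solutions of (\ref{P1}), resp.\ (\ref{P2}), with the prescribed initial data proceeds exactly as in the proof of Theorem \ref{ComparisonWeakScalar}. Non-negativity of $u_j$ survives the limit because each $l_{j,n}^k$ is paired with $h_j$ under (\ref{Cond11}). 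Since $\varepsilon_{n,k}\to 0$, the above estimate yields $\|(u_1(t)-u_2(t))^+\|=0$ in the limit, i.e., $u_1(t)\leq u_2(t)$ for all $t\in[\tau,T]$.
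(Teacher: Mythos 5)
Your proposal follows essentially the same route as the paper: approximate $f_1$ by the $\widetilde f_{1n}$-family and $f_2$ by the $f_{2n}$-family (with $q_i=p_i^2\le p_i^1$ so the extra tail term preserves the ordering), mollify, shift downward by multiples of $\delta_{nk}$ to restore the ordering and the approximate cooperativity (\ref{Cond4B}), apply Theorem \ref{ComparisonPositive2} with $\varepsilon=O(\delta_{nk})$, and pass to the limit in $k$ and then in $n$ via (\ref{Conv1})--(\ref{Conv7}). You have correctly identified the two main obstacles (the mollification sampling values outside $\mathbb{R}_+^d$, and the tail behaviour when $p_i^1\ne p_i^2$) and the devices that handle them.

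One detail of your construction needs adjusting: you propose to put the shift $-p\delta_{nk}$ only on $l_{2,n}^k$ and leave $l_{1,n}^k$ unshifted. The downward shift is not only there to restore the ordering $l_1\ge l_2$; it is also what restores condition (\ref{Cond11}) after mollification, since mollifying can raise the value of $f^i$ at points where $u^i=0$, and the inequality $h^i-f_{1n}^{ki}+\delta_{nk}\ge h^i-b_{1n}^i\ge 0$ is exactly how Lemma \ref{Aux3} recovers it. Without a shift on the $f_1$-side, the approximate problem for $u_{1,n}^k$ need not preserve the positive cone, and then the hypotheses (which hold only on $\mathbb{R}_+^d$) cannot be invoked. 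This is inconsistent with your own (correct) later remark that each $l_{j,n}^k$ must satisfy (\ref{Cond11}); the fix is the paper's choice $F_{1n}^{ki}=f_{1n}^{ki}-\delta_{nk}$, $F_{2n}^{ki}=f_{2n}^{ki}-3\delta_{nk}$, which shifts both sides while keeping the gap of $2\delta_{nk}$ needed to absorb the two mollification errors.
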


\begin{proof}
For $f_{j}$ let us consider the approximations $\widetilde{f}_{1n},$ $f_{2n}$
defined before with $q_{i}=p_{i}^{2}\leq p_{i}^{1}$, where the last
inequality follows from Lemma \ref{ConstantsPPositive}. Then by Lemmas \ref%
{Aux2}, \ref{Aux2B} we have that $\widetilde{f}_{1n},f_{2n}$ satisfy (\ref%
{Cond1})-(\ref{Cond2}) for $u\in \mathbb{R}_{+}^{d}$ with constants not
depending on $n$. Also, (\ref{AuxEst5}), (\ref{AuxEst5B}) hold and (\ref%
{Cond11}) is satisfied in both \ cases. Moreover, by (\ref{Ineqfh}) for $%
u\in \mathbb{R}_{+}^{d}$, we have%
\begin{eqnarray}
\widetilde{f}_{1n}^{i}\left( t,u\right) &=&\psi _{n}\left( \left\vert
u\right\vert \right) f_{1}^{i}\left( t,u\right) +\left( 1-\psi _{n}\left(
\left\vert u\right\vert \right) \right) \left( \left\vert u^{i}\right\vert
^{p_{i}^{1}-1}+\left\vert u^{i}\right\vert ^{p_{i}^{2}-1}+f_{1}^{i}\left(
t,0,...,0\right) \right)  \label{Ineqfn} \\
&\geq &\psi _{n}\left( \left\vert u\right\vert \right) f_{2}^{i}\left(
t,u\right) +\left( 1-\psi _{n}\left( \left\vert u\right\vert \right) \right)
\left( \left\vert u^{i}\right\vert ^{p_{i}^{2}-1}+f_{2}^{i}\left(
t,0,...,0\right) \right) =f_{2n}^{i}\left( t,u\right) ,  \notag
\end{eqnarray}%
if $u\in \mathbb{R}_{+}^{d}.$

As explained before, we can choose a sequence $\delta _{nk}\in \left(
0,1\right) $ such that $\delta _{nk}\rightarrow 0$, as $k\rightarrow \infty $%
, and $\left\vert f_{1n}^{ki}\left( t,u\right) -\widetilde{f}_{1n}^{i}\left(
t,u\right) \right\vert \leq \delta _{nk},$ $\left\vert f_{2n}^{ki}\left(
t,u\right) -f_{2n}^{i}\left( t,u\right) \right\vert \leq \delta _{nk},$ for
any $i,$ $n$ and any $u$ satisfying $\left\vert u\right\vert \leq n+2$.
Further, we consider the functions 
\begin{eqnarray*}
F_{1n}^{ki}\left( t,u\right) &=&f_{1n}^{ki}\left( t,u\right) -\delta _{nk},
\\
F_{2n}^{ki}\left( t,u\right) &=&f_{2n}^{ki}\left( t,u\right) -3\delta _{nk}.
\end{eqnarray*}%
By Lemma \ref{Aux3} we know that $F_{jn}^{k}$ satisfy (\ref{Cond1})-(\ref%
{Cond2}) for $u\in \mathbb{R}_{d}^{+}$ with constants not depending neither
on $k$ nor $n$, and condition (\ref{Cond11}) for $\left\vert u\right\vert
\leq n+2$, as well. Moreover, by (\ref{Ineqfn}) we have%
\begin{eqnarray}
F_{2n}^{ki}\left( t,u\right) &=&f_{2n}^{ki}\left( t,u\right) -3\delta _{nk}
\label{IneqFn} \\
&\leq &f_{2n}^{i}\left( t,u\right) -2\delta _{nk}\leq f_{1n}^{i}\left(
t,u\right) -2\delta _{nk}  \notag \\
&\leq &f_{1n}^{ki}\left( t,u\right) -\delta _{nk}=F_{1n}^{ki}\left(
t,u\right) ,  \notag
\end{eqnarray}%
if $u\in \mathbb{R}_{+}^{d}$ and $\left\vert u\right\vert \leq n+2.$

Suppose, for example, that $f_{2}$ satisfies condition (\ref{Cond4}) for $%
u\in \mathbb{R}_{+}^{d}$. For any $t\in \lbrack \tau ,T]$ and any $u,v\in 
\mathbb{R}_{+}^{d}$ such that $u^{i}=v^{i}$ and $u^{j}\leq v^{j}$ if $j\neq
i $, $\left\vert u\right\vert ,\left\vert v\right\vert \leq n$, we have%
\begin{eqnarray}
F_{2n}^{ki}\left( t,u\right) &=&f_{2n}^{ki}\left( t,u\right) -3\delta
_{nk}\geq f_{2n}\left( t,u\right) -4\delta _{nk}  \label{Cond4BF2} \\
&=&f_{2}^{i}\left( t,u\right) -4\delta _{nk}\geq f_{2}^{i}\left( t,v\right)
-4\delta _{nk}  \notag \\
&=&f_{2n}^{i}\left( t,v\right) -4\delta _{nk}\geq f_{2n}^{ki}\left(
t,v\right) -5\delta _{nk}=F_{2n}^{ki}\left( t,v\right) -2\delta _{nk}. 
\notag
\end{eqnarray}%
Hence, (\ref{Cond4B}) is satisfied with $R_{0}=n$ and $\varepsilon =2\delta
_{nk}.$

Now, we will define the functions 
\begin{eqnarray*}
l_{1n}^{k}(t,u) &=&\phi _{n}\left( \left\vert u\right\vert \right)
F_{1n}^{k}\left( t,u\right) +\left( 1-\phi _{n}\left( \left\vert
u\right\vert \right) \right) \widetilde{f}_{1n}\left( t,u\right) ,\text{ } \\
l_{2n}^{k}(t,u) &=&\phi _{n}\left( \left\vert u\right\vert \right)
F_{2n}^{k}\left( t,u\right) +\left( 1-\phi _{n}\left( \left\vert
u\right\vert \right) \right) f_{2n}\left( t,u\right) .
\end{eqnarray*}%
By Lemma \ref{Aux4} these functions satisfy (\ref{Cond1})-(\ref{Cond2}) for $%
u\in \mathbb{R}_{+}^{d}$ with constants not depending neither on $k$ nor $n$%
, inequality (\ref{AuxEst8}), and condition (\ref{Cond11}).

In view of (\ref{Ineqfn}), (\ref{IneqFn}) and $\phi _{n}\left( \left\vert
u\right\vert \right) =0$, if $\left\vert u\right\vert \geq n+2,$ we obtain 
\begin{equation*}
\phi _{n}\left( \left\vert u\right\vert \right) F_{1n}^{ki}\left( t,u\right)
\geq \phi _{n}\left( \left\vert u\right\vert \right) F_{2n}^{ki}\left(
t,u\right) ,\ \left( 1-\phi _{n}\left( \left\vert u\right\vert \right)
\right) \widetilde{f}_{1n}^{i}\left( t,u\right) \geq \left( 1-\phi
_{n}\left( \left\vert u\right\vert \right) \right) f_{2n}^{i}\left(
t,u\right)
\end{equation*}%
and then if $u\in \mathbb{R}_{+}^{d},$%
\begin{equation}
l_{1n}^{k}(t,u)\geq l_{2n}^{k}(t,u).  \label{Ineqlnk}
\end{equation}

On the other hand, since $l_{2n}^{k}(t,u)=F_{2n}^{k}\left( t,u\right) $ if $%
\left\vert u\right\vert \leq n+1+\overline{\gamma }$, (\ref{Cond4BF2})
implies 
\begin{equation}
l_{2n}^{k}(t,u)\geq l_{2n}^{k}(t,v)-2\delta _{nk},  \label{Cond4Bl2}
\end{equation}%
for any $t\in \lbrack \tau ,T]$ and any $u,v\in \mathbb{R}_{+}^{d}$ such
that $u^{i}=v^{i}$ and $u^{j}\leq v^{j}$ if $j\neq i,$ $\left\vert
u\right\vert ,\left\vert v\right\vert \leq n.$ Thus, (\ref{Cond4B}) is
satisfied with $R_{0}=n$ and $\varepsilon =2\delta _{nk}.$

We consider now the problems%
\begin{equation}
\left\{ 
\begin{array}{l}
\dfrac{\partial u}{\partial t}-a\Delta u+l_{jn}^{k}(t,u)=h_{j}\left(
t,x\right) ,\ \ (t,x)\in (\tau ,T)\times \Omega , \\ 
u|_{x\in \partial \Omega }=0, \\ 
u|_{t=\tau }=u_{\tau },%
\end{array}%
\right.  \label{Pnk}
\end{equation}%
and%
\begin{equation}
\left\{ 
\begin{array}{l}
\dfrac{\partial u}{\partial t}-a\Delta u+b_{jn}(t,u)=h_{j}\left( t,x\right)
,\ \ (t,x)\in (\tau ,T)\times \Omega , \\ 
u|_{x\in \partial \Omega }=0, \\ 
u|_{t=\tau }=u_{\tau },%
\end{array}%
\right.  \label{Pn}
\end{equation}%
where $b_{1n}=\widetilde{f}_{1n}$ and $b_{2n}=f_{2n}.$

In view of Lemma \ref{Aux4} and \cite[Lemma 5]{KapVal09} problem (\ref{Pnk})
has a unique weak solution $u_{n}^{k}\left( \text{\textperiodcentered }%
\right) $ such that $u_{n}^{k}\left( t\right) \geq 0$ for all $t\in \lbrack
\tau ,T]$. Let $u_{jn}^{k}\left( \text{\textperiodcentered }\right) $ be the
solutions of (\ref{Pnk}) corresponding to the initial data $u_{\tau }^{j}$,
where $j=1,2.$ Using Lemma \ref{Aux4} it is standard to obtain estimate (\ref%
{Est2}) with a constant $D$ not depending neither on $n$ nor $k$. Then the
solutions $u_{jn}^{k}\left( \text{\textperiodcentered }\right) $ of (\ref%
{Pnk}) satisfy%
\begin{equation*}
\left\Vert u_{jn}^{k}\left( t\right) \right\Vert ^{2}\leq \Vert u_{\tau
}^{j}\Vert ^{2}+D\int\limits_{\tau }^{T}(\Vert h_{j}(r)\Vert
^{2}+1)dr=K^{2}\left( \Vert u_{\tau }^{j}\Vert ,\tau ,T\right) .
\end{equation*}%
Thus, since $l_{2n}^{k}$ satisfy condition (\ref{Cond4B}) with $%
R_{0}^{2}=n^{2}\geq 2\max \{K^{2}\left( \Vert u_{\tau }^{1}\Vert ,\tau
,T\right) ,K^{2}\left( \Vert u_{\tau }^{2}\Vert ,\tau ,T\right) \}$, by
Theorem \ref{ComparisonPositive2} we know that as $u_{\tau }^{1}\leq u_{\tau
}^{2},$ we have 
\begin{equation}
\left\Vert \left( u_{1n}^{k}\left( t\right) -u_{2n}^{k}\left( t\right)
\right) ^{+}\right\Vert \leq 2C\left( \tau ,T\right) \delta _{nk}\text{, }
\label{IneqDeltank}
\end{equation}%
for all $t\in \lbrack \tau ,T]$, all $k$ and $n\geq \left( 2\max
\{K^{2}\left( \Vert u_{\tau }^{1}\Vert ,\tau ,T\right) ,K^{2}\left( \Vert
u_{\tau }^{2}\Vert ,\tau ,T\right) \}\right) ^{\frac{1}{2}}$.

Arguing as in the proof of Theorem \ref{ComparisonWeakScalar} we obtain that
for $j=1,2$ the sequence $u_{jn}^{k}$ converges (up to a subsequence) in the
sense of (\ref{Conv1})-(\ref{Conv7}) to a solution $u_{jn}$ of problem (\ref%
{Pn}) with initial data $u_{\tau }^{j}.$ In particular, as $k\rightarrow
\infty $ we have%
\begin{equation}
u_{jn}^{k}\left( t\right) \rightarrow u_{jn}\left( t\right) \text{ in }H%
\text{ for all }t\in \lbrack \tau ,T].  \label{ConvHnk}
\end{equation}%
Fix $n\geq \left( 2\max \{K^{2}\left( \Vert u_{\tau }^{1}\Vert ,\tau
,T\right) ,K^{2}\left( \Vert u_{\tau }^{2}\Vert ,\tau ,T\right) \}\right) ^{%
\frac{1}{2}}$, $i\in \{1,...,d\}$ and take any $t\in \lbrack \tau ,T].$
Denote by $\Omega _{nk}^{it}$ the set 
\begin{equation*}
\Omega _{nk}^{it}=\{x\in \Omega :u_{1n}^{ki}\left( t,x\right)
-u_{2n}^{ki}\left( t,x\right) \geq 0\}.
\end{equation*}%
By (\ref{IneqDeltank}) as $k\rightarrow \infty $ we have 
\begin{equation}
\int_{\Omega _{nk}^{t}}\left( u_{1n}^{ki}\left( t,x\right)
-u_{2n}^{ki}\left( t,x\right) \right) ^{2}dx=\int_{\Omega }\left( \left(
u_{1n}^{ki}\left( t\right) -u_{2n}^{ki}\left( t\right) \right) ^{+}\right)
^{2}dx\leq 4C^{2}\left( \tau ,T\right) \delta _{nk}^{2}\rightarrow 0.
\label{ConvOmegank}
\end{equation}%
Hence, (\ref{ConvHnk}) implies%
\begin{equation*}
\int_{\Omega \backslash \Omega _{nk}^{t}}\left( u_{1n}^{ki}\left( t,x\right)
-u_{2n}^{ki}\left( t,x\right) \right) ^{2}dx\rightarrow \int_{\Omega }\left(
u_{1n}^{i}\left( t,x\right) -u_{2n}^{i}\left( t,x\right) \right) ^{2}dx\text{
as }k\rightarrow \infty .
\end{equation*}%
Define the sequence 
\begin{equation*}
w_{n}^{ki}\left( t,x\right) =\left\{ 
\begin{array}{c}
u_{1n}^{ki}\left( t,x\right) -u_{2n}^{ki}\left( t,x\right) \text{, if }x\in
\Omega \backslash \Omega _{nk}^{it}, \\ 
0\text{, if }x\in \Omega _{nk}^{it}.%
\end{array}%
\right.
\end{equation*}%
Then it is clear that $\left\Vert w_{n}^{ki}\left( t\right) \right\Vert
_{L^{2}\left( \Omega \right) }\rightarrow \left\Vert u_{1n}^{ki}\left(
t\right) -u_{2n}^{ki}\left( t\right) \right\Vert _{L^{2}\left( \Omega
\right) }$ as $k\rightarrow \infty $. We note also that $w_{n}^{ki}\left(
t\right) \rightarrow u_{1n}^{i}\left( t\right) -u_{2n}^{i}\left( t\right) $
weakly in $L^{2}\left( \Omega \right) $, as for any $\xi \in L^{2}\left(
\Omega \right) ,$ (\ref{ConvOmegank}) gives%
\begin{eqnarray*}
&&\int_{\Omega }w_{n}^{ki}\left( t,x\right) \xi \left( x\right) dx \\
&=&\int_{\Omega }\left( u_{1n}^{ki}\left( t,x\right) -u_{2n}^{ki}\left(
t,x\right) \right) \xi \left( x\right) dx-\int_{\Omega _{nk}^{t}}\left(
u_{1n}^{ki}\left( t,x\right) -u_{2n}^{ki}\left( t,x\right) \right) \xi
\left( x\right) dx\rightarrow 0.
\end{eqnarray*}%
Therefore, $w_{n}^{ki}\left( t\right) \rightarrow u_{1n}^{i}\left( t\right)
-u_{2n}^{i}\left( t\right) $ strongly in $L^{2}\left( \Omega \right) $ so
that $w_{n}^{ki}\left( t,x\right) \rightarrow u_{1n}^{i}\left( t,x\right)
-u_{2n}^{i}\left( t,x\right) $ for a.a. $x\in \Omega .$ Since $%
w_{n}^{ki}\left( t,x\right) \leq 0$, for a.a. $\Omega $, we obtain 
\begin{equation*}
0\leq u_{1n}\left( t\right) \leq u_{2n}\left( t\right) \text{ for all }t\in
\lbrack \tau ,T].
\end{equation*}

Arguing again as in the proof of Theorem \ref{ComparisonWeak} we obtain that
the sequences $u_{1n},u_{2n}$ converge (up to a subsequence) in the sense of
(\ref{Conv1})-(\ref{Conv7}) to the solutions $u_{1},u_{2}$ of problem (\ref%
{P1}) and (\ref{P2}), respectively. Also, it holds%
\begin{equation*}
0\leq u_{1}\left( t\right) \leq u_{2}\left( t\right) ,\text{ for all }t\in
\lbrack \tau ,T].
\end{equation*}
\end{proof}

\bigskip

As in the previous section we shall generalize this theorem to the case
where the constant $\alpha $ can be negative. We note that if $f_{j},h_{j}$
satisfy (\ref{Cond11}), then $\widetilde{f}_{j}\left( t,v\right) =e^{-\beta
t}f_{j}(t,e^{\beta t}v)+\beta v$, $\widetilde{h}_{j}=e^{-\beta t}h_{j}$ also
satisfy (\ref{Cond11}). Arguing as in Theorem \ref{ComparisonWeak2} we
obtain the following.

\begin{theorem}
\label{ComparisonWeak2Positive}Let $f_{j},h_{j}$ satisfy (\ref{Cond10})-(\ref%
{Cond11}). Assume that $f_{j},h_{j}$ satisfy (\ref{Cond1b})-(\ref{Cond2b})
and (\ref{Ineqfh}) for $u\in \mathbb{R}_{+}^{d}$ . Also, suppose that either 
$f_{1}$ or $f_{2}$ satisfies (\ref{Cond4}) for $u\in \mathbb{R}_{+}^{d}$ for
an arbitrary $R_{0}>0$. If $u_{\tau }^{1}\leq u_{\tau }^{2},$ there exist
two solutions $u_{1},u_{2}$ (of (\ref{P1}) and (\ref{P2}), respectively),
with $u_{1}\left( \tau \right) =u_{\tau }^{1}$, $u_{2}\left( \tau \right)
=u_{\tau }^{2},$ such that $0\leq u_{1}\left( t\right) \leq u_{2}\left(
t\right) $, for all $t\in \lbrack \tau ,T].$
\end{theorem}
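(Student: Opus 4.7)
The plan is to mirror the proof of Theorem \ref{ComparisonWeak2}, but applying Theorem \ref{ComparisonPositiveGeneral} in place of Theorem \ref{ComparisonWeak} so as to preserve positivity. First I would introduce the exponential rescaling $v = e^{-\beta t} u$ with $\beta > -\min\{\alpha^1, \alpha^2\}$ so as to absorb the potentially non-positive constants $\alpha^j$. Multiplying problems (\ref{P1}) and (\ref{P2}) by $e^{-\beta t}$ yields problems (\ref{P1b}) and (\ref{P2b}) for $v_1, v_2$ with nonlinearities $\widetilde{f}_j(t,v) = e^{-\beta t} f_j(t, e^{\beta t} v) + \beta v$ and data $\widetilde{h}_j = e^{-\beta t} h_j$ and initial conditions $e^{-\beta\tau} u_\tau^j \geq 0$. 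The correspondence $u(t) = e^{\beta t} v(t)$ between solutions of the original and rescaled problems is bijective, as already noted in the text preceding Theorem \ref{ComparisonWeak2}.

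Next I would verify one by one that the rescaled data fit the hypotheses of Theorem \ref{ComparisonPositiveGeneral}. The growth/dissipativity bounds (\ref{Cond1})-(\ref{Cond2}) with $p_i = 2$ and a strictly positive constant $\alpha^j + \beta$ are exactly the estimates (\ref{C1})-(\ref{C2}) already computed. Condition (\ref{Ineqfh}) for the rescaled data is immediate: $\widetilde{h}_1 \leq \widetilde{h}_2$ since $e^{-\beta t} > 0$, and $\widetilde{f}_1^i(t,v) - \widetilde{f}_2^i(t,v) = e^{-\beta t}(f_1^i(t,e^{\beta t} v) - f_2^i(t, e^{\beta t} v)) \geq 0$. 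For condition (\ref{Cond11}), the remark immediately preceding the theorem already asserts that $\widetilde{f}_j, \widetilde{h}_j$ inherit (\ref{Cond11}) from $f_j, h_j$, and (\ref{Cond10}) is about the fixed matrix $a$, so it persists.

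The only condition requiring a brief direct check is the cooperativity (\ref{Cond4}) for, say, $\widetilde{f}_1$: if $v, w \in \mathbb{R}_+^d$ with $v^i = w^i$ and $v^j \leq w^j$ for $j \neq i$, then $e^{\beta t} v$ and $e^{\beta t} w$ again lie in $\mathbb{R}_+^d$ and satisfy the same componentwise relations, so by (\ref{Cond4}) for $f_1$ on all of $\mathbb{R}_+^d$ we obtain $f_1^i(t, e^{\beta t} v) \geq f_1^i(t, e^{\beta t} w)$; multiplying by $e^{-\beta t}$ and adding the identical term $\beta v^i = \beta w^i$ gives $\widetilde{f}_1^i(t,v) \geq \widetilde{f}_1^i(t, w)$, so (\ref{Cond4}) holds for $\widetilde{f}_1$ for arbitrary $R_0 > 0$.

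With all hypotheses of Theorem \ref{ComparisonPositiveGeneral} verified for the rescaled problems, I obtain solutions $v_1, v_2$ of (\ref{P1b}), (\ref{P2b}) with $v_j(\tau) = e^{-\beta\tau} u_\tau^j$ and $0 \leq v_1(t) \leq v_2(t)$ for all $t \in [\tau, T]$. Setting $u_j(t) = e^{\beta t} v_j(t)$ produces solutions of (\ref{P1}) and (\ref{P2}) with the correct initial data and $0 \leq u_1(t) \leq u_2(t)$, completing the proof. I do not anticipate a serious obstacle here; the only nontrivial point is the cooperativity check above, but it reduces to a direct componentwise comparison once one notes that the scaling $v \mapsto e^{\beta t} v$ preserves both the positive cone $\mathbb{R}_+^d$ and the partial order used in (\ref{Cond4}).
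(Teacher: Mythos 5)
Your proposal is correct and follows exactly the route the paper intends: the paper proves this theorem by remarking that the rescaling $v=e^{-\beta t}u$ preserves (\ref{Cond11}) and then "arguing as in Theorem \ref{ComparisonWeak2}," i.e., verifying (\ref{C1})--(\ref{C2}), (\ref{Ineqfh}) and (\ref{Cond4}) for $\widetilde{f}_j$ and applying Theorem \ref{ComparisonPositiveGeneral} before scaling back. Your write-up simply makes those verifications explicit, including the cooperativity check, so there is nothing to correct.
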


\bigskip

But in this case we can consider another interesting situation, as the
values $p_{i}^{1}$ and $p_{i}^{2}$ are not necessarily equal.

Let $f_{1}$ satisfy (\ref{Cond1})-(\ref{Cond2}) for $u\in \mathbb{R}_{+}^{d}$%
, whereas $f_{2}$ satisfy (\ref{Cond1b})-(\ref{Cond2b}) for $u\in \mathbb{R}%
_{+}^{d},$ with constants $\alpha ^{j},C_{1}^{j},C_{2}^{j},\ j=1,2$. Then if 
$\alpha ^{2}<0,$ we make in (\ref{Equation}) the change of variable $%
v=e^{-\beta t}u$, where $\beta >-\alpha ^{2}$. We obtain problems (\ref{P1b}%
)-(\ref{P2b}).

If $v\left( t\right) $ is a weak solution of (\ref{P1b}), then $u\left(
t\right) =e^{\beta t}v\left( t\right) $ is a weak solution of (\ref{P1})
(and the same is true, of course, for (\ref{P2b}) and (\ref{P2})).

The function $\widetilde{f}_{1}\left( t,v\right) =e^{-\beta
t}f_{1}(t,e^{\beta t}v)+\beta v$ satisfies (\ref{Cond1})-(\ref{Cond2}) for $%
v\in \mathbb{R}_{+}^{d}$ with the same $p_{i}^{1}$ as $f_{1}$. Indeed, as $%
e^{\beta t\left( p_{i}^{1}-2\right) }\geq 1$, we get%
\begin{eqnarray}
\sum_{i=1}^{d}\left\vert \widetilde{f}_{1}^{i}(t,v)\right\vert ^{\frac{%
p_{i}^{1}}{p_{i}^{1}-1}} &\leq &K_{1}\left( \sum_{i=1}^{d}\left\vert
f_{1}^{i}(t,e^{\beta t}v)\right\vert ^{\frac{p_{i}^{1}}{p_{i}^{1}-1}%
}+\sum_{i=1}^{d}\left\vert v^{i}\right\vert ^{\frac{p_{i}^{1}}{p_{i}^{1}-1}%
}\right)  \label{C3} \\
&\leq &K_{2}\left( 1+\sum_{i=1}^{d}\left\vert v^{i}\right\vert
^{p_{i}^{1}}\right) ,  \notag
\end{eqnarray}%
\begin{equation}
\left( \widetilde{f}_{1}(t,v),v\right) \geq e^{-2\beta t}\left( \alpha
^{1}\sum_{i=1}^{d}e^{\beta tp_{i}}\left\vert v^{i}\right\vert
^{p_{i}^{1}}-C_{2}^{1}\right) +\beta \left\vert v\right\vert ^{2}\geq \alpha
^{1}\sum_{i=1}^{d}\left\vert v^{i}\right\vert ^{p_{i}^{1}}-\widetilde{C}%
_{2}^{1}.  \label{C4}
\end{equation}

Then we obtain the following.

\begin{theorem}
\label{ComparisonWeak3Positive}Let $f_{j},h_{j}$ satisfy (\ref{Cond10})-(\ref%
{Cond11}). Let $f_{1}$ satisfy (\ref{Cond1})-(\ref{Cond2}) for $u\in \mathbb{%
R}_{+}^{d}$, whereas $f_{2}$ satisfy (\ref{Cond1b})-(\ref{Cond2b}) for $u\in 
\mathbb{R}_{+}^{d},$ with constants $\alpha ^{j},C_{1}^{j},C_{2}^{j},\ j=1,2$%
. Assume that $f_{j},h_{j}$ satisfy (\ref{Ineqfh}). Also, suppose that
either $f_{1}$ or $f_{2}$ satisfies (\ref{Cond4}) for $u\in \mathbb{R}%
_{+}^{d}$ for an arbitrary $R_{0}>0$. If $u_{\tau }^{1}\leq u_{\tau }^{2},$
there exist two solutions $u_{1},u_{2}$ (of (\ref{P1}) and (\ref{P2}),
respectively), with $u_{1}\left( \tau \right) =u_{\tau }^{1}$, $u_{2}\left(
\tau \right) =u_{\tau }^{2},$ such that $0\leq u_{1}\left( t\right) \leq
u_{2}\left( t\right) $, for all $t\in \lbrack \tau ,T].$
\end{theorem}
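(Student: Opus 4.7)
The plan is to reduce this to Theorem \ref{ComparisonPositiveGeneral} by the exponential change of variable $v(t) = e^{-\beta t} u(t)$ with $\beta > -\alpha^{2}$, exactly as was done in Theorem \ref{ComparisonWeak2Positive}. This converts (\ref{P1})--(\ref{P2}) into the pair (\ref{P1b})--(\ref{P2b}) with transformed nonlinearities $\widetilde{f}_{j}(t,v)=e^{-\beta t}f_{j}(t,e^{\beta t}v)+\beta v$ and right-hand sides $\widetilde{h}_{j}(t,x) = e^{-\beta t}h_{j}(t,x)$. The correspondence between weak solutions of the original and transformed problems is the same bijection already used earlier, so it suffices to produce a pair of ordered positive solutions for the transformed systems and then multiply by $e^{\beta t}$.

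Next I would verify that the transformed problems fit the hypotheses of Theorem \ref{ComparisonPositiveGeneral}. For $\widetilde{f}_{2}$, the sublinear bounds (\ref{Cond1b})--(\ref{Cond2b}) give (\ref{Cond1})--(\ref{Cond2}) with exponent $p_{i}^{2}=2$ by the same computation as (\ref{C1})--(\ref{C2}). For $\widetilde{f}_{1}$, estimates (\ref{C3})--(\ref{C4}) show that (\ref{Cond1})--(\ref{Cond2}) hold on $\mathbb{R}_{+}^{d}$ with the same exponents $p_{i}^{1}\geq 2$ as $f_{1}$. Thus Lemma \ref{ConstantsPPositive} is satisfied ($p_{i}^{1}\geq p_{i}^{2}=2$), which is exactly the framework in which Theorem \ref{ComparisonPositiveGeneral} is formulated. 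The conditions (\ref{Cond10}) and (\ref{Cond11}) are preserved (the matrix $a$ is unchanged, and (\ref{Cond11}) for $\widetilde{f}_{j},\widetilde{h}_{j}$ follows by multiplying both sides by $e^{-\beta t}>0$ and noting that $v^{i}=0$ iff $(e^{\beta t}v)^{i}=0$).

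It remains to transfer (\ref{Ineqfh}) and (\ref{Cond4}). If $v\in \mathbb{R}_{+}^{d}$, then $e^{\beta t}v\in \mathbb{R}_{+}^{d}$, so $f_{1}^{i}(t,e^{\beta t}v)\geq f_{2}^{i}(t,e^{\beta t}v)$ multiplied by $e^{-\beta t}$ (and adding the common term $\beta v^{i}$) yields $\widetilde{f}_{1}^{i}(t,v)\geq \widetilde{f}_{2}^{i}(t,v)$; the inequality $\widetilde{h}_{1}\leq \widetilde{h}_{2}$ is immediate. Concerning the cooperativity condition, if (say) $f_{1}$ satisfies (\ref{Cond4}) on $\mathbb{R}_{+}^{d}$ for arbitrary $R_{0}>0$, then for $u,v\in \mathbb{R}_{+}^{d}$ with $u^{i}=v^{i}$ and $u^{j}\leq v^{j}$ ($j\neq i$), $\left\vert u\right\vert ,\left\vert v\right\vert \leq R_{0}$, the vectors $e^{\beta t}u, e^{\beta t}v$ satisfy the same relations with bound $e^{\beta T}R_{0}$, so $\widetilde{f}_{1}^{i}(t,u)-\widetilde{f}_{1}^{i}(t,v) = e^{-\beta t}(f_{1}^{i}(t,e^{\beta t}u)-f_{1}^{i}(t,e^{\beta t}v))\geq 0$, which gives (\ref{Cond4}) for $\widetilde{f}_{1}$ with the new arbitrary $R_{0}$.

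Applying Theorem \ref{ComparisonPositiveGeneral} to the transformed data $\widetilde{f}_{j},\widetilde{h}_{j}$ with initial conditions $e^{-\beta \tau}u_{\tau}^{1}\leq e^{-\beta \tau}u_{\tau}^{2}$ yields two solutions $v_{1},v_{2}$ of (\ref{P1b}), (\ref{P2b}) with $0\leq v_{1}(t)\leq v_{2}(t)$ on $[\tau,T]$; then $u_{j}(t):=e^{\beta t}v_{j}(t)$ are the desired solutions of (\ref{P1}), (\ref{P2}) satisfying $0\leq u_{1}(t)\leq u_{2}(t)$. The only delicate point in this reduction is bookkeeping the asymmetry $p_{i}^{1}\neq p_{i}^{2}$, but Lemma \ref{ConstantsPPositive} together with (\ref{C3})--(\ref{C4}) tells us this asymmetry is already accommodated inside Theorem \ref{ComparisonPositiveGeneral}, so no new technical ingredient beyond the change of variable is required.
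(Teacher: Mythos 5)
Your proposal is correct and follows essentially the same route as the paper: the exponential change of variable $v=e^{-\beta t}u$ with $\beta>-\alpha^{2}$, verification that $\widetilde{f}_{1}$ keeps the exponents $p_{i}^{1}$ via (\ref{C3})--(\ref{C4}) while $\widetilde{f}_{2}$ satisfies (\ref{Cond1})--(\ref{Cond2}) with $p_{i}^{2}=2$, transfer of (\ref{Ineqfh}), (\ref{Cond10})--(\ref{Cond11}) and (\ref{Cond4}), and then an application of Theorem \ref{ComparisonPositiveGeneral} followed by multiplication by $e^{\beta t}$. The only difference is that you spell out the transfer of (\ref{Cond4}) and (\ref{Cond11}), which the paper dismisses as obvious.
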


\begin{proof}
We consider problems (\ref{P1b}) and (\ref{P2b}). In view of (\ref{C1})-(\ref%
{C2}) and (\ref{C3})-(\ref{C4}), $\widetilde{f}_{2}\left( t,v\right)
=e^{-\beta t}f_{j}(t,e^{\beta t}v)+\beta v$ satisfy (\ref{Cond1})-(\ref%
{Cond2}) for $v\in \mathbb{R}_{+}^{d}$ with $p_{i}^{2}=2$, and $\widetilde{f}%
_{1}$ satisfies (\ref{Cond1})-(\ref{Cond2}) for $v\in \mathbb{R}_{+}^{d}$
with the same $p_{i}^{1}$ as $f_{1}$. Also, defining $\widetilde{h}%
_{j}\left( t,x\right) =e^{-\beta t}h_{j}(t,x)$ it is clear that (\ref{Ineqfh}%
) and (\ref{Cond11}) hold. Finally, if, for example, $f_{1}$ satisfies (\ref%
{Cond4}) for $u\in \mathbb{R}_{+}^{d}$ for any $R_{0}>0$, then it is obvious
that for $\widetilde{f}_{1}$ is true as well.

Hence, by Theorem \ref{ComparisonPositiveGeneral} there exist two solutions $%
v_{1},v_{2}$ (of (\ref{P1b}) and (\ref{P2b}), respectively), with $%
v_{j}\left( \tau \right) =e^{-\beta \tau }u_{\tau }^{j}$, such that $0\leq
v_{1}\left( t\right) \leq v_{2}\left( t\right) $, for all $t\in \lbrack \tau
,T].$ Thus%
\begin{equation*}
0\leq u_{1}\left( t\right) =e^{\beta t}v_{1}\left( t\right) \leq e^{\beta
t}v_{2}\left( t\right) =u_{2}\left( t\right) \text{, for }t\in \lbrack \tau
,T],
\end{equation*}%
and $u_{1},u_{2}$ are solutions (of (\ref{P1}) and (\ref{P2}), respectively,
such that $u_{j}\left( \tau \right) =u_{\tau }^{j}$.
\end{proof}

\begin{remark}
If $f_{j}$ satisfy (\ref{Cond3}), then the solutions $u_{1},u_{2}$ given in
Theorems \ref{ComparisonWeak2Positive}, \ref{ComparisonWeak3Positive} are
unique for the corresponding initial data.
\end{remark}

\begin{remark}
\label{Neumann}All the result proved so far are true if insead of Dirichlet
boundary conditions\ we consider Neumann boundary conditions%
\begin{equation*}
\frac{\partial u}{\partial \nu }=0\text{ in }\partial \Omega ,
\end{equation*}%
where $\nu $ is the unit outward normal. In such a case the space $V$ will
be $\left( H^{1}\left( \Omega \right) \right) ^{d}$. The proofs remain the
same.
\end{remark}

\section{Applications}

We shall apply now the previous results to some model of physical and
biological interest.

\subsection{The Lotka-Volterra system}

We also study the Lotka-Volterra system with diffusion%
\begin{equation}
\left\{ 
\begin{array}{cc}
\dfrac{\partial u^{1}}{\partial t}= & D_{1}\Delta u^{1}+u^{1}\left(
a_{1}\left( t\right) -u^{1}-a_{12}\left( t\right) u^{2}-a_{13}\left(
t\right) u^{3}\right) , \\ 
\dfrac{\partial u^{2}}{\partial t}= & D_{2}\Delta u^{2}+u^{2}\left(
a_{2}\left( t\right) -u^{2}-a_{21}\left( t\right) u^{1}-a_{23}\left(
t\right) u^{3}\right) , \\ 
\dfrac{\partial u^{3}}{\partial t}= & D_{3}\Delta u^{3}+u^{3}\left(
a_{3}\left( t\right) -u^{3}-a_{31}\left( t\right) u^{1}-a_{32}\left(
t\right) u^{2}\right) ,%
\end{array}%
\right.  \label{LV}
\end{equation}%
with either Dirichlet or Neumann boundary conditions, where $%
u^{i}=u^{i}\left( x,t\right) \geq 0$ and the functions $a_{i}\left( t\right)
,a_{ij}\left( t\right) $ are positive and continuous. Also, $D_{i}$ are
positive constants and $\Omega \subset \mathbb{R}^{3}$. The initial data $%
u_{\tau }$ belongs to $\left( L^{2}\left( \Omega \right) \right) ^{3}.$

In this case the functions $f_{1},h_{1}$ are given by%
\begin{equation*}
f_{1}\left( t,u\right) =\left( 
\begin{array}{c}
-u^{1}\left( a_{1}\left( t\right) -u^{1}-a_{12}\left( t\right)
u^{2}-a_{13}\left( t\right) u^{3}\right) \\ 
-u^{2}\left( a_{2}\left( t\right) -u^{2}-a_{21}\left( t\right)
u^{1}-a_{23}\left( t\right) u^{3}\right) \\ 
-u^{3}\left( a_{3}\left( t\right) -u^{3}-a_{31}\left( t\right)
u^{1}-a_{32}\left( t\right) u^{2}\right)%
\end{array}%
\right) ,\ h_{1}\left( t\right) \equiv 0.
\end{equation*}

Uniqueness of the Cauchy problem for this system has been proved only if we
consider solutions confined in an invariant region (for example, in a
parallelepiped $\mathcal{D}=\{\left( u^{1},u^{2},u^{3}\right) :0\leq
u^{i}\leq k^{i}\}$ when the parameters do not depend on $t$) (see \cite%
{Marion} and \cite{Smoller}). However, in the general case for initial data
just in $\left( L^{2}\left( \Omega \right) \right) ^{3}$ it is an open
problem so far.

System (\ref{LV}) satisfies conditions (\ref{Cond1})-(\ref{Cond2}) with $%
p_{1}=p_{2}=p_{3}=3$ for$\,u\in \mathbb{R}_{+}^{3}$ \cite[p.263]{KapVal09}.
Also, it is clear that (\ref{Cond10})-(\ref{Cond11}) hold.

We shall compare with the following system%
\begin{equation}
\left\{ 
\begin{array}{cc}
\dfrac{\partial u^{1}}{\partial t}= & D_{1}\Delta u^{1}+u^{1}\left(
a_{1}\left( t\right) -u^{1}\right) , \\ 
\dfrac{\partial u^{2}}{\partial t}= & D_{2}\Delta u^{2}+u^{2}\left(
a_{2}\left( t\right) -u^{2}\right) , \\ 
\dfrac{\partial u^{3}}{\partial t}= & D_{3}\Delta u^{3}+u^{3}\left(
a_{3}\left( t\right) -u^{3}\right) ,%
\end{array}%
\right.  \label{LVLogistic}
\end{equation}%
which is a system of three uncoupled logitic equations. The functions $%
f_{2},h_{2}$ are given by%
\begin{equation*}
f_{2}\left( t,u\right) =\left( 
\begin{array}{c}
-u^{1}\left( a_{1}\left( t\right) -u^{1}\right) \\ 
-u^{2}\left( a_{2}\left( t\right) -u^{2}\right) \\ 
-u^{3}\left( a_{3}\left( t\right) -u^{3}\right)%
\end{array}%
\right) ,\ h_{2}\left( t\right) \equiv 0.
\end{equation*}

It is easy to see that system (\ref{LVLogistic}) satisfies conditions (\ref%
{Cond1})-(\ref{Cond2}) with $p_{1}=p_{2}=p_{3}=3$ for$\,u\in \mathbb{R}^{3}$%
. Also, it is clear that (\ref{Cond10})-(\ref{Cond11}) hold, and that
condition (\ref{Cond4}) is trivially satisifed. Moreover,%
\begin{equation}
f_{1}^{i}\left( t,u\right) \geq f_{2}^{i}\left( t,u\right) ,  \notag
\end{equation}%
for all $t$, $u\in \mathbb{R}_{+}^{3}$ and $i$, so that (\ref{Ineqfh}) holds
for $u\in \mathbb{R}_{+}^{3}.$

Also, we have%
\begin{eqnarray*}
\left( f_{2u}\left( t,u\right) w,w\right) &\geq &-a_{1}\left( t\right)
w_{1}^{2}-a_{2}\left( t\right) w_{2}^{2}-a_{3}\left( t\right) w_{3}^{2} \\
&&+2\left( u^{1}w_{1}^{2}+u^{2}w_{2}^{2}+u^{3}w_{3}^{2}\right) \geq
-C\left\vert w\right\vert ^{2},
\end{eqnarray*}%
for all $u\in \mathbb{R}_{+}^{3}$, $t\in \lbrack \tau ,T]$ and $w\in \mathbb{%
R}^{3}$, where $C>0.$

Thus, by Theorem \ref{ComparisonPositiveGeneral}, Lemma \ref{Unique} and
Remark \ref{Neumann} we obtain the following theorem.

\begin{theorem}
If $0\leq u_{\tau }^{1}\leq u_{\tau }^{2},$ there exist a solution $u_{1}$
of (\ref{LV}) with $u_{1}\left( \tau \right) =u_{\tau }^{1}$ such that $%
0\leq u_{1}\left( t\right) \leq u_{2}\left( t\right) $, for all $t\in
\lbrack \tau ,T]$, where $u_{2}$ is the unique solution with $u_{2}\left(
\tau \right) =u_{\tau }^{2}$ of (\ref{LVLogistic}) in the class of solutions
satisfying $u_{2}\left( t\right) \geq 0$ for all $t.$
\end{theorem}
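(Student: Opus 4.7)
The plan is to apply Theorem \ref{ComparisonPositiveGeneral} directly, after verifying that the pair $(f_1,h_1)$ and $(f_2,h_2)$ from systems (\ref{LV}) and (\ref{LVLogistic}) fulfills all its hypotheses, and then invoke Lemma \ref{Unique} to identify $u_2$ as the unique non-negative solution of (\ref{LVLogistic}). The Neumann case is handled by Remark \ref{Neumann} without any change to the argument.

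First I would collect the structural conditions already recorded in the text: both $f_1$ and $f_2$ satisfy (\ref{Cond1})-(\ref{Cond2}) on $\mathbb{R}_+^3$ with $p_1=p_2=p_3=3$; the diffusion matrix is $\mathrm{diag}(D_1,D_2,D_3)$, so (\ref{Cond10}) holds; each $f_j^i$ vanishes when $u^i=0$ (and $h_j\equiv 0$), so (\ref{Cond11}) holds; and the comparison of nonlinearities $f_1^i(t,u)\ge f_2^i(t,u)$ for $u\in\mathbb{R}_+^3$ together with $h_1=h_2=0$ gives (\ref{Ineqfh}) on $\mathbb{R}_+^3$. These verifications are all essentially recorded in the preceding paragraphs.

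Next I would check that $f_2$ satisfies the cooperativity assumption (\ref{Cond4}) for every $R_0>0$ on $\mathbb{R}_+^3$. Since the three logistic equations are decoupled, $f_2^i(t,u)$ depends only on $u^i$, hence $f_2^i(t,u)=f_2^i(t,v)$ whenever $u^i=v^i$, and (\ref{Cond4}) holds trivially (with equality) on all of $\mathbb{R}^3$. I would also note that the computation already displayed in the excerpt shows $(f_{2u}(t,u)w,w)\ge -C|w|^2$ for $u\in\mathbb{R}_+^3$, so $f_2$ satisfies (\ref{Cond3}) on $\mathbb{R}_+^3$ with $C_3(t)\equiv C\in L^1(\tau,T)$.

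Having verified all hypotheses, I would apply Theorem \ref{ComparisonPositiveGeneral} (with the cooperative function taken to be $f_2$) to obtain solutions $u_1$ of (\ref{LV}) and $\tilde u_2$ of (\ref{LVLogistic}) with the prescribed initial data and satisfying $0\le u_1(t)\le \tilde u_2(t)$ for all $t\in[\tau,T]$. Finally, because $f_2$ satisfies (\ref{Cond3}) on $\mathbb{R}_+^3$, Lemma \ref{Unique} guarantees that the non-negative solution of (\ref{LVLogistic}) with initial datum $u_\tau^2$ is unique; this unique solution must coincide with $\tilde u_2$, so it is the $u_2$ appearing in the statement. The Neumann variant requires no additional argument by Remark \ref{Neumann}. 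There is no real obstacle here beyond the bookkeeping of hypotheses: the main point worth emphasizing is that the decoupling of (\ref{LVLogistic}) is what makes (\ref{Cond4}) trivial and hence allows arbitrary $R_0$, so that Theorem \ref{ComparisonPositiveGeneral} applies without any size restriction on the initial data.
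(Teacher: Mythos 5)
Your proposal is correct and follows exactly the route the paper takes: verify (\ref{Cond1})--(\ref{Cond2}), (\ref{Cond10})--(\ref{Cond11}), (\ref{Ineqfh}) on $\mathbb{R}_{+}^{3}$, note that the decoupling of (\ref{LVLogistic}) makes (\ref{Cond4}) hold trivially for every $R_{0}$, apply Theorem \ref{ComparisonPositiveGeneral} together with Remark \ref{Neumann}, and then use the estimate $\left( f_{2u}(t,u)w,w\right) \geq -C\left\vert w\right\vert ^{2}$ with Lemma \ref{Unique} to identify the comparison solution with the unique non-negative solution of the logistic system. No discrepancies to report.
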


This theorem says that there exists at least one solution of the
Lotka-Volterra system which is dominated by the unique non-negative solution
of the uncoupled logistic system (\ref{LVLogistic}).

\bigskip

Further, we shall compare with the uncoupled\ linear system%
\begin{equation}
\left\{ 
\begin{array}{cc}
\dfrac{\partial u^{1}}{\partial t}= & D_{1}\Delta u^{1}+u^{1}a_{1}\left(
t\right) , \\ 
\dfrac{\partial u^{2}}{\partial t}= & D_{2}\Delta u^{2}+u^{2}a_{2}\left(
t\right) , \\ 
\dfrac{\partial u^{3}}{\partial t}= & D_{3}\Delta u^{3}+u^{3}a_{3}\left(
t\right) .%
\end{array}%
\right.  \label{LVLinear}
\end{equation}%
Hence, 
\begin{equation*}
f_{2}\left( t,u\right) =\left( 
\begin{array}{c}
-u^{1}a_{1}\left( t\right) \\ 
-u^{2}a_{2}\left( t\right) \\ 
-u^{3}a_{3}\left( t\right)%
\end{array}%
\right) ,\ h_{2}\left( t\right) \equiv 0.
\end{equation*}

Obviously system (\ref{LVLinear}) satisfies conditions (\ref{Cond1b})-(\ref%
{Cond2b}) for$\,u\in \mathbb{R}^{3}$. Also, it is clear that (\ref{Cond10})-(%
\ref{Cond11}) and condition (\ref{Cond4}) are trivially satisifed. Moreover,%
\begin{equation}
f_{1}^{i}\left( t,u\right) \geq f_{2}^{i}\left( t,u\right) ,  \notag
\end{equation}%
for all $t$, $u\in \mathbb{R}_{+}^{3}$ and $i$, so that (\ref{Ineqfh}) holds
for $u\in \mathbb{R}_{+}^{3}.$ Also, we have%
\begin{equation*}
\left( f_{2u}\left( t,u\right) w,w\right) \geq -a_{1}\left( t\right)
w_{1}^{2}-a_{2}\left( t\right) w_{2}^{2}-a_{3}\left( t\right) w_{3}^{2}\geq
-C\left\vert w\right\vert ^{2},
\end{equation*}%
for all $u\in \mathbb{R}^{3}$, $t\in \lbrack \tau ,T]$ and $w\in \mathbb{R}%
^{3}$, where $C>0.$

Thus, by Theorem \ref{ComparisonWeak3Positive} and Remark \ref{Neumann} we
have the following.

\begin{theorem}
\label{ComparisonLVLinear}If $0\leq u_{\tau }^{1}\leq u_{\tau }^{2},$ there
exist a solution $u_{1}$ of (\ref{LV}) with $u_{1}\left( \tau \right)
=u_{\tau }^{1}$ such that $0\leq u_{1}\left( t\right) \leq u_{2}\left(
t\right) $, for all $t\in \lbrack \tau ,T]$, where $u_{2}$ is the unique
solution of (\ref{LVLinear}) with $u_{2}\left( \tau \right) =u_{\tau }^{2}.$
\end{theorem}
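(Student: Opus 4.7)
My plan is to apply Theorem \ref{ComparisonWeak3Positive} directly, after matching the two displayed systems to the abstract framework: $f_1,h_1$ are those of the Lotka--Volterra system (\ref{LV}) and $f_2,h_2$ those of the uncoupled linear system (\ref{LVLinear}), with $h_1\equiv h_2\equiv 0$, the diffusion matrix $a=\operatorname{diag}(D_1,D_2,D_3)$, and phase space $\left(L^2(\Omega)\right)^3$ with Dirichlet or (via Remark \ref{Neumann}) Neumann boundary conditions. All the structural hypotheses have already been recorded in the discussion preceding the theorem, so the proof is essentially a verification checklist.

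First I would note that $a$ is diagonal, giving (\ref{Cond10}), and that each component $f_j^i$ carries an overall factor $u^i$, so $f_j^i(t,u^1,\ldots,0,\ldots,u^d)\equiv 0$; since $h_j\equiv 0$ this yields (\ref{Cond11}). Next, $f_1$ satisfies (\ref{Cond1})--(\ref{Cond2}) on $\mathbb{R}_+^3$ with $p_1=p_2=p_3=3$, as cited from \cite{KapVal09}, while $f_2$ is linear in $u$ and diagonal, so it trivially satisfies (\ref{Cond1b})--(\ref{Cond2b}) on the whole of $\mathbb{R}^3$ (with $\alpha$ possibly negative, which is precisely why we need Theorem \ref{ComparisonWeak3Positive} rather than Theorem \ref{ComparisonWeak2Positive}). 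For (\ref{Ineqfh}) on $\mathbb{R}_+^3$ I would compute
\begin{equation*}
f_1^i(t,u)-f_2^i(t,u)=u^i\Bigl(u^i+\sum_{j\neq i}a_{ij}(t)u^j\Bigr)\geq 0,
\end{equation*}
valid for $u\geq 0$ since the coefficients $a_{ij}(t)$ are positive.

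Condition (\ref{Cond4}) is where one might pause, but it is actually free: since each $f_2^i(t,u)=-a_i(t)u^i$ depends only on $u^i$, the value of $f_2^i$ is unchanged when the other coordinates are varied, so (\ref{Cond4}) holds with equality for every $R_0>0$. Together with the preceding items, all hypotheses of Theorem \ref{ComparisonWeak3Positive} are met, and we obtain two solutions $u_1,u_2$ of (\ref{LV}) and (\ref{LVLinear}) respectively with $u_j(\tau)=u_\tau^j$ and $0\leq u_1(t)\leq u_2(t)$ on $[\tau,T]$.

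Finally, to upgrade the conclusion about $u_2$ to uniqueness I would invoke the remark following Theorem \ref{ComparisonWeak3Positive}: since $f_{2u}(t,u)=\operatorname{diag}(-a_1(t),-a_2(t),-a_3(t))$ gives $(f_{2u}(t,u)w,w)\geq -C|w|^2$ uniformly on $\mathbb{R}^3$, condition (\ref{Cond3}) holds, and so the solution of (\ref{LVLinear}) with initial datum $u_\tau^2$ is unique (alternatively one could appeal to standard linear parabolic theory directly). The main technical obstacle is just the mismatch of growth exponents between $f_1$ (cubic on $\mathbb{R}_+^3$) and $f_2$ (linear), which is why the mixed-growth Theorem \ref{ComparisonWeak3Positive} was proved earlier; once that machinery is in place, no further computation is required.
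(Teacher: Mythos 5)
Your proposal is correct and follows essentially the same route as the paper: verify (\ref{Cond10})--(\ref{Cond11}), the growth/dissipativity conditions (\ref{Cond1})--(\ref{Cond2}) for $f_1$ on $\mathbb{R}_+^3$ and (\ref{Cond1b})--(\ref{Cond2b}) for the linear $f_2$, the ordering (\ref{Ineqfh}) on $\mathbb{R}_+^3$, and the trivial validity of (\ref{Cond4}) for the decoupled $f_2$, then apply Theorem \ref{ComparisonWeak3Positive} together with Remark \ref{Neumann}, with uniqueness of $u_2$ coming from (\ref{Cond3}) for the linear system. No gaps.
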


\bigskip

Let us consider now the autonomous case, that is, $a_{i}\left( t\right)
\equiv a_{i}>0$ and $\tau =0$, with Diriclet boundary conditions.

By the changes of variable $v^{i}\left( t\right) =e^{-a_{i}t}u^{i}\left(
t\right) $ system (\ref{LVLinear}) becomes 
\begin{equation}
\left\{ 
\begin{array}{c}
\dfrac{\partial v^{i}}{\partial t}=D_{i}\Delta v^{i},\ i=1,2,3, \\ 
v^{i}|_{x\in \partial \Omega }=0,%
\end{array}%
\right.  \label{LVLinear2}
\end{equation}%
with initial data $v_{0}=u_{0}.$

The operator $A=-D_{i}\Delta $ in the space $L^{2}\left( \Omega \right) $
with domain $D\left( A\right) =H^{2}\left( \Omega \right) \cap
H_{0}^{1}\left( \Omega \right) $ is sectorial \cite{Henry}. Moreover, since
the eigenvalues of $A$ are $0<D_{i}\lambda _{1}\leq D_{i}\lambda _{2}\leq
... $, we have that the minimum eigenvalue is strictly greater than $0$.
Denote by $e^{-At}$ the analytic semigroup generated by the operator $A$.
Then $v\left( t\right) =e^{-At}v_{0}$ is the unique solution of (\ref%
{LVLinear2})\ with $v\left( 0\right) =v_{0}$.

It is well known \cite{Henry} that the operator $A$ generate a scale of
interpolation spaces $X^{\alpha }=D\left( A^{\alpha }\right) $ with the norm 
$\left\Vert v\right\Vert _{\alpha }=\left\Vert A^{\alpha }v\right\Vert
_{L^{2}\left( \Omega \right) }$, $\alpha \geq 0$, where $X^{\alpha }\subset
H^{2\alpha }\left( \Omega \right) $ with continuous embedding. Take $%
0<\delta <D_{1}\lambda _{1}$. Then by Theorem 1.4.3 in \cite{Henry} we obtain%
\begin{equation*}
\left\Vert A^{\alpha }e^{-At}v_{0}^{i}\right\Vert _{L^{2}\left( \Omega
\right) }\leq C_{\alpha }t^{-\alpha }e^{-\delta t}\left\Vert
v_{0}^{i}\right\Vert _{L^{2}\left( \Omega \right) }\text{ for }t>0.
\end{equation*}%
Since this is true for every $\delta <D_{i}\lambda _{1}$ we obtain that%
\begin{eqnarray*}
\left\Vert A^{\alpha }e^{-At}v_{0}^{i}\right\Vert _{L^{2}\left( \Omega
\right) } &\leq &C_{\alpha }t^{-\alpha }e^{-D_{i}\lambda _{1}t}\left\Vert
v_{0}^{i}\right\Vert _{L^{2}\left( \Omega \right) }, \\
\left\Vert e^{-At}v_{0}^{i}\right\Vert _{H^{2\alpha }\left( \Omega \right) }
&\leq &\widetilde{C}_{\alpha }t^{-\alpha }e^{-D_{i}\lambda _{1}t}\left\Vert
v_{0}^{i}\right\Vert _{L^{2}\left( \Omega \right) }\text{ for }t>0.
\end{eqnarray*}%
We note that $H^{2\alpha }\left( \Omega \right) \subset L^{\infty }\left(
\Omega \right) $ with continuous embedding if $\alpha >\frac{3}{4}$. Since
the constants $C_{\alpha }$ are bounded for $\alpha $ in compact sets, we
obtain the existence of $C$ such that%
\begin{equation*}
\left\Vert e^{-At}v_{0}^{i}\right\Vert _{L^{\infty }\left( \Omega \right)
}\leq Ct^{-\frac{3}{4}}e^{-D\lambda _{1}t}\left\Vert v_{0}^{i}\right\Vert
_{L^{2}\left( \Omega \right) }\text{ for }t>0,
\end{equation*}%
where $D=\min \{D_{1},D_{2},D_{3}\}$. Then the unique solution of (\ref%
{LVLinear}) with $u\left( 0\right) =u_{0}$ satisfies%
\begin{equation}
\left\Vert u\left( t\right) \right\Vert _{\left( L^{\infty }\left( \Omega
\right) \right) ^{3}}\leq Ct^{-\frac{3}{4}}e^{\left( a-D\lambda _{1}\right)
t}\left\Vert u_{0}\right\Vert \text{ for }t>0,  \label{EstInf}
\end{equation}%
where $a=\max \{a_{1},a_{2},a_{3}\}$.

Joining (\ref{EstInf}) and Theorem \ref{ComparisonLVLinear} we obtain the
following result.

\begin{theorem}
There exist at least one solution $u\left( t\right) $ of the autonomous
system (\ref{LV}) with $u\left( 0\right) =u_{0}$ and Dirichlet boundary
conditions such that%
\begin{equation*}
\left\Vert u\left( t\right) \right\Vert _{\left( L^{\infty }\left( \Omega
\right) \right) ^{3}}\leq Ct^{-\frac{3}{4}}e^{\left( a-D\lambda _{1}\right)
t}\left\Vert u_{0}\right\Vert \text{ for }t\in (0,T],
\end{equation*}%
where $C>0$ and $D=\min \{D_{1},D_{2},D_{3}\},$ $a=\max
\{a_{1},a_{2},a_{3}\}.$
\end{theorem}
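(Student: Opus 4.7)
The plan is to combine Theorem \ref{ComparisonLVLinear} with the linear estimate (\ref{EstInf}) already established just before the theorem statement. Specifically, I would apply Theorem \ref{ComparisonLVLinear} with $u_{\tau}^{1}=u_{\tau}^{2}=u_{0}$ (trivially $u_{0}\leq u_{0}$) to produce a solution $u_{1}(t)$ of the autonomous Lotka--Volterra system (\ref{LV}) with $u_{1}(0)=u_{0}$ that is dominated componentwise by the unique solution $u_{2}(t)$ of the linear system (\ref{LVLinear}) with $u_{2}(0)=u_{0}$. This is the solution $u(t)$ that the theorem claims exists.

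Next I would note that since $0\leq u_{1}^{i}(t,x)\leq u_{2}^{i}(t,x)$ pointwise a.e.\ in $\Omega$ for each component $i=1,2,3$, we have the pointwise inequality $|u_{1}(t,x)|\leq |u_{2}(t,x)|$ (for instance in the $\ell^{\infty}$ or Euclidean norm on $\mathbb{R}^{3}$), hence
\begin{equation*}
\left\Vert u_{1}(t)\right\Vert _{(L^{\infty}(\Omega))^{3}}\leq \left\Vert u_{2}(t)\right\Vert _{(L^{\infty}(\Omega))^{3}}.
\end{equation*}
The rest of the argument is just to invoke the semigroup estimate (\ref{EstInf}), which was derived for the unique solution of the linear system via the change of variables $v^{i}(t)=e^{-a_{i}t}u^{i}(t)$, the sectoriality of $A=-D_{i}\Delta$ on $L^{2}(\Omega)$ with Dirichlet boundary conditions, the spectral gap $D_{i}\lambda_{1}>0$, and the continuous embedding $H^{2\alpha}(\Omega)\hookrightarrow L^{\infty}(\Omega)$ for $\alpha>3/4$. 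Setting $D=\min\{D_{1},D_{2},D_{3}\}$ and $a=\max\{a_{1},a_{2},a_{3}\}$ yields the claimed bound
\begin{equation*}
\left\Vert u_{2}(t)\right\Vert _{(L^{\infty}(\Omega))^{3}}\leq Ct^{-3/4}e^{(a-D\lambda_{1})t}\left\Vert u_{0}\right\Vert ,\qquad t\in (0,T].
\end{equation*}
Chaining the two inequalities gives the theorem for $u=u_{1}$.

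There is no genuine obstacle here: the nontrivial mathematical work has already been done: (i) the existence of a Lotka--Volterra solution dominated by the linear-system solution, which is Theorem \ref{ComparisonLVLinear}, resting ultimately on the weak comparison principle Theorem \ref{ComparisonWeak3Positive}; and (ii) the sectorial-semigroup estimate (\ref{EstInf}), which is a standard application of Theorem 1.4.3 in \cite{Henry}. The proof is essentially a one-line chaining of these two facts, so the only point requiring care is to verify that the hypotheses of Theorem \ref{ComparisonLVLinear} genuinely apply in the autonomous Dirichlet setting (they do: $a_{i}>0$ constant satisfies the positivity/continuity assumptions, and condition (\ref{Cond4}) is vacuous for the diagonal linear $f_{2}$).
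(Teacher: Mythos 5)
Your proposal is correct and is exactly the paper's argument: the paper's proof is literally the one-line statement ``Joining (\ref{EstInf}) and Theorem \ref{ComparisonLVLinear} we obtain the following result,'' i.e.\ take $u_{\tau}^{1}=u_{\tau}^{2}=u_{0}$, use the comparison theorem to get a Lotka--Volterra solution dominated componentwise by the nonnegative solution of the linear system, and transfer the $L^{\infty}$ bound (\ref{EstInf}) through that pointwise domination. No further comment is needed.
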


We shall obtain also a weak maximum principle for the autonomous
Lotka-Volterra system with Dirichlet boundary conditions.

By the maximum principle for \ the heat equation it is well known (see \cite%
{Brezis}) that for any $t\geq 0$ the unique solution of equation (\ref%
{LVLinear2}) satisfies%
\begin{equation*}
0\leq v^{i}\left( t,x\right) \leq \sup_{\Omega }\ v_{0}^{i}\text{ for a.a. }%
x\in \Omega \text{.}
\end{equation*}%
Hence, the unique solution of equation (\ref{LVLinear}) satisfies%
\begin{equation}
0\leq u^{i}\left( t,x\right) \leq e^{a_{i}t}\sup_{\Omega }\ v_{0}^{i}\text{
for a.a. }x\in \Omega \text{.}  \label{Max}
\end{equation}

By (\ref{Max}) and Theorem \ref{ComparisonLVLinear} we obtain the following
weak maximum principle.

\begin{theorem}
There exist at least one solution $u\left( t\right) $ of the autonomous
system (\ref{LV}) with $u\left( 0\right) =u_{0}$ and Dirichlet boundary
conditions such that for any $t\in \lbrack 0,T],$%
\begin{equation*}
0\leq u^{i}\left( t,x\right) \leq e^{a_{i}t}\sup_{\Omega }\ u_{0}^{i}\text{
for a.a. }x\in \Omega \text{, }i=1,2,3\text{.}
\end{equation*}

In particular, if $u_{0}\in \left( L^{\infty }\left( \Omega \right) \right)
^{3}$, then%
\begin{equation*}
\left\Vert u\left( t\right) \right\Vert _{\left( L^{\infty }\left( \Omega
\right) \right) ^{3}}\leq e^{at}\left\Vert u_{0}\right\Vert _{\left(
L^{\infty }\left( \Omega \right) \right) ^{3}}\text{ for }t\in \lbrack 0,T],
\end{equation*}%
where $a=\max \{a_{1},a_{2},a_{3}\}.$
\end{theorem}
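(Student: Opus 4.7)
The plan is to combine the weak comparison result already proved as Theorem \ref{ComparisonLVLinear} with the pointwise bound (\ref{Max}) available for the uncoupled linear system (\ref{LVLinear}). Since the theorem being proved is already preceded by these two ingredients, the argument reduces to a short chain of inequalities.

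First I would apply Theorem \ref{ComparisonLVLinear} with $u_{\tau}^{1}=u_{\tau}^{2}=u_{0}$, $\tau=0$. The hypothesis $0\leq u_{\tau}^{1}\leq u_{\tau}^{2}$ is trivially satisfied, so it yields a weak solution $u\left(t\right)$ of the autonomous Lotka--Volterra system (\ref{LV}) with $u\left(0\right)=u_{0}$ such that
\begin{equation*}
0\leq u^{i}\left(t,x\right)\leq \widetilde{u}^{i}\left(t,x\right)\text{ for a.a. }x\in \Omega,\ t\in [0,T],
\end{equation*}
where $\widetilde{u}$ denotes the unique solution of the uncoupled linear system (\ref{LVLinear}) with $\widetilde{u}\left(0\right)=u_{0}$.

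Next, the maximum-principle bound (\ref{Max}) (derived component-wise from the scalar heat equation via the change of variable $v^{i}=e^{-a_{i}t}u^{i}$) gives
\begin{equation*}
0\leq \widetilde{u}^{i}\left(t,x\right)\leq e^{a_{i}t}\sup_{\Omega}u_{0}^{i}\text{ for a.a. }x\in \Omega,\ i=1,2,3.
\end{equation*}
Chaining the two inequalities produces the first assertion of the theorem. For the $L^{\infty}$ estimate, I would simply observe that $\sup_{\Omega}u_{0}^{i}\leq \left\Vert u_{0}^{i}\right\Vert _{L^{\infty}\left(\Omega\right)}\leq \left\Vert u_{0}\right\Vert _{\left(L^{\infty}\left(\Omega\right)\right)^{3}}$ and $\max_{i}e^{a_{i}t}=e^{at}$, so that the pointwise bound can be taken to essential supremum over $x$ and maximum over $i$.

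There is essentially no serious obstacle here: the work has been absorbed into Theorem \ref{ComparisonLVLinear} (which furnishes a Lotka--Volterra solution dominated by the corresponding linear one) and into the classical heat-equation maximum principle used to obtain (\ref{Max}). The only minor point to be careful about is the choice $u_{\tau}^{1}=u_{\tau}^{2}$ in the comparison theorem, which is legitimate because Theorem \ref{ComparisonLVLinear} only requires an ordered pair of initial data and produces some Lotka--Volterra solution below $\widetilde{u}$ (uniqueness for (\ref{LV}) is not claimed and is not needed).
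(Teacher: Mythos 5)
Your proposal is correct and follows exactly the route the paper takes: the paper's proof is the one-line observation that the statement follows from (\ref{Max}) combined with Theorem \ref{ComparisonLVLinear}, which is precisely your chain of inequalities (with the diagonal choice $u_{\tau}^{1}=u_{\tau}^{2}=u_{0}$ made implicitly). Nothing is missing.
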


\subsection{A model of fractional-order chemical autocatalysis with decay}

Consider the following scalar problem 
\begin{equation}
\left\{ 
\begin{array}{c}
\dfrac{\partial u}{\partial t}=\dfrac{\partial ^{2}u}{\partial x^{2}}+\left(
1-u\right) u^{m}-ku^{r}, \\ 
\dfrac{\partial u}{\partial x}\left( 0,t\right) =\dfrac{\partial u}{\partial
x}\left( a,t\right) =0, \\ 
u|_{t=0}=u_{0}\left( x\right) ,%
\end{array}%
\right.  \label{autocatalyst}
\end{equation}%
where $u\geq 0$, $N=1$, $d=1,$ $\Omega =\left( 0,L\right) $, and $k>0$, $%
0<m,r<1$. The initial data $u_{0}$ belongs to $L^{2}\left( 0,a\right) $.
This equation models an isothermal chemical autocatalysis (see \cite{McCabe}%
). In \cite{McCabe} the authors study the travelling waves of the equation
in the case where $\Omega =\left( 0,+\infty \right) $ with Neumann boundary
conditions at $x=0$. The variable $u$ is non-negative, since it represents a
chemical concentration.

The funtions $f,h$ are given by $f\left( u\right) =\left( u-1\right)
u^{m}+ku^{r}$, $h\equiv 0$. Clearly, conditions (\ref{Cond1})-(\ref{Cond2})
hold (with$\ p=m+2$) for $u\geq 0$. In this case (\ref{Cond10})-(\ref{Cond11}%
) and (\ref{Cond4}) are trivially satisifed.

We take $f_{1}=f_{2}=f$, and applying Theorem \ref{ComparisonPositiveGeneral}
and Remark \ref{Neumann} obtain the following.

\begin{theorem}
If $0\leq u_{\tau }^{1}\leq u_{\tau }^{2},$ there exist solutions $%
u_{1},u_{2}$ of (\ref{autocatalyst}) with $u_{1}\left( 0\right) =u_{0}^{1},\
u_{2}\left( 0\right) =u_{0}^{2}$ such that $0\leq u_{1}\left( t\right) \leq
u_{2}\left( t\right) $, for all $t\in \lbrack 0,T]$.
\end{theorem}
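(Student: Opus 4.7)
The plan is to verify that the scalar problem (\ref{autocatalyst}) fits the hypotheses of Theorem \ref{ComparisonPositiveGeneral} (with the Neumann variant justified by Remark \ref{Neumann}) and then invoke that theorem with the choice $f_1=f_2=f$, $h_1=h_2\equiv 0$. Because $d=1$, the matrix condition (\ref{Cond10}) is automatic; the cooperativity assumption (\ref{Cond4}) is also automatic in the scalar case (as already pointed out in the remark following Theorem \ref{Comparison}); and the monotonicity requirement (\ref{Ineqfh}) holds trivially since $f_1\equiv f_2$ and $h_1\equiv h_2$.

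The substantive step is checking the growth and dissipativity conditions (\ref{Cond1})--(\ref{Cond2}) for $u\geq 0$ with exponent $p=m+2$. For the upper bound, since $0<m,r<1$, I would estimate
\begin{equation*}
|f(u)|\leq u^{m+1}+u^{m}+ku^{r}\leq C_{1}(1+u^{m+1}),
\end{equation*}
and raising to the power $\tfrac{m+2}{m+1}$ yields (\ref{Cond1}). For the lower bound,
\begin{equation*}
(f(u),u)=u^{m+2}-u^{m+1}+ku^{r+1},
\end{equation*}
and a standard application of Young's inequality to the middle term $u^{m+1}=u^{m+1}\cdot 1$ absorbs it into $\tfrac{1}{2}u^{m+2}$ up to an additive constant, while $ku^{r+1}\geq 0$ for $u\geq 0$; this delivers (\ref{Cond2}) with $\alpha=\tfrac{1}{2}$ and some $C_{2}>0$.

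Next I would verify (\ref{Cond11}): in the scalar case it reduces to $h(t,x)-f(t,0)\geq 0$, and here $h\equiv 0$ and $f(0)=(0-1)\cdot 0^{m}+k\cdot 0^{r}=0$, so the inequality holds with equality. At this point all assumptions of Theorem \ref{ComparisonPositiveGeneral} are in force for the Dirichlet problem, and Remark \ref{Neumann} transfers the conclusion to the Neumann boundary conditions present in (\ref{autocatalyst}). The theorem then produces two non-negative weak solutions $u_{1},u_{2}$ with $u_{j}(0)=u_{0}^{j}$ satisfying $0\leq u_{1}(t)\leq u_{2}(t)$ for every $t\in[0,T]$.

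The only genuinely delicate point in this verification is the definition of $f$ at $u=0$: the exponents $m,r\in(0,1)$ make $u\mapsto u^{m}$ and $u\mapsto u^{r}$ merely Hölder continuous at the origin, not differentiable, so condition (\ref{Cond3}) fails and one really does need the weak-comparison machinery of Theorem \ref{ComparisonPositiveGeneral} rather than the stronger Theorem \ref{Comparison}. Everything else is a direct bookkeeping exercise, so I do not anticipate any additional obstacle; the proof is essentially a one-line invocation once the growth bounds are written down.
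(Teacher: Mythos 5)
Your proposal is correct and follows exactly the paper's route: the paper likewise verifies (\ref{Cond1})--(\ref{Cond2}) with $p=m+2$ for $u\geq 0$, notes that (\ref{Cond10})--(\ref{Cond11}) and (\ref{Cond4}) are trivial in the scalar case, takes $f_{1}=f_{2}=f$, and invokes Theorem \ref{ComparisonPositiveGeneral} together with Remark \ref{Neumann}. Your explicit Young's-inequality bookkeeping and the observation that the H\"older (non-$C^{1}$) behaviour at $u=0$ is precisely what forces the weak-comparison machinery are both accurate elaborations of what the paper leaves implicit.
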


\bigskip

\subsection{A generalized logistic equation}

Consider the following scalar problem%
\begin{equation}
\left\{ 
\begin{array}{c}
\dfrac{\partial u}{\partial t}=\dfrac{\partial ^{2}u}{\partial x^{2}}+\left(
1-u^{q}\right) u^{r}, \\ 
\dfrac{\partial u}{\partial x}\left( 0,t\right) =\dfrac{\partial u}{\partial
x}\left( a,t\right) =0, \\ 
u|_{t=0}=u_{0}\left( x\right) ,%
\end{array}%
\right.  \label{Logistic}
\end{equation}%
where $u\geq 0$, $N=1$, $\Omega =\left( 0,L\right) $, $r,q>0$ and $r+q\geq 1$%
. The initial data $u_{0}$ belongs to $L^{2}\left( 0,a\right) $.

This kind of nonlinearities for the logistic equation (instead of the
classical $\left( 1-u\right) u$) has been considered in \cite[Chapter 11]%
{Murray}.

The funtions $f,h$ are given by $f\left( u\right) =\left( u^{q}-1\right)
u^{r}$, $h\equiv 0$. Clearly, conditions (\ref{Cond1})-(\ref{Cond2}) hold
(with $p=r+q+1$) for $u\geq 0$. In this case (\ref{Cond10})-(\ref{Cond11})
and (\ref{Cond4}) are trivially satisifed.

We take $f_{1}=f_{2}=f$, and applying Theorem \ref{ComparisonPositiveGeneral}
and Remark \ref{Neumann} obtain the following.

\begin{theorem}
If $0\leq u_{\tau }^{1}\leq u_{\tau }^{2},$ there exist two solution $%
u_{1},u_{2}$ of (\ref{Logistic}) with $u_{1}\left( 0\right) =u_{0}^{1},\
u_{2}\left( 0\right) =u_{0}^{2}$ such that $0\leq u_{1}\left( t\right) \leq
u_{2}\left( t\right) $, for all $t\in \lbrack 0,T]$.
\end{theorem}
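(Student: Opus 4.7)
The plan is to reduce the claim to a direct application of Theorem \ref{ComparisonPositiveGeneral}, invoking Remark \ref{Neumann} to cover the Neumann boundary conditions. I would take $f_1 = f_2 = f$ with $f(u) = (u^q - 1)u^r$ and $h_1 = h_2 \equiv 0$. Because $d=1$, condition (\ref{Cond10}) is automatic, condition (\ref{Cond4}) is trivially satisfied (as remarked earlier in the paper), and the ordering (\ref{Ineqfh}) holds with equality on both sides. The positivity condition (\ref{Cond11}) reduces to $h(t,x) - f(0) = 0 \geq 0$, which is immediate since $r > 0$ forces $f(0) = 0$.

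The only computation of substance is to verify the growth/dissipativity conditions (\ref{Cond1})-(\ref{Cond2}) on $u \geq 0$ with the exponent $p = r + q + 1$. The hypothesis $r + q \geq 1$ gives $p \geq 2$, as demanded by the abstract framework. For (\ref{Cond2}), I would use
\[ f(u)\,u = u^{r+q+1} - u^{r+1}, \]
and, since $r+1 < p$, Young's inequality absorbs the lower-order term, yielding $f(u)u \geq \tfrac{1}{2}u^p - C$ for $u \geq 0$. For (\ref{Cond1}), the bound $|f(u)| \leq u^{r+q} + u^r$ combined with the elementary inequality $u^s \leq 1 + u^p$ for $0 \leq s \leq p$, raised to the power $p/(p-1)$, gives $|f(u)|^{p/(p-1)} \leq C(1 + u^p)$.

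With every hypothesis of Theorem \ref{ComparisonPositiveGeneral} verified, the theorem (together with Remark \ref{Neumann}) delivers solutions $u_1, u_2$ of (\ref{Logistic}) with $u_j(0) = u_0^j$ and $0 \leq u_1(t) \leq u_2(t)$ on $[0,T]$, which is precisely the claim. There is no genuine obstacle here: the scalar setting and the identity $f_1 = f_2$ trivialize every structural condition except polynomial growth, and the restriction $r + q \geq 1$ built into the statement of the equation is exactly what is needed to fit the problem into the $p \geq 2$ framework of Section 2.
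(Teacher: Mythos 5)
Your proposal is correct and follows exactly the paper's route: take $f_{1}=f_{2}=f$, $h\equiv 0$, check (\ref{Cond1})--(\ref{Cond2}) with $p=r+q+1\geq 2$ on $u\geq 0$, note that (\ref{Cond10})--(\ref{Cond11}) and (\ref{Cond4}) are trivial in the scalar case, and apply Theorem \ref{ComparisonPositiveGeneral} together with Remark \ref{Neumann}. The only difference is that you spell out the Young-inequality verification of the growth and dissipativity conditions, which the paper dismisses with ``clearly.''
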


\textbf{Acknowledgments.}

This work was been partially supported by the Ministerio de Ciencia e
Innovaci\'{o}n, Projects MTM2008-00088 and MTM2009-11820, by the Consejer%
\'{\i}a de Innovaci\'{o}n, Ciencia y Empresa (Junta de Andaluc\'{\i}a),
grant P07-FQM-02468, and by the Consejer\'{\i}a de Cultura y Educaci\'{o}n
(Comunidad Aut\'{o}noma de Murcia), grant 08667/PI/08.

\bigskip

\end{document}